\theoremstyle{plain}
\newtheorem{theorem}{Theorem}[section]
\newtheorem{corollary}[theorem]{Corollary}
\newtheorem{lemma}[theorem]{Lemma}
\newtheorem{proposition}[theorem]{Proposition}
\theoremstyle{definition}
\newtheorem{definition}[theorem]{Definition}
\newtheorem{remark}[theorem]{Remark}
\numberwithin{equation}{section} 
\newcommand\blfootnote[1]{%
  \begingroup
  \renewcommand\thefootnote{}\footnote{#1}%
  \addtocounter{footnote}{-1}%
  \endgroup
}
\newcounter{comcount}
\begin{document}
\title[Isoperimetric Inequality on (QD) Manifolds]{Isoperimetric Inequality on Manifolds with Quadratically Decaying Curvature} 
\date{}
\author{Dangyang He}  
\address{Department of Mathematics, Sun Yat-sen University, Guangzhou 510275, Guangdong, P.R. China, Formerly School of Mathematical and Physical Sciences, Macquarie University, NSW 2109, Australia}
\email{hedy28@mail.sysu.edu.cn}

\begin{abstract}
In this paper, we investigate the reverse improvement property of Sobolev inequalities on manifolds with quadratically decaying Ricci curvature. Specifically, we establish conditions under which the uniform decay of the heat kernel implies the validity of an isoperimetric inequality. As an application, we demonstrate the existence of isoperimetric sets in generalized Grushin spaces. Our approach is built on a weak-type Sobolev inequality, gradient estimates on remote balls, and a Hardy-type gluing technique. This method provides new insights into the deep connections between geometric and functional analysis.
\end{abstract}

\maketitle

\tableofcontents

\blfootnote{$\textit{2020 Mathematics Subject Classification.}$ 53C20, 53C17}

\blfootnote{$\textit{Keywords and Phrases.}$ Isoperimetric inequality, Sobolev inequality, Hardy's inequality, Ricci curvature, Grushin spaces.}

\section{Introduction}

The isoperimetric inequality constitutes a fundamental result in geometry and analysis. In its classical planar formulation, it asserts that, among all simple closed curves of prescribed perimeter, the circle encloses the largest possible area. Equivalently, for any planar region of prescribed area, the circle achieves the least perimeter. This principle extends naturally to higher dimensions: among all bodies in $\mathbb{R}^n$ of fixed volume, the ball has the smallest surface area, and conversely, among all bodies with a given surface area, the ball encloses the greatest volume.

Let $M$ be a complete non-compact Riemannian manifold with measure $d\mu = dx = d\textrm{vol}$. Let $n>1$ be some parameter. We say the classical isoperimetric inequality holds on $M$ if
\begin{equation}\tag{$\textrm{ISO}_n$}\label{ISO}
    |\Omega|^{\frac{n-1}{n}} \le  C |\partial \Omega|,
\end{equation}
where $\Omega \subset M$ is any bounded domain with smooth boundary. In the above notation, $|\Omega|$ refers to the volume of the set $\Omega$ and $|\partial \Omega|$ denotes the standard Riemannian surface measure of its boundary. One says $M$ has isoperimetric dimension $n$ if \eqref{ISO} holds for all large set, i.e., $|\Omega|\gtrsim 1$. Note that on $\mathbb{R}^n$, the topological dimension coincides with its isoperimetric dimension. 

Bridged by co-area formula, it is well-known that \eqref{ISO} is equivalent to the Sobolev inequality (see \cite{Herbert}):
\begin{equation}\tag{$\textrm{S}_n^1$}\label{S_n^1}
    \left(\int_M |f|^{\frac{n}{n-1}} dx\right)^{\frac{n-1}{n}} \le C \int_M |\nabla f| dx
\end{equation}
for all $f\in C_c^\infty(M)$. Generally, one studies $L^p\textit{-}$Sobolev inequality
\begin{equation}\tag{$\textrm{S}_n^p$}\label{S_n^p}
    \left(\int_M |f|^{p^*} dx \right)^{1/p^*} \le C \left(\int_M |\nabla f|^p dx \right)^{1/p},
\end{equation}
where $p^*=\frac{np}{n-p}$ and $1<p<n$.

A simple but noteworthy feature of this family of inequalities is that they exhibit a self-improvement property. Specifically, if $\nabla$ is a local gradient, then from \eqref{S_n^p}, one can derive $(\textrm{S}_n^r)$ for any $1\le p<r<n$; see, for example, \cite{CSV}. Moreover, it is possible to extend this property to the cases $p=n$ and $p>n$ by considering Trudinger–Moser type and Gagliardo–Nirenberg type inequalities, respectively; see \cite{Coulhon2}.

However, in this article, our focus shifts to a reverse problem, which we refer to as the \textit{reverse-improvement} problem. Originally posed by Coulhon in \cite{Coulhon_Sobolev}, it asks under what conditions one can derive \eqref{ISO} from $(\textrm{S}_n^2)$. Let $\Delta$ denote the Laplace-Beltrami operator. The background of this problem can be traced to earlier results from \cite{Varo}, \cite{Nash}, and \cite{CKS}, which show that the uniform decay of the heat kernel is equivalent to the Sobolev inequality $(\textrm{S}_n^2)$ for $n>2$ or a Nash-type inequality for $n>1$. Concretely,
\begin{align*}
    \|e^{-t\Delta}\|_{1\to \infty} \le C t^{-n/2} \iff \begin{cases}
        (\textrm{S}_n^2), & n>2,\\
            \|f\|_2^{1+2/n} \le C\|\nabla f\|_2 \|f\|_1^{2/n}, & n>1.
    \end{cases}
\end{align*}
Hence, the \textit{reverse-improvement} problem can be restated as follows: under what conditions does uniform decay of the heat kernel imply the isoperimetric inequality?

From the perspective of curvature, the argument in \cite{Varopoulos_smalltime} shows that this \textit{reverse-improvement} property holds on manifolds with nonnegative Ricci curvature. Furthermore, if the Ricci curvature is bounded below by some constant, the property still holds but only in a local sense; see also \cite{Saloff_Sobolev}. Recall that we say $M$ satisfies volume doubling condition if for any $x\in M$ and $R\ge r>0$, and for some $\mu>0$,
\begin{equation}\tag{$\textrm{D}$}\label{Doubling}
    \frac{V(x,R)}{V(x,r)} \le C \left(\frac{R}{r}\right)^\mu,
\end{equation}
where $B(x,r)$ denotes the geodesic ball centered at $x$ with radius $r$ and $V(x,r) = \mu(B(x,r))$ denotes the volume of the ball. It is also known that \eqref{Doubling} implies the so-called reverse doubling property (see \cite[Lemma~2.10]{GS_2005})
\begin{align}\tag{$\textrm{RD}_\nu$}\label{RD}
    \left(\frac{R}{r}\right)^\nu \le C \frac{V(x,R)}{V(x,r)},
\end{align}
for some $0<\nu\le \mu$. Denote by $\nabla$, the Riemannian gradient operator on $M$. We say $M$ satisfies $L^p\textit{-}$Poincaré inequality $(1\le p<\infty)$ if for every ball $B=B(x,r)$ and $f\in C^{\infty}(B)$,
\begin{align}\tag{$\textrm{P}_p$}\label{Pp}
    \int_{B}|f-f_B|^p dx \le C r^p \int_B |\nabla f|^p dx.
\end{align}
Next, through a purely analytical approach, Coulhon and Saloff-Coste (see \cite{Coulhon-Saloff}) verify that this reverse implication holds when $M$ satisfies \eqref{Doubling} and a so-called pseudo-Poincaré inequality; see also \cite{Saloff_aspect}. In particular, their argument implies
\begin{align}\label{poincare_method}
    \eqref{Doubling} + \eqref{Pp} + (\textrm{S}_n^2) \implies \eqref{S_n^p}.
\end{align}
Our study begins by examining a specific Riemannian manifold: the connected sum of two copies of $\mathbb{R}^n$. Formally, we consider the \textit{reverse-improvement} problem on 
\begin{align*}
    M = \mathbb{R}^n \# \mathbb{R}^n.
\end{align*}
This manifold is obtained by gluing two copies of $\mathbb{R}^n \setminus B(0,1)$ along a smooth compact set; see \cite{GS} for a detailed description. It is known that $M$ satisfies \eqref{Doubling} and $(\textrm{S}_n^2)$ ($n\ge 3$), but \eqref{Pp} does not hold for all $1\le p\le n$. Moreover, $M$ does not need to have non-negative Ricci curvature everywhere. Consequently, neither the curvature method \cite{Varopoulos_smalltime} nor the Poincaré technique \cite{Saloff_aspect} can be employed to this case directly. 

To go further, let us first set some notations. Denote by $e^{-t\Delta}(x,y)$ the Schwartz kernel of the heat semigroup. We use \eqref{DUE} to represent the following on-diagonal upper estimate for the heat kernel: 
\begin{equation}\tag{DUE}\label{DUE}
    e^{-t\Delta}(x,x) \le \frac{C}{V(x,\sqrt{t})}, \quad \forall x\in M.
\end{equation}
By \cite{grigor_heatkernel_Gaussian}, under \eqref{Doubling}, the above on-diagonal bound self-improves to the off-diagonal upper bound
\begin{equation}\tag{UE}\label{UE}
    e^{-t\Delta}(x,y) \le \frac{C}{V(x,\sqrt{t})} e^{-\frac{d(x,y)^2}{ct}}. 
\end{equation}
We also recall the following heat kernel regularity condition:
\begin{align}\tag{$\textrm{G}_p$}\label{G_p}
    \|\sqrt{t} \nabla e^{-t\Delta} \|_{p\to p} \le C,\quad \forall t>0.
\end{align}
From \cite[Theorem~9.1]{BCLS} (and the comments that follow), the authors provide a general method for proving a so-called weak-type Sobolev inequality. Our first result applies their method to the heat semigroup, yielding a criterion for the \textit{reverse-improvement} property.

\begin{proposition}\label{thm_anti_selfimprovement_general}
Let $M$ be a complete non-compact Riemannian manifold. Let $1\le p<2$ and $n>2$. Then, 
\begin{align*}
    (\textrm{S}_n^2) + (\textrm{G}_{p'}) \implies \eqref{S_n^p}.
\end{align*}
\end{proposition}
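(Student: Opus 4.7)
The plan is to first establish the weak-type Sobolev inequality
\[
\|f\|_{L^{p^{*},\infty}}\le C\|\nabla f\|_{p}
\]
by adapting the scheme of \cite[Theorem~9.1]{BCLS} to the heat semigroup, and then upgrade it to the strong inequality \eqref{S_n^p} via the standard Maz'ya truncation.

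First I would extract the two semigroup ingredients. By the Varopoulos--Nash equivalence recalled in the introduction, $(\textrm{S}_n^2)$ is equivalent to $\|e^{-t\Delta}\|_{1\to\infty}\le C t^{-n/2}$, and Riesz--Thorin interpolation with the $L^{q}$-contractivity of the heat semigroup then yields
\[
\|e^{-t\Delta}\|_{p\to p^{*}}\le C t^{-1/2},\qquad \|e^{-t\Delta}\|_{p\to\infty}\le C t^{-n/(2p)}.
\]
On the other hand, $(\textrm{G}_{p'})$ dualizes: since $(\nabla e^{-t\Delta})^{*}=-e^{-t\Delta}\,\mathrm{div}$ and $e^{-t\Delta}$ is self-adjoint, the hypothesis on $L^{p'}$ transfers to vector fields $\omega\in L^{p}$ as $\|e^{-t\Delta}\mathrm{div}\,\omega\|_{p}\le C t^{-1/2}\|\omega\|_{p}$. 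Composing with the previous estimates via $e^{-t\Delta}=e^{-t\Delta/2}e^{-t\Delta/2}$ then produces the two mixed bounds
\[
\|e^{-t\Delta}\mathrm{div}\|_{p\to p^{*}}\le C t^{-1},\qquad \|e^{-t\Delta}\mathrm{div}\|_{p\to\infty}\le C t^{-\frac{1}{2}-\frac{n}{2p}}.
\]

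Next I would exploit the representation
\[
f=-\int_{0}^{\infty}e^{-t\Delta}\mathrm{div}(\nabla f)\,dt,
\]
valid on $C_{c}^{\infty}(M)$ because the heat kernel decay forces $e^{-t\Delta}f\to 0$ at infinity. For a threshold $t_{0}>0$, split the time integral into $(0,t_{0})$ and $(t_{0},\infty)$: integrating the first mixed bound over $(0,t_{0})$ controls the short-time piece in $L^{p}$ by $C\sqrt{t_{0}}\,\|\nabla f\|_{p}$, while integrating the second over $(t_{0},\infty)$ controls the long-time piece in $L^{\infty}$ by $C t_{0}^{(p-n)/(2p)}\,\|\nabla f\|_{p}$, with $p<n$ ensuring tail integrability. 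For each $\lambda>0$ choose $t_{0}$ so that the $L^{\infty}$ piece is at most $\lambda/2$; Chebyshev's inequality on the remaining $L^{p}$ piece then yields
\[
|\{|f|>\lambda\}|\le C\bigl(\|\nabla f\|_{p}/\lambda\bigr)^{p^{*}},
\]
with the algebraic identity $p+p^{2}/(n-p)=p^{*}$ producing precisely the right exponent.

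Finally, the passage from weak to strong is the classical Maz'ya truncation: apply the weak inequality to $f_{k}=(|f|-2^{k})_{+}\wedge 2^{k}$, note that the $|\nabla f_{k}|$ have pairwise disjoint supports contained in $\{2^{k}<|f|<2^{k+1}\}$ with total $L^{p}$-mass at most $\|\nabla f\|_{p}$, and sum the resulting level-set estimates using the elementary bound $\sum_{k}a_{k}^{p^{*}/p}\le(\sum_{k}a_{k})^{p^{*}/p}$, valid since $p^{*}>p$. The main obstacle is the middle step: orchestrating the Marcinkiewicz-type splitting so that the dualized gradient estimate and the $L^{1}\to L^{\infty}$ heat kernel decay combine with exactly the right algebraic exponents. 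Once the representation and the two composed semigroup estimates are in hand, the choice $t_{0}=t_{0}(\lambda)$ recovers the Sobolev scaling $p^{*}=np/(n-p)$ in a single step.
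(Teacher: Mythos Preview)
Your proof is correct and follows essentially the same strategy as the paper: split $f$ via the heat semigroup, dualize $(\textrm{G}_{p'})$ to bound $\|\Delta e^{-s\Delta}f\|_p\lesssim s^{-1/2}\|\nabla f\|_p$, use the heat kernel decay from $(\textrm{S}_n^2)$ to control the complementary piece, and then upgrade weak to strong by truncation. The only cosmetic difference is that the paper estimates the large-time piece via $\|e^{-t\Delta}\|_{q\to\infty}$ with $q=p^*$, obtaining the BCLS-style weak inequality \eqref{eq_weakSobolev} and invoking Proposition~\ref{le_BCLS}, whereas you use the dualized gradient bound a second time to get the cleaner weak-$L^{p^*}$ estimate $\|f\|_{p^*,\infty}\le C\|\nabla f\|_p$ and then run Maz'ya truncation by hand; these are equivalent reorganizations of the same argument.
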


Note that this result generalizes the one from \cite{Varopoulos_smalltime}, because it is well-known that $(\textrm{G}_{\infty})$ holds on manifolds with nonnegative Ricci curvature. Returning to our model case $M = \mathbb{R}^n \# \mathbb{R}^n$,it was showed in \cite[Theorem~1.6 and Example~4, Page 45]{CJKS} that \eqref{G_p} holds on $M$ if and only if $1<p<n$. Consequently, the above result (Proposition~\ref{thm_anti_selfimprovement_general}) implies that \eqref{S_n^p} holds on $M$ for $p\in (n',n)$. There is no doubt that this compactly supported geometric perturbation (the gluing of two manifolds) can influence analytic properties. For instance, unlike the Euclidean setting, the Riesz transform on $M$ is $L^p$-bounded only on a finite interval; see \cite{CCH}. Nevertheless, it is questionable whether this perturbation so drastically affects the geometry as to lose all information about \eqref{S_n^p} for $1\le p\le n'$, given that the perturbation occurs only within a compact region.

To improve the above result, let us elaborate another strategy to consider the \textit{reverse-improvement} problem. The first step is to recall a result from \cite{Varo}: for $n>2$,
\begin{align*}
    (\textrm{S}_n^2) \implies \|\Delta^{-1/2}\|_{p\to p^*} \lesssim 1,\quad \forall 1<p<n,\quad p^* = \frac{np}{n-p}.
\end{align*}
Combine this with the so-called reverse Riesz inequality:
\begin{align}\tag{$\textrm{RR}_p$}\label{RRp}
    \|\Delta^{1/2}f\|_p \le C \|\nabla f\|_p,\quad \forall f\in C_c^\infty(M).
\end{align}
We obtain another criterion for \textit{reverse-improvement} problem:
\begin{align}\label{RR_method}
    (\textrm{S}_n^2) + \eqref{RRp} \implies \eqref{S_n^p}.
\end{align}
Now, it was proved in \cite{H2} that \eqref{RRp} holds on $M$ for all $1<p<\infty$. So, the above criterion indicates that \eqref{S_n^p} holds on $M$ for all $1<p<n$. 
However, first, proving \eqref{RRp} itself could be tricky and second, this criterion cannot yield any information for $p=1$, which is potentially the most significant and interesting case.

To study further in this direction, let us introduce the following notation. Let $o\in M$ be fixed. Denote by $r(x)$, the distance from $x$ to $o$, i.e., $d(x,o)$. In the rest of this note, we say $M$ has quadratically decaying Ricci curvature if for all $x\in M$, and some $\kappa \in \mathbb{R}$,
\begin{equation}\tag{QD}\label{QD}
    \textrm{Ric}_x \ge - \frac{\kappa^2}{\left(1+r(x) \right)^2} g_x,
\end{equation}
where $g$ is the Riemannian metric. Several copies of $\mathbb{R}^n$, manifold with conical ends are two particular examples which satisfy \eqref{QD}.

Suppose $M$ has at most finitely many ends. We say $M$ satisfies $(\textrm{RCE})$ if there exists a constant $\theta\in (0,1)$ such that for all $R\ge 1$ and all $x\in \partial B(o,R)$, there is a continuous path $\gamma:[0,1]\to B(o,R) \setminus B(o,\theta R)$ and a geodesic ray $\tau:[0,\infty)\to M\setminus B(o,R)$ such that
\begin{align*}
    \bullet \gamma(0)=x, \gamma(1)=\tau(0),\quad \bullet \textrm{length}(\gamma)\le R/\theta.
\end{align*}
This $(\textrm{RCE})$ condition is a generalization of the so-called $(\textrm{RCA})$ condition introduced in \cite{GS_2005} to manifolds with finitely many ends. For more details about $(\textrm{RCE})$, see \cite{GS_2005,Gilles}.

For any $R\ge 1$ and any $x\in \partial B(o,R)$, we say $M$ satisfies volume comparison condition if 
\begin{equation}\tag{VC}\label{VC}
    V(o,R) \le C V(x,R/2).
\end{equation}
It was verified by Carron \cite[Theorem~2.4]{Gilles} and \cite{Gri_heatkernel} that 
\begin{align*}
    \eqref{QD} + \eqref{VC} + (\textrm{RCE}) \implies \eqref{Doubling} + \eqref{DUE}.
\end{align*}

Recall the following version Hardy-type inequality
\begin{align}\tag{$\textrm{H}_p$}\label{Hp}
    \int_M \left|\frac{f(x)}{r(x)}\right|^p dx \le C \int_M |\nabla f(x)|^p dx,\quad \forall f\in C_c^\infty(M).
\end{align}

Our next result appears in the following way.

\begin{theorem}\label{ISO_QD}
Let $M$ be a complete non-comapct Riemannian manifold satisfying \eqref{VC}, \eqref{QD}, $(\textrm{RCE})$. Then for $1\le p < n$ ($n>2$),
\begin{align*}
    (\textrm{S}_n^2) + \eqref{Hp} \implies \eqref{S_n^p}.
\end{align*}

\end{theorem}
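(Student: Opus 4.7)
The plan is to build the argument around the three ingredients advertised in the abstract---a weak-type Sobolev inequality, gradient estimates on remote balls, and a Hardy-type gluing---in that order. The hypotheses \eqref{VC}, \eqref{QD} and $(\textrm{RCE})$ already yield \eqref{Doubling} and \eqref{DUE} by the cited theorem of Carron, and \eqref{Doubling} upgrades \eqref{DUE} to the full Gaussian bound \eqref{UE}. The entry point is thus to apply the BCLS machinery of \cite[Theorem~9.1]{BCLS}, the same tool behind Proposition~\ref{thm_anti_selfimprovement_general}, to extract a weak-type Sobolev inequality of the form $\|f\|_{p^*,\infty}\le C\|\nabla f\|_p$. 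The obstruction in Proposition~\ref{thm_anti_selfimprovement_general} is the global regularity hypothesis $(\textrm{G}_{p'})$, which on the model $\mathbb{R}^n\#\mathbb{R}^n$ forces $p>n'$; here I would replace $(\textrm{G}_{p'})$ by a localized substitute valid only on balls far from the base point $o$, paid for with \eqref{Hp}.

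For the second ingredient, on any ball $B(x,r(x)/2)$ the quadratic decay \eqref{QD} gives a Ricci lower bound of order $-1/r(x)^2$, and standard Li--Yau/Bakry--\'Emery gradient estimates combined with \eqref{UE} then yield, for $\sqrt{t}\le c\,r(x)$,
\begin{equation*}
|\nabla_x e^{-t\Delta}(x,y)|\le \frac{C}{\sqrt{t}\,V(x,\sqrt{t})}\exp\!\bigl(-d(x,y)^2/ct\bigr),
\end{equation*}
which integrates to a local $L^{p'}\to L^{p'}$ bound for $\sqrt{t}\,\nabla e^{-t\Delta}$ on such remote balls. The third ingredient then glues these estimates to the rest of the manifold: I would introduce a dyadic partition of unity $\{\psi_k\}$ adapted to annuli $B(o,2^{k+1})\setminus B(o,2^{k-1})$, with $|\nabla\psi_k|\lesssim 2^{-k}$. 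On each annulus the remote gradient estimate applies at scale $r(x)\sim 2^k$, and the commutator generated when cutting with $\psi_k$ produces an error of order $|\nabla\psi_k|\,|f|\lesssim |f|/r(x)$, which is exactly what \eqref{Hp} absorbs into $\|\nabla f\|_p$. Summing the pieces and promoting the resulting weak-type inequality to the strong $(\textrm{S}_n^p)$---by real interpolation when $p>1$ and by Maz'ya's truncation argument when $p=1$---delivers the theorem.

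The main obstacle I anticipate is the execution of this gluing. Two regimes for the heat time $t$ must be reconciled: the moderate regime $\sqrt{t}\lesssim r(x)$, where the remote gradient estimate supplies genuine regularity of $\sqrt{t}\,\nabla e^{-t\Delta}$, and the large regime $\sqrt{t}\gtrsim r(x)$, where only \eqref{UE} is available and the cutoff commutator must be balanced directly against \eqref{Hp}. Here \eqref{RD} with $\nu>1$ is the ingredient that makes the large-$t$ integrals converge, so that the Hardy gain $1/r(x)^p$ can defeat the volume growth at infinity. Ensuring that the endpoint $p=1$ survives the whole argument---rather than falling off through some reliance on duality or interpolation---will be the most delicate point, precisely because $(\textrm{S}_n^1)$ is equivalent to the isoperimetric inequality \eqref{ISO} and so sits at the heart of the theorem.
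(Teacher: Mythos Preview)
Your plan is essentially the paper's, with the same three ingredients in the same order: BCLS weak-type machinery, Li--Yau gradient bounds on remote balls, and \eqref{Hp} to absorb the partition-of-unity commutators. Two differences in execution are worth noting.

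First, the paper does not use a fixed dyadic decomposition. It sets $R=(\|f\|_q/\lambda)^{q/n}\simeq\sqrt{t}$ and covers $M$ by a single central ball $B_0=B(o,R)$ together with remote balls $B_\alpha$ outside it (a Vitali refinement of dyadic annuli; Lemma~\ref{lemma_remoteball}). The central piece is then dispatched in one line by Chebyshev and \eqref{Hp}:
\[
\mathcal J_0\lesssim\lambda^{-p}\|f\|_{L^p(B(o,R))}^p\le\lambda^{-p}R^p\|f/r\|_p^p\lesssim\lambda^{-p}R^p\|\nabla f\|_p^p,
\]
with no heat semigroup needed near $o$. This replaces your ``large-$t$ regime'' bookkeeping.

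Second, and this is the point where your proposal is genuinely incomplete, the paper does \emph{not} split the time integral into moderate and large $s$. Its key technical lemma (Lemma~\ref{ISO_QD_Lemma1}) shows
\[
\sup_{\alpha\ge 1}\,\sup_{s>0}\,\bigl\|\sqrt{s}\,\nabla e^{-s\Delta}\bigr\|_{L^\infty(B_\alpha)\to L^\infty(B_\alpha)}\lesssim 1
\]
for \emph{all} $s>0$. For $\sqrt{s}>r_\alpha$ the pointwise bound carries the bad prefactor $r_\alpha^{-1}$ rather than $s^{-1/2}$, but because both the source and the target are confined to $B_\alpha$, the volume ratio $V(x_\alpha,r_\alpha)/V(x,\sqrt{s})$ enters; \eqref{RD} with $\nu>1$ converts this into the missing factor $(\sqrt{s}/r_\alpha)^{-1}$ and restores $s^{-1/2}$. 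Interpolating with the trivial $L^2\to L^2$ bound gives the $L^{p'}(B_\alpha)\to L^{p'}(B_\alpha)$ estimate uniformly in $s$, and the duality step $\|\Delta e^{-s\Delta}f_\alpha\|_p\le\|\nabla f_\alpha\|_p\|\nabla e^{-s\Delta}\|_{p'\to p'}$ goes through exactly as in Proposition~\ref{thm_anti_selfimprovement_general}. So your anticipated ``main obstacle'' dissolves once you localize both sides of the operator to the remote ball; there is nothing separate to balance against \eqref{Hp} in the large-$s$ range.

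Finally, the endpoint $p=1$ requires no special care. The output of the argument is a weak inequality of the shape \eqref{eq_weakSobolev}, and Proposition~\ref{le_BCLS} (which already encodes the truncation) upgrades it to the strong $(\textrm{S}_n^p)$ uniformly for $1\le p<2$; no real interpolation or separate Maz'ya step is needed.
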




Now, by a corollary of Theorem~\ref{ISO_QD} (see Corollary~\ref{Cor_ISO_Rn} later), we eventually verify that \eqref{ISO} (or equivalently ($\textrm{S}_n^1$)) holds on $M = \mathbb{R}^n \# \mathbb{R}^n$. In the above result, the usual Poincaré inequality \eqref{Pp} (see \cite{Coulhon-Saloff, Saloff_aspect}) is replaced by a Hardy-type inequality \eqref{Hp}. The condition \eqref{Hp} can be treated as a 'gluing' tool, which combines isoperimetric estimates for the so-called remote balls together. We mention that the assumption $n>2$ should not be treated too seriously. This assumption could be replaced by the uniform decay of the heat kernel or a Nash-type inequality (see Remark~\ref{remark_n>2} below). The only reason for presenting the theorem in this way is to maintain uniformity in the notation.

\bigskip
Recall that we say $M$ has isoperimetric dimension $n$ if \eqref{ISO} holds on $M$ for all large sets. Usually, this diemnsion $n$ is expected to coincide with the topological dimension, for instance $\mathbb{R}^n$. However, in the general setting, these two dimensions do not need to be the same, and the inequality \eqref{ISO} can be spoiled by very simple perturbations. For instance, one considers the cylinder $\mathbb{R}^n\times \mathbb{S}^{m-n}$ for some $m>n$, where $\mathbb{S}^{m-n}$ is the unit sphere of $\mathbb{R}^{m-n+1}$; see \cite{CGL}. Then the classical \eqref{ISO} fails to hold, and the isoperimetric dimension turns to be $n \ne m$. In other words, the topological dimension may not reflect the 'real' geometric properties on a general Riemannian manifold.

To further study isoperimetric problem, instead of establishing \eqref{ISO}, one can also consider a generalized isoperimetric inequality associated with a function $\Phi$, i.e., the inequality: 
\begin{equation}\tag{$\textrm{ISO}_{\Phi}$}\label{GISO}
    |\Omega|\le C \Phi(|\Omega|) |\partial \Omega|,
\end{equation}
where $\Phi$, defined on $(0,\infty)$, is non-negative and non-decreasing. From \cite{Coulhon-Saloff}, it is well-known that \eqref{GISO} holds on $M$ provided $M$ satisfies \eqref{Doubling} and Poincaré inequality $(\textrm{P}_1)$, where
\begin{equation*}
\Phi(t):= \inf \{r>0; V(r)\ge t\},
\end{equation*}
and $V(r) = \inf_{x\in M} V(x,r)$, is assumed to be strictly positive.

In our next investigation, we may focus on a class of non-doubling connected sum introduced in \cite{GS}, which generalizes the previous model $\mathbb{R}^n \# \mathbb{R}^n$. Recall notations from \cite{GS}. We consider connected sums:
\begin{align}\label{eq_manifold}
    M = (\mathbb{R}^{n_1}\times M_1)\# \dots \#(\mathbb{R}^{n_l}\times M_l) \quad n_*\ge 3 \quad l\ge 2,
\end{align}
where for all $1\le i\le l$, $M_i$ is a compact manifold with $n_i+\textrm{dim}(M_i)=N$ and $n_*=\min_i n_i$. We set $E_i:=\mathbb{R}^{n_i}\times M_i \setminus K_i$, where $K_i\subset \mathbb{R}^{n_i}\times M_i$ is a compact set, and denote the central connection part by $K$. That is
\begin{equation*}
    M = K \cup (\cup_i E_i).
\end{equation*}
Note that $M$ satisfies \eqref{QD}, $(\textrm{RCE})$ but it does not satisfy either \eqref{Doubling} nor \eqref{Pp} ($1\le p\le n_*$). Although \eqref{Doubling} is violated—for instance, if $n_i \ne n_j$—the curvature remains 'flat' outside a compact set. We show that our techniques used in Theorem~\ref{ISO_QD} can be adapted to this further perturbed setting. That is we verify that there exists some function $\Phi$ such that the generalized isoperimetric inequality \eqref{GISO} holds on $M$.

\begin{theorem}\label{thm_ISO_manifoldswithends}
Let $M$ be a manifold with ends defined by \eqref{eq_manifold}. Then \eqref{GISO} holds on $M$ with
\begin{equation*}
    \Phi(t) = \begin{cases}
        C_1 t^{1/N}, & t\le 1,\\
        C_2 t^{1/n_*}, & t\ge 1.
    \end{cases},
\end{equation*}
where $n_* = \min_i n_i$ and $N=n_i + \textrm{dim}(M_i)$.
\end{theorem}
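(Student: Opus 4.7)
The plan is to split the argument according to the scale of $|\Omega|$, establishing a Euclidean-type isoperimetric inequality with exponent $N$ for small sets and an $n_*$-type isoperimetric inequality for large sets; these two regimes correspond exactly to the two branches of $\Phi$, reflecting the fact that $M$ is locally $N$-dimensional (its topological dimension) while its asymptotic geometry along the thinnest end has dimension $n_*$.

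For the small set regime $|\Omega|\le 1$, I would exploit the smooth Riemannian structure of $M$. Since \eqref{QD} gives bounded curvature on any compact region, one can cover $M$ by a locally finite family of balls $\{B(x_\alpha,r_0)\}$ with fixed $r_0$ small enough that each ball is bi-Lipschitz equivalent to a Euclidean ball of dimension $N$. Decomposing $\Omega$ along this cover and applying the classical Euclidean isoperimetric inequality on each small ball yields $|\Omega|^{(N-1)/N}\lesssim |\partial\Omega|$ in the small-scale regime, which matches the first branch of $\Phi$.

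For the large set regime $|\Omega|\ge 1$, the goal is $|\Omega|^{(n_*-1)/n_*}\lesssim |\partial\Omega|$, which I would attack by adapting the method behind Theorem~\ref{ISO_QD}. The manifold satisfies \eqref{QD} and $(\textrm{RCE})$, and from \cite{GS} one obtains an on-diagonal heat kernel bound that decays like $t^{-n_*/2}$ at large times; this plays the role of the large-scale $(\textrm{S}_n^2)$ hypothesis at exponent $n_*$. The remaining ingredients are a Hardy-type inequality $(\textrm{H}_1)$ on $M$, which is available because $n_*\ge 3$, together with the gradient estimates on remote balls that are available in each end $E_i$, since each $E_i$ is "nearly doubling" of dimension $n_i$ at infinity. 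The Hardy weight $1/r(x)$ then serves as the gluing device that combines the per-end isoperimetric information at the common basepoint $o$, exactly as in the proof of Theorem~\ref{ISO_QD}.

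The main obstacle is the failure of global volume doubling, which prevents a verbatim invocation of Theorem~\ref{ISO_QD}. The volume function $V(o,R)$ is governed by $\max_i n_i$, whereas the target isoperimetric profile is dictated by $n_*=\min_i n_i$, and reconciling this asymmetry requires analyzing each end $E_i$ separately as a doubling subregion, running the weak-type Sobolev construction of \cite{BCLS} end by end, and then using the Hardy weight to assemble the local inequalities into a single global statement. Careful bookkeeping of the cross-terms between distinct ends, together with matching constants across the $|\Omega|=1$ transition between the two branches of $\Phi$, will be the most delicate part of the argument.
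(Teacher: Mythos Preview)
Your small-set argument via bounded geometry matches the paper's; it simply invokes \cite[Theorem~7.7]{grigor1999}.

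For large sets your route diverges from the paper's. You propose to run the remote-ball machinery of Theorem~\ref{ISO_QD} end by end and glue via the Hardy inequality $(\textrm{H}_1)$. The paper instead uses a \emph{finite} partition of unity adapted to the ends: $f=\sum_i f\phi_i + f_0$ with $\phi_i$ supported in $E_i$ and $f_0$ supported in a compact neighbourhood $\tilde K$ of the junction. Each piece $f\phi_i$ is treated on the model space $\mathbb{R}^{n_i}\times M_i$, where $(\textrm{G}_\infty)$ is already known (Lemma~\ref{lemma_SSi}), so the heat-semigroup argument of Proposition~\ref{thm_anti_selfimprovement_general} applies directly with $e^{-t\Delta_i}$ and no remote-ball covering is needed at all. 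The gluing step is Lemma~\ref{lemma_ISO_M}: $\|u\|_{L^1(\tilde K)}\lesssim\|\nabla u\|_1$, proved via pointwise decay of $\nabla\Delta^{-1}$ applied to compactly supported data, not via a Hardy weight. The paper even flags this explicitly: ``Unlike Theorem~\ref{ISO_QD}\dots we develop an alternative `gluing' tool.''

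The weak point in your proposal is the assertion that $(\textrm{H}_1)$ holds on $M$ ``because $n_*\ge 3$.'' The Hardy inequalities available in the references you would cite (e.g.\ \cite{DR}) assume doubling and reverse doubling, which $M$ does not satisfy globally. If you try to establish $(\textrm{H}_1)$ by localizing to each end and summing, the cross-term generated at the junction is precisely $\|f\|_{L^1(\tilde K)}$, so you are back to needing something equivalent to Lemma~\ref{lemma_ISO_M}. In other words, your Hardy-based gluing presupposes the paper's key lemma rather than replacing it. Separately, invoking remote balls inside each end is unnecessary overhead once you work with $e^{-t\Delta_i}$, since $(\textrm{G}_\infty)$ already holds on $\mathbb{R}^{n_i}\times M_i$; the paper's finite decomposition is both simpler and sidesteps the non-doubling bookkeeping you anticipate as ``the most delicate part.''
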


It is clear that each end of $M$, defined in \eqref{eq_manifold}, has isoperimetric dimension $n_i$ (see Corollary~\ref{cor_ISO_S} below). However, Theorem~\ref{thm_ISO_manifoldswithends} shows that $M$ itself has isoperimetric dimension $n_*$. Unlike Theorem~\ref{ISO_QD}, where a Hardy-type inequality \eqref{Hp} is assumed, we develop an alternative 'gluing' tool; see Lemma~\ref{lemma_ISO_M} below.

\bigskip

Let $V$ and $P$ denote, respectively, the volume and perimeter measures on $M$. For a subset $\Omega \subset M$ with smooth boundary, the usual isoperimetric problem can be formulated by the following minimization:
\begin{align}\label{eq_mini}
    \min \left( \left\{ P(\Omega); \Omega \in \mathcal{S}\quad \textrm{such that}\quad V(\Omega) = \epsilon  \right\}   \right)
\end{align}
where $\epsilon>0$ is fixed and $\mathcal{S}$ is a given family of sets. A set $\Omega$ that attains the minimum in \eqref{eq_mini} is called an isoperimetric set. Our next goal is to investigate the existence of such sets in Grushin spaces.

Let $\beta \ge 0$. On $\mathbb{R}^2$, the classical Grushin operator is given by $-\partial_x^2 - x^{2\beta} \partial_y^2$; see \cite{Grushin}. For $n\ge 1$, $m\ge 1$, we consider the following quadratic form:
\begin{align*}
    Q(f) = \int_{\mathbb{R}^{n+m}} \nabla_Lf(\xi) \cdot \nabla_L f(\xi) d\xi,
\end{align*}
defined for $f\in C_c^\infty(\mathbb{R}^{n+m})$, where the gradient operator $\nabla_L$ is defined by 
\begin{align}\label{eq_grushin_operator_gradient}
    \nabla_L = \left(\nabla_x, |x|^\beta \nabla_y \right).
\end{align}
Since $Q$ is closable (see \cite[Lemma~2.1]{RS}), the Friedrichs extension guarantees the existence of a unique positive self-adjoint operator $L$ associated with $Q$. This operator is expressed by
\begin{align}\label{eq_grushin_operator}
    L = - \sum_{i=1}^n \partial_{x_i}^2 - |x|^{2\beta} \sum_{i=1}^m \partial_{y_i}^2 = \Delta_x + |x|^{2\beta} \Delta_y,\quad (x,y)\in \mathbb{R}^{n+m}.
\end{align}
We refer to $L$ as the generalized Grushin operator on $\mathbb{R}^{n+m}$ with parameter $\beta$.

In \cite{MM}, when $n=m=1$, the authors prove that the isoperimetric inequality $(\textrm{ISO}_{\mathcal{Q}})$ holds on $\mathbb{R}^2$, where $\mathcal{Q} = n+m(\beta+1)$ is the homogeneous dimension of $L$ and its associated Grushin space. Moreover, they obtain an explicit formula for the isoperimetric sets. For higher dimensions, \cite{FM} shows that if $n=1$ and $m\ge 1$, then $(\textrm{ISO}_{\mathcal{Q}})$ still holds; and if $n\ge 2$, the inequality holds for all $x$-spherically symmetric sets, with a corresponding characterization of the isoperimetric sets provided. Furthermore, for $n\ge1$ and $m\ge 1$, \cite[Proposition~3.1]{RS} establishes that $\left(\textrm{S}_{\mathcal{Q}}^2\right)$ holds on the Grushin spaces. Moreover, by \cite{DS2}, the Poincaré inequality $(\textrm{P}_2)$ also holds. Therefore, by \cite{SZ} and then applying \eqref{poincare_method} (the method of Poincaré inequality), it is evident that $\left(S_{\mathcal{Q}}^p\right)$ holds for $2-\epsilon < p< 2$ for some $\epsilon>0$. In particular, when the parameter $\beta \in \mathbb{N}$, a result from \cite{DS3} shows that the Riesz transform associated with $L$ is bounded on $L^p$ for $1<p<\infty$. Hence, by duality, \eqref{RRp} holds for $1<p<\infty$, and $\left(S_{\mathcal{Q}}^p\right)$ can be extended to all $1<p<2$ using \eqref{RR_method} (the reverse Riesz method).

Consider $\mathbb{R}^{n+m}$ equipped with the metric 
\begin{align}\label{eq_metric}
    g_\xi = dx^2 + |x|^{-2\beta} dy^2, \quad \xi = (x,y)\in \mathbb{R}^n \setminus \{0\} \times \mathbb{R}^m.
\end{align}
By viewing the space as a doubly warped product, i.e., $(0,\infty) \times_{r} \mathbb{S}^{n-1} \times_{r^{-\beta}} \mathbb{R}^m$ with metric: $g = dr^2 + r^2 d\theta^2 + r^{-2\beta}dy^2$, an application of the formula for doubly warped product space shows that such structure attains Ricci lower bound: $\textrm{Ric}_\xi \ge -c(n,m,\beta)/|x|^2$ for all $\xi = (x,y)\in \mathbb{R}^n \setminus \{0\} \times \mathbb{R}^m$. Followed with a local Li–Yau-type gradient estimate or Harnack inequality; see \cite{Gilles}, one derives gradient estimate for the heat kernel outside the hyperplane $\{x=0\}$. Note that for $\xi = (x,y)\in \mathbb{R}^n \setminus \{0\} \times \mathbb{R}^m$, the Grushin operator $L$ can be expressed as the weighted Laplacian $L = \Delta_g + V$, where $\Delta_g$ is the Laplace-Beltrami operator according to the metric \eqref{eq_metric}, and $V$ is a first order drift term. Instead of estimating Ricci curvature, one can verify that the curvature dimension inequality $\textrm{CD}(-c_1/|x|^2, c_2)$ holds on $\mathbb{R}^n \setminus \{0\} \times \mathbb{R}^m$. 

By adapting the methods used in Theorem~\ref{ISO_QD}, we prove the following result.

\begin{theorem}\label{thm_ISO_Grushin}
Let $L$ be the operator defined in \eqref{eq_grushin_operator}, and let $\nabla_{L}$ be its associated gradient as in \eqref{eq_grushin_operator_gradient}. Suppose $n\ge 2$, $m\ge 1$ and $\beta > 0$. Then the Sobolev inequality $(\textrm{S}_{\mathcal{Q}}^p)$ holds on $\mathbb{R}^{n+m}$ for all $1\le p < \mathcal{Q}$, where $\mathcal{Q}:= n+m(\beta+1)$ is the homogeneous dimension of $L$. That is,
\begin{align*}
    \left(\int_{\mathbb{R}^{n+m}} |f(\xi)|^{\frac{\mathcal{Q}p}{\mathcal{Q}-p}} d\xi \right)^{\frac{\mathcal{Q}-p}{\mathcal{Q}}} \le C \int_{\mathbb{R}^{n+m}} |\nabla_{L}f(\xi)|^p d\xi,\quad \forall 1\le p < \mathcal{Q}
\end{align*}
for all $f\in C_c^\infty(\mathbb{R}^{n+m})$.
\end{theorem}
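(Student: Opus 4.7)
My plan is to view the Grushin structure as a Riemannian metric--measure space and then transplant the scheme of Theorem~\ref{ISO_QD}. Equip $M_\circ := (\mathbb{R}^n\setminus\{0\})\times\mathbb{R}^m$ with the doubly warped metric $g$ from \eqref{eq_metric}; the Grushin gradient and distance then coincide with the Riemannian gradient norm $|\nabla_g f|_g$ and the geodesic distance on $M_\circ$. As noted in the introduction, the warped product formula gives $\textrm{Ric}_g\ge -c(n,m,\beta)/|x|^2\,g$, which together with $r(\xi)\ge|x|$ realises the quadratic decay \eqref{QD} in terms of $r(\xi)=d_L(0,\xi)$. Volume doubling and the reverse doubling \eqref{RD} with $\nu\ge n>1$ for Grushin balls under Lebesgue measure are standard, the volume comparison \eqref{VC} follows from the Grushin dilations $(x,y)\mapsto(\lambda x,\lambda^{\beta+1}y)$, and $(\textrm{RCE})$ is trivial since $M_\circ$ has a single end when $n\ge 2$. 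The same assumption $n\ge 2$ forces the singular locus $\{x=0\}$ to have codimension at least two and zero $p$-capacity for $p<\mathcal{Q}$, so $C_c^\infty(\mathbb{R}^{n+m})$ can be approximated, with control on $\int|\nabla_L f|^p d\xi$, by functions supported in $M_\circ$.

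Two further ingredients are needed. The Sobolev inequality $(\textrm{S}_\mathcal{Q}^2)$ is proved in \cite[Proposition~3.1]{RS}. A Hardy-type inequality
\begin{align*}
    \int_{\mathbb{R}^{n+m}} \left|\frac{f(\xi)}{r(\xi)}\right|^p d\xi \le C \int_{\mathbb{R}^{n+m}} |\nabla_L f|^p d\xi, \quad 1\le p<\mathcal{Q},
\end{align*}
with $r$ the Grushin distance from the origin, would follow by an integration-by-parts argument using $|\nabla_L r|=1$ almost everywhere (the eikonal identity for the Grushin distance) and the $1$-homogeneity of $r$ under $(x,y)\mapsto(\lambda x,\lambda^{\beta+1}y)$, in the spirit of D'Ambrosio and Garofalo--Lanconelli; the hypothesis $n\ge 2$ is again what suppresses boundary terms at $\{x=0\}$. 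With all hypotheses in place, I would run the argument of Theorem~\ref{ISO_QD}: couple $(\textrm{S}_\mathcal{Q}^2)$ with Li--Yau-type gradient estimates and Gaussian heat-kernel bounds on \emph{remote} balls, namely those centered at $\xi_0=(x_0,y_0)$ with $|x_0|$ at least a constant multiple of the radius, on which the Ricci lower bound is finite, and glue these local bounds through the Hardy inequality. This yields $(\textrm{S}_\mathcal{Q}^p)$ for $1\le p<2$, while the range $2\le p<\mathcal{Q}$ follows from the self-improvement property of Sobolev inequalities (\cite{CSV}) or, when $\beta\in\mathbb{N}$, from \eqref{RR_method} combined with the Riesz transform bounds of \cite{DS3}.

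The principal obstacle is that Theorem~\ref{ISO_QD} is formulated for a genuine Riemannian manifold whose reference measure is the Riemannian volume $d\mu_g$, whereas here the natural ambient measure is Lebesgue $d\xi=|x|^{m\beta}d\mu_g$; correspondingly the Dirichlet form $f\mapsto\int|\nabla_L f|^2 d\xi$ has generator $L=\Delta_g+V$ with a first-order drift $V=-m\beta\,(x\cdot\nabla_x)/|x|^2$, rather than $\Delta_g$ itself. I would confront this by verifying that every analytic input in the proof of Theorem~\ref{ISO_QD}---the on- and off-diagonal heat kernel bounds, the Li--Yau gradient estimate on remote balls, and the Hardy gluing step---remains valid for $L$ on the metric measure space $(\mathbb{R}^{n+m},d_L,d\xi)$. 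The curvature--dimension inequality $\textrm{CD}(-c_1/|x|^2,c_2)$ noted in the introduction substitutes for the Bakry--Émery bound and absorbs the drift, while on remote balls the drift itself has size $|V|\lesssim 1/|x_0|$ and is dominated by the Riemannian inputs. Once this bookkeeping is carried out, the proof of Theorem~\ref{ISO_QD} transfers essentially line by line and delivers the full range $1\le p<\mathcal{Q}$.
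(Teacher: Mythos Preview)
Your overall strategy is right in spirit---cover by balls on which curvature is controlled, obtain heat-kernel gradient estimates there, and glue via a Hardy inequality---but the reduction to Theorem~\ref{ISO_QD} has a genuine gap. The curvature bound $\textrm{Ric}_g\ge -c/|x|^2$ blows up along the entire hyperplane $\{x=0\}\times\mathbb{R}^m$, not at a single point. Since $r(\xi)=d_L(0,\xi)$ can be arbitrarily large while $|x|$ stays arbitrarily small (take $|y|$ large), the inequality $r(\xi)\gtrsim|x|$ runs the \emph{wrong} way: it gives $-c/|x|^2\le -c/r(\xi)^2$, which does \emph{not} yield \eqref{QD}. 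Thus the hypotheses of Theorem~\ref{ISO_QD} fail for every choice of base point, and the covering of Lemma~\ref{lemma_remoteball}, which is indexed by distance to a point, does not produce balls on which the curvature is uniformly bounded. You actually identify the correct local notion---balls with $|x_0|$ at least a constant times the radius---but that is remoteness relative to a hyperplane, not to a point, and the machinery of Theorem~\ref{ISO_QD} does not apply to it.

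The paper's remedy is to decompose only in the $x$ variable: one covers $\mathbb{R}^n$ (not $\mathbb{R}^{n+m}$) by remote balls $B_n^\alpha$ via Lemma~\ref{lemma_remoteball} and works on the cylinders $B_n^\alpha\times\mathbb{R}^m$, on which the curvature--dimension bound $\textrm{CD}(-c/r_\alpha^2,\cdot)$ is uniform. Correspondingly, the Hardy inequality used for gluing is $\int|f/|x||^p\,d\xi\lesssim\int|\nabla_L f|^p\,d\xi$ for $1\le p<n$ (Lemma~\ref{le_grushin_hardy}, proved by elementary radial integration in $x$), rather than the weaker inequality with $r(\xi)$ in the denominator. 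The key new estimate is Lemma~\ref{ISO_Grushin_Lemma1}: $\sup_\alpha\sup_{s>0}\|\sqrt{s}\,\nabla_L e^{-sL}g\|_{L^\infty(B_n^\alpha\times\mathbb{R}^m)}\lesssim\|g\|_\infty$ for $g$ supported in the cylinder, whose large-time case requires integrating the Gaussian over the unbounded $\mathbb{R}^m$ factor using the explicit Grushin distance from Lemma~\ref{lemma_DS1}. With these adjustments the weak-type argument runs as in Theorem~\ref{ISO_QD} and yields $(\textrm{S}_{\mathcal{Q}}^1)$; the full range then follows by self-improvement.
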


\section{Preliminaries: weak-type Sobolev inequality}
In this section, we introduce one of the key ingredients used in our argument. Let $(E,\mu)$ be a measure space with a non-negative $\sigma$-finite measure $\mu$. Let $\mathcal{F}$ be a class of functions on $E$ and $W$ be a norm or semi-norm on $\mathcal{F}$. We mainly need that $\mathcal{F}$ has property: if $f\in \mathcal{F}$, then $(f-t)^+ \wedge s \in \mathcal{F}$ for all $t,s\ge 0$. Moreover, to avoid integrability problem, one also assumes that $\mathcal{F} \in \cap_p L^p$. For instance, on a Riemannian manifold $M$ with Riemannian measure $\mu$, one can take $\mathcal{F}$ to be the space of Lipschitz functions with compact support. For $k\in \mathbb{Z}$ and $f\in \mathcal{F}$ with $f\ge 0$, we define truncated function
\begin{align*}
    f_k := (f-2^k)^+ \wedge 2^k.
\end{align*}
Let $0<q\le \infty$. We say the functional $W$ satisfies condition $(\textrm{W}_q)$ if 
\begin{align}\tag{$\textrm{W}_q$}\label{eq_Wq}
    \left(\sum_{k\in \mathbb{Z}}W(f_k)^q\right)^{\frac{1}{q}} \le C_q W(f)
\end{align}
for all $f\in \mathcal{F}$. In particular, as also mentioned in \cite[Section~2]{BCLS}, the semi-norm: $W(f) = \left(\int_M |\nabla f|^p d\mu\right)^{\frac{1}{p}}$, where $\nabla$ denote the Riemannian gradient, satisfies \eqref{eq_Wq} for all $q\in [p,\infty]$. Introduce condition:

\begin{align}\tag{$\textrm{S}_{r_0,s_0}^{*,\theta_0}$}\label{eq_weakSobolev}
    \sup_{\lambda>0} \left( \lambda \mu \left(\{|f|\ge \lambda\}\right)^{\frac{1}{r_0}} \right) \le C W(f)^{\theta_0} \left(\|f\|_\infty \left[\mu(\textrm{supp}(f))\right]^{\frac{1}{s_0}}\right)^{1-\theta_0}.
\end{align}
The following is one of the main results in \cite{BCLS}.
 
\begin{proposition}\cite[Theorem~3.1, Proposition~3.5]{BCLS}\label{le_BCLS}
Assume \eqref{eq_weakSobolev} holds for some $r_0,s_0\in (0,\infty]$ and $\theta_0 \in (0,1]$ and that the parameter $q = q(r_0,s_0,\theta_0)$ defined by
\begin{align*}
    \frac{1}{r_0}  = \frac{\theta_0}{q} + \frac{1-\theta_0}{s_0},
\end{align*}
satisfies $0< q<\infty$. Suppose $W$ satisfies \eqref{eq_Wq}. Then, for any $r,s\in (0,\infty]$, $\theta\in (0,1]$ such that 
\begin{align*}
\frac{1}{r} = \frac{\theta}{q} + \frac{1-\theta}{s},
\end{align*}
we have
\begin{align*}
    \|f\|_r \le C W(f)^\theta \|f\|_s^{1-\theta},
\end{align*}
for all $f\in \mathcal{F}$.
\end{proposition}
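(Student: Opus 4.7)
My plan is to follow the truncation method for passing from a weak-type Sobolev inequality to a strong Gagliardo--Nirenberg inequality. Without loss of generality assume $f \ge 0$ (replacing $f$ by $|f|$, and by routine approximation assuming boundedness and compact support of $f$). For $k \in \mathbb{Z}$ set $f_k := (f-2^k)^+ \wedge 2^k \in \mathcal{F}$ and $b_k := \mu(\{f > 2^k\})$, so that $\{b_k\}_{k \in \mathbb{Z}}$ is non-increasing. The key observations are that $\|f_k\|_\infty \le 2^k$, $\mathrm{supp}(f_k) \subseteq \{f > 2^k\}$, and $\{f_k \ge 2^k\} = \{f \ge 2^{k+1}\}$. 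Applying \eqref{eq_weakSobolev} to $f_k$ at level $\lambda = 2^k$ therefore yields
\begin{align*}
  2^k b_{k+1}^{1/r_0} \;\le\; C\, W(f_k)^{\theta_0}\,\bigl(2^k b_k^{1/s_0}\bigr)^{1-\theta_0},
\end{align*}
and invoking $1/r_0 = \theta_0/q + (1-\theta_0)/s_0$ this rearranges to the \emph{dyadic Gagliardo--Nirenberg bound}
\begin{align*}
  2^k b_{k+1}^{1/q} \;\le\; C\, W(f_k)\,(b_k/b_{k+1})^{(1-\theta_0)/(\theta_0 s_0)}.
\end{align*}

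In the clean endpoint $\theta_0 = 1$ (or $s_0 = \infty$) the ratio factor disappears. Raising to the $q$th power and summing over $k \in \mathbb{Z}$ then gives
\begin{align*}
  \|f\|_q^q \;\sim\; \sum_{k \in \mathbb{Z}} 2^{kq} b_{k+1} \;\le\; C \sum_{k \in \mathbb{Z}} W(f_k)^q \;\le\; C\, W(f)^q,
\end{align*}
where the last inequality is the hypothesis \eqref{eq_Wq} at the exponent $q$. This is the endpoint strong Sobolev inequality $\|f\|_q \le C W(f)$, which is the case $\theta = 1$, $r = q$ of the proposition.

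To reach the full Gagliardo--Nirenberg form $\|f\|_r \le C W(f)^\theta \|f\|_s^{1-\theta}$ with $1/r = \theta/q + (1-\theta)/s$, I would then interpolate by log-convexity of $p \mapsto \log \|f\|_p$: for any admissible triple $(r,s,\theta)$ the exponent $r$ lies between $q$ and $s$ (or equals $q$ when $\theta = 1$), so $\|f\|_r \le \|f\|_q^\theta \|f\|_s^{1-\theta} \le C W(f)^\theta \|f\|_s^{1-\theta}$. The endpoint $s = \infty$ is handled by the analogous convexity bound with $\|f\|_\infty$ in place of the $L^s$ norm, and $\theta = 1$ recovers the plain Sobolev inequality obtained above.

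The real difficulty is the genuinely mixed case $\theta_0 < 1$ with $s_0 < \infty$, where the factor $(b_k/b_{k+1})^{(1-\theta_0)/(\theta_0 s_0)}$ can blow up along a subsequence on which $b_k$ decreases rapidly. The plan is to raise the dyadic Gagliardo--Nirenberg bound to the power $q$ and apply a discrete H\"older inequality in $k$ with exponents $1/\theta_0$ and $1/(1-\theta_0)$, so that one factor reproduces $\sum_k W(f_k)^q \lesssim W(f)^q$ from \eqref{eq_Wq} and the complementary factor reproduces a geometric sum comparable to $\|f\|_s^s$; the monotonicity of $\{b_k\}$ then lets one telescope, or absorb into a neighbouring index, the residual ratio $b_k/b_{k+1}$. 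This bookkeeping, carried out in \cite[Section~3]{BCLS}, is the only technically delicate step and is where the hypothesis \eqref{eq_Wq} at the specific exponent $q = q(r_0, s_0, \theta_0)$ is essential.
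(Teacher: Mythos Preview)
The paper does not give its own proof of this proposition; it is simply quoted from \cite[Theorem~3.1, Proposition~3.5]{BCLS}. Your sketch is in fact the truncation argument of \cite{BCLS}: applying the weak inequality to $f_k = (f-2^k)^+ \wedge 2^k$ at level $\lambda=2^k$, rewriting via $1/r_0 = \theta_0/q + (1-\theta_0)/s_0$, summing in $k$, and invoking \eqref{eq_Wq} is exactly how \cite{BCLS} passes from $(S^{*,\theta_0}_{r_0,s_0})$ to a strong estimate with parameter $q$; the subsequent H\"older interpolation to reach arbitrary $(r,s,\theta)$ with the same $q$ is precisely their Proposition~3.5. So your approach and the referenced proof coincide.

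One comment on the ``genuinely mixed'' case $\theta_0<1$, $s_0<\infty$: your description of the discrete H\"older step (exponents ``$1/\theta_0$ and $1/(1-\theta_0)$'') is not quite how \cite{BCLS} organizes it. Their argument instead compares $a_k := 2^{k r_0} b_k$ against $c_k := 2^{k s_0} b_k$ and exploits the monotonicity of $b_k$ together with the relation between $r_0$, $s_0$, $q$ to telescope; the ratio $(b_k/b_{k+1})$ is controlled not by H\"older alone but by splitting the sum according to whether $b_{k+1}/b_k$ is small or not. Your high-level plan is right, but the bookkeeping you indicate would not close as written.
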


\begin{remark}
In particular, by letting $W(f) = \|\nabla f\|_p$ and taking $\mathcal{F}$ to be the space of Lipschitz functions with compact support, Proposition~\ref{le_BCLS} indicates that \eqref{eq_weakSobolev} implies the Sobolev inequality
\begin{align*}
    \|f\|_q \le C \|\nabla f\|_p,\quad q = \frac{r_0 s_0 \theta_0}{s_0 - r_0 (1-\theta_0)},
\end{align*}
as long as $0<p\le q <\infty$.
\end{remark}

\section{Via heat kernel regularity}\label{section_2}

In this section, we give a formal proof of Proposition~\ref{thm_anti_selfimprovement_general}, which is a criterion of \textit{reverse-improvement} of Sobolev inequality. This result is actually implicitly included in the paper \cite[Theorem~9.1]{BCLS}. In \cite{BCLS}, the authors proved that a certain family of 'weak type' Sobolev inequalities directly implies the strong type estimate. Especially, in \cite[Theorem~9.1]{BCLS} (also see \cite{Coulhon-Saloff}), they gave a general method to prove the so-called 'weak type' Sobolev inequality. Here, we run their argument rigorously with respect to heat semigroup.

\begin{remark}\label{remark_n>2}
We mention that the assumption $n>2$ in the statement is not essential. As can be seen from the proof below, $(\textrm{S}_n^2)$ only contributes to the decay of heat kernel. In the case where $n>1$, one may replace the assumption $(\textrm{S}_n^2)$ by a Nash type inequality:
\begin{align*}
    \|f\|_2^{1+2/n} \le C \|\nabla f\|_2 \|f\|_1^{2/n},
\end{align*}
which is well-known to be equivalent to $\|e^{-t\Delta}\|_{1\to \infty}\le Ct^{-n/2}$, see \cite[Section~2]{CKS} and \cite{Nash}. In short, the implication $(\textrm{G}_{p'})\implies \eqref{S_n^p}$ holds as long as $\|e^{-t\Delta}\|_{1\to \infty}\lesssim t^{-n/2}$ for all $t>0$.
\end{remark}

\begin{proof}[Proof of Proposition~\ref{thm_anti_selfimprovement_general}]
Let $f\in C_c^\infty(M)$ and $\lambda>0$. Set $q=p^*=\frac{np}{n-p}$. By Proposition~\ref{le_BCLS}, one only needs to establish \eqref{eq_weakSobolev}:
\begin{align}\label{SS}
    \sup_{\lambda>0} \lambda \textrm{vol}\left(\{|f|\ge \lambda\}\right)^{\frac{1}{r_0}} \le C \|\nabla f\|_p^{\theta_0} \left(\|f\|_\infty \left[\textrm{vol}(\textrm{supp}(f))\right]^{\frac{1}{s_0}}\right)^{1-\theta_0},
\end{align}
for some $r_0,s_0\in (0,\infty]$, $\theta_0\in (0,1]$ such that
\begin{align*}
    \frac{1}{r_0} = \frac{\theta_0}{q} + \frac{1-\theta_0}{s_0},
\end{align*}
and the strong type estimate \eqref{S_n^p} follows immediately. 

We follow the ideas from \cite[Theorem~9.1]{BCLS} closely. One writes
\begin{equation*}
    \textrm{vol} \left( \{ |f|\ge \lambda \} \right) \le \textrm{vol} \left( \{ |f-e^{-t\Delta}f|\ge \lambda/2 \} \right) + \textrm{vol} \left( \{ |e^{-t\Delta}f|\ge \lambda/2 \} \right).
\end{equation*}
By $(\textrm{S}_n^2)$, one knows that as a consequence of \cite{Varopoulos_smalltime}, $\|e^{-t\Delta}f\|_{\infty}\le C t^{-n/2}\|f\|_1$. It then follows by interpolation that $\|e^{-t\Delta}\|_{q\to \infty}\le C t^{-\frac{n}{2q}}$. Therefore,
\begin{align*}
    \|e^{-t\Delta}f\|_\infty \le C t^{-\frac{n}{2q}} \|f\|_q \le C t^{-\frac{n}{2q}} \textrm{vol}\left(\textrm{supp}(f)\right)^{\frac{1}{q}}\|f\|_\infty.
\end{align*}
Choose
\begin{equation*}
    t = \left(\frac{\textrm{vol}\left(\textrm{supp}(f)\right)^{\frac{1}{q}}\|f\|_\infty}{\lambda/2}\right)^{\frac{2q}{n}}.
\end{equation*}
We obtain
\begin{equation*}
    \textrm{vol} \left( \{ |f|\ge \lambda \} \right) \le \textrm{vol} \left( \{ |f-e^{-t\Delta}f|\ge \lambda/2 \} \right).
\end{equation*}
Note that $f-e^{-t\Delta}f = \int_0^t \Delta e^{-s\Delta}f ds$. One continues with
\begin{align*}
    \textrm{vol} \left( \{ |f|\ge \lambda \} \right) \le C \lambda^{-p} \left[ \int_0^t \|\Delta e^{-s\Delta}f\|_{p} ds \right]^p.
\end{align*}
Observe that by duality, we have for $f,g\in C_c^\infty(M)$,
\begin{align*}
    \|\Delta e^{-s\Delta}f\|_{p} = \sup_{\|g\|_{p'}=1} \langle \Delta e^{-s\Delta}f, g \rangle = \sup_{\|g\|_{p'}=1} \langle \nabla f, \nabla e^{-s\Delta}g \rangle \le \|\nabla f\|_p \|\nabla e^{-s\Delta}\|_{p' \to p'}.
\end{align*}
It is clear that by $(\textrm{G}_{p'})$ that
\begin{align*}
    \textrm{vol} \left( \{ |f|\ge \lambda \} \right) &\le C \lambda^{-p} \left[ \int_0^t \|\nabla f\|_p s^{-1/2}  ds \right]^p
\end{align*}
or equivalently
\begin{align*}
    \lambda \textrm{vol} \left( \{ |f|\ge \lambda \} \right)^{\frac{1}{p}} &\le C\|\nabla f\|_p \int_0^t  s^{-1/2}  ds \\
    &\sim \|\nabla f\|_p t^{1/2}\sim  \|\nabla f\|_p \left(\frac{\textrm{vol}\left(\textrm{supp}(f)\right)^{\frac{1}{q}}\|f\|_\infty}{\lambda}\right)^{q/n}.
\end{align*}
Denoting $\theta = \frac{q}{n}$, one gets
\begin{align*}
    \lambda \textrm{vol} \left( \{ |f|\ge \lambda \} \right)^{\frac{1}{p(1+\theta)}} \lesssim \|\nabla f\|_p^{\frac{1}{1+\theta}} \left[\textrm{vol}\left(\textrm{supp}(f)\right)^{\frac{1}{q}}\|f\|_\infty\right]^{\frac{\theta}{1+\theta}}.
\end{align*}
Finally, by setting $s_0 = q$, $r_0 = p(1+\theta)$ and $\theta_0 = \frac{1}{1+\theta}$, one concludes
\begin{align*}
    \sup_{\lambda>0} \lambda \textrm{vol} \left( \{ |f|\ge \lambda \} \right)^{\frac{1}{r_0}} \lesssim \|\nabla f\|_p^{\theta_0} \left[\textrm{vol}\left(\textrm{supp}(f)\right)^{\frac{1}{s_0}}\|f\|_\infty\right]^{1-\theta_0}
\end{align*}
which is nothing but \eqref{SS} with $r_0,s_0 \in (0,\infty]$ and $\theta_0\in (0,1]$. Now, by Proposition~\ref{le_BCLS}, \eqref{S_n^p} holds on $M$ provided
\begin{equation*}
    \frac{1}{r_0} = \frac{\theta_0}{q} + \frac{1-\theta_0}{s_0} = \frac{1}{q}.
\end{equation*}
Indeed, recall that $\frac{1}{q} = \frac{1}{p} - \frac{1}{n}$. Hence, $1+\theta = \frac{q}{p}$ and
\begin{align*}
    r_0 = p(1+\theta) = q
\end{align*}
as desired.

\end{proof}

\begin{remark}
It is worth mentioning that this \textit{reverse-improvement} property also holds if one assumes the following domination property:
\begin{align}\label{eq_domination}
    |\nabla e^{-t\Delta}f(x)|^2 \le C e^{-t\Delta}\left(|\nabla f|^2\right)(x),\quad \forall f\in C_c^\infty(M),\quad \forall t>0.
\end{align}
Indeed, it has been showed in \cite[Theorem~4.1]{CD} that under the assumption of domination property, 
\begin{align*}
    \|\nabla f\|_p^2 \le C_p \|f\|_p \|\Delta f\|_p,\quad \forall f\in C_c^\infty(M),\quad \forall 1<p<\infty.
\end{align*}
Then it follows by \cite[Proposition~3.6]{Coulhon-Sikora} that this multiplicative inequality is equivalent to \eqref{G_p} for all $p\in (1,\infty)$. As a consequence of Proposition~\ref{thm_anti_selfimprovement_general}, the conjunction of $(\textrm{S}_n^2)$ and \eqref{eq_domination} implies \eqref{S_n^p} for all $1<p<2$.
\end{remark}

The following corollary is immediate by splitting the proof into cases $t\le 1$ and $t\ge 1$.

\begin{corollary}\label{cor_ISO_S}
Let $M$ be a complete non-compact Riemannian manifold such that $(\textrm{G}_\infty)$ holds and
\begin{align*}
    \|e^{-t\Delta}\|_{1\to \infty}\lesssim \begin{cases}
        t^{-m/2}, & 0<t\le 1,\\
        t^{-n/2}, & t\ge 1,
    \end{cases}
\end{align*}
for some $m\ge n > 1$. Then \eqref{GISO} holds on $M$ with
\begin{align*}
    \Phi(t) = \begin{cases}
        c_1 t^{1/m}, & t\le 1,\\
        c_2 t^{1/n}, & t\ge 1.
    \end{cases}
\end{align*}
\end{corollary}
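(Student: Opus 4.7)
The plan is to adapt the proof of Proposition~\ref{thm_anti_selfimprovement_general} to the $p=1$ setting, applied separately in the two time regimes. By Remark~\ref{remark_n>2}, the role of the $L^2$-Sobolev inequality in that argument reduces to pure heat-kernel decay, and $(\textrm{G}_\infty)$ trivially supplies the hypothesis $(\textrm{G}_{p'})$ for $p=1$. The biphasic decay assumed here should therefore produce a biphasic isoperimetric profile matching the target $\Phi$.

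First I would reformulate \eqref{GISO} via the coarea formula: since $\Phi$ is non-decreasing, $|\Omega|\le C\Phi(|\Omega|)|\partial\Omega|$ for all smooth bounded $\Omega$ follows from the classical dimension-$m$ Sobolev inequality $\|f\|_{m/(m-1)}\le C\|\nabla f\|_1$ for $f$ with $|\mathrm{supp}(f)|\le 1$, together with its dimension-$n$ analogue for $|\mathrm{supp}(f)|\ge 1$. To prove each of these I would run the argument of Proposition~\ref{thm_anti_selfimprovement_general} at $p=1$: starting from
\begin{equation*}
\mathrm{vol}(\{|f|\ge\lambda\}) \le \mathrm{vol}(\{|f-e^{-t\Delta}f|\ge\lambda/2\}) + \mathrm{vol}(\{|e^{-t\Delta}f|\ge\lambda/2\}),
\end{equation*}
one chooses $t$ to annihilate the second term using the interpolation bound $\|e^{-t\Delta}\|_{q\to\infty}\lesssim t^{-d/(2q)}$, obtained from the assumed $\|e^{-t\Delta}\|_{1\to\infty}$-decay by Riesz--Thorin, with $d=m$ when the resulting $t\le 1$ and $d=n$ when $t\ge 1$. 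The first term is controlled by Markov's inequality together with the duality estimate $\|\Delta e^{-s\Delta}f\|_1 \le \|\nabla f\|_1\,\|\nabla e^{-s\Delta}\|_{\infty\to\infty}\lesssim s^{-1/2}\|\nabla f\|_1$ furnished by $(\textrm{G}_\infty)$.

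Integrating in $s\in(0,t)$ yields a weak-type Sobolev inequality of the form \eqref{eq_weakSobolev} in each regime, which Proposition~\ref{le_BCLS} promotes to the strong-type $L^1$-Sobolev inequality with the corresponding dimension. The only point requiring care is matching the regimes: one must check that the critical $t$ determined by $|\mathrm{supp}(f)|$, $\|f\|_\infty$ and $\lambda$ lies on the side of $1$ consistent with the support size, so that the appropriate branch of the heat kernel bound is consistently used. Because $\Phi$ itself is defined piecewise at $t=1$, no cross-regime interaction is required, and the two strong-type inequalities assemble directly into \eqref{GISO} via coarea.
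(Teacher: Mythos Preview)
Your core mechanism is exactly the paper's: the heat-semigroup splitting with $(\textrm{G}_\infty)$ and biphasic ultracontractivity, separated according to whether the critical $t$ lands below or above $1$. That is all the paper's one-line proof intends.

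Where your proposal diverges is in routing the argument through \emph{strong-type} Sobolev inequalities (with support restrictions) via Proposition~\ref{le_BCLS}, and then back to $(\textrm{ISO}_\Phi)$ by coarea. This detour is the source of the regime-matching worry you correctly flag, and it is not as harmless as you suggest. Proposition~\ref{le_BCLS} takes \eqref{eq_weakSobolev} as a hypothesis for \emph{all} $\lambda>0$; but for a fixed $f$ with $|\mathrm{supp}(f)|\le 1$, letting $\lambda\to 0$ drives the critical $t$ to $+\infty$, forcing you onto the $t^{-n/2}$ branch, so a uniform weak inequality with the dimension-$m$ exponents is not available. You would have to open up the BCLS truncation argument (where in effect $\lambda\sim\|f_k\|_\infty$, so the ratio stays bounded and $t$ is governed by the support size alone) to rescue this; as a black-box invocation it does not go through.

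The paper avoids this entirely by skipping BCLS and coarea. Since the target is only $(\textrm{ISO}_\Phi)$, it suffices (compare \eqref{ISO_weakestimate} and the proof of Proposition~\ref{lemma_ISO_large}) to prove the weak estimate at $\lambda=1$,
\[
\mathrm{vol}(\{|f|\ge 1\}) \;\lesssim\; \Phi(\|f\|_1)\,\|\nabla f\|_1,
\]
and then set $f=\chi_\Omega$. Using the $1\to\infty$ bound directly, one kills $\|e^{-t\Delta}f\|_\infty$ by choosing $t\sim\|f\|_1^{2/m}$ when $\|f\|_1\lesssim 1$ (so $t\lesssim 1$ and the $t^{-m/2}$ branch applies) and $t\sim\|f\|_1^{2/n}$ when $\|f\|_1\gtrsim 1$ (so $t\gtrsim 1$); then $(\textrm{G}_\infty)$ and the duality step give $\mathrm{vol}(\{|f|\ge 1\})\lesssim t^{1/2}\|\nabla f\|_1$, which is exactly $\Phi(\|f\|_1)\|\nabla f\|_1$. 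Here the critical $t$ depends only on $\|f\|_1$, so the two regimes separate cleanly with no cross-interaction.
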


\begin{remark}
There are multiple ways to prove Corollary~\ref{cor_ISO_S}. For example, one can prove it from the point of view of \cite{Coulhon-Saloff}. We omit details here.
\end{remark}

Next, by a simple argument, Proposition~\ref{thm_anti_selfimprovement_general} guarantees the following result. 

\begin{corollary}\label{cor_QD_general}
Let $M$ be a complete non-compact Riemannian manifold satisfying \eqref{Doubling}, \eqref{DUE}, \eqref{QD} and \eqref{RD} for some $\nu>2$. Then,
\begin{align*}
    (\textrm{S}_n^2) \implies \eqref{S_n^p}
\end{align*}
for all $\nu'<p<2$.
\end{corollary}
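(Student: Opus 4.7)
The plan is to reduce the corollary to Proposition~\ref{thm_anti_selfimprovement_general}: for any fixed $p\in(\nu',2)$, its conjugate exponent $p'$ lies in $(2,\nu)$, so once the gradient heat semigroup bound $(\textrm{G}_{p'})$ is available, Proposition~\ref{thm_anti_selfimprovement_general} yields \eqref{S_n^p} directly. The entire task therefore reduces to establishing $(\textrm{G}_q)$ for every $q\in[2,\nu)$ under the hypotheses of the corollary.

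The case $q=2$ is immediate from spectral theory, via $\|\sqrt{t}\,\nabla e^{-t\Delta}f\|_2 = \sqrt{t}\,\|\Delta^{1/2}e^{-t\Delta}f\|_2 \lesssim \|f\|_2$. For $q\in(2,\nu)$, the first step is to derive a pointwise Gaussian gradient heat kernel estimate
$$
\sqrt{t}\,|\nabla_x e^{-t\Delta}(x,y)| \le \frac{C}{V(x,\sqrt{t})}\, e^{-cd(x,y)^2/t},
$$
valid on the local scale $\sqrt{t}\lesssim 1+r(x)$. Here \eqref{QD} supplies a Ricci lower bound of order $-\kappa^2/(1+r(x))^2$ on the remote ball $B(x,(1+r(x))/2)$, and the Gaussian upper bound \eqref{UE}---granted by Doubling and DUE through \cite{grigor_heatkernel_Gaussian}---allows the parabolic Harnack / Li--Yau machinery to run on this scale, producing the pointwise gradient bound.

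The operator estimate $(\textrm{G}_q)$ is then extracted in two steps. For $t\lesssim(1+r(x))^2$, a Schur / Young-type argument based on the pointwise bound combined with \eqref{Doubling} delivers the required $L^q$ control. For larger $t$, one uses the semigroup identity $\sqrt{t}\,\nabla e^{-t\Delta}=\sqrt{t}\,\nabla e^{-(t/2)\Delta}\circ e^{-(t/2)\Delta}$ to pair the small-time gradient estimate with an $L^q\to L^q$ decay of the heat semigroup, the latter governed by the volume lower bound $V(x,r)\gtrsim r^{\nu}$ supplied by \eqref{RD}. The restriction $q<\nu$ enters precisely here, through the integrability needed to close the large-time estimate. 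The main obstacle is this sharp control of the large-time regime uniformly up to the critical exponent $\nu$; once $(\textrm{G}_q)$ is secured on the full interval $[2,\nu)$, an appeal to Proposition~\ref{thm_anti_selfimprovement_general} completes the proof.
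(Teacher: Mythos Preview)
Your overall plan---reduce to Proposition~\ref{thm_anti_selfimprovement_general} by establishing $(\textrm{G}_{p'})$ for $p'\in(2,\nu)$---is exactly the paper's strategy, and your derivation of the pointwise gradient bound on the scale $\sqrt{t}\lesssim 1+r(x)$ via Li--Yau under \eqref{QD} is correct. The problem is your treatment of the complementary regime.

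The split you propose is in \emph{time}, but the condition $\sqrt{t}\lesssim 1+r(x)$ is \emph{space}-dependent, so for any fixed $t$ there is always a region $\{r(x)\lesssim\sqrt{t}\}$ where your local bound is unavailable; there is no global ``small $t$'' regime. Your remedy---writing $\sqrt{t}\,\nabla e^{-t\Delta}=\sqrt{t}\,\nabla e^{-(t/2)\Delta}\circ e^{-(t/2)\Delta}$---is circular: up to a factor $\sqrt{2}$ you face the same object at time $t/2$, so nothing is gained. If instead you freeze a small $s$ and write $\sqrt{t}\,\nabla e^{-t\Delta}=(\sqrt{t}/\sqrt{s})\,\sqrt{s}\,\nabla e^{-s\Delta}\circ e^{-(t-s)\Delta}$, you now need $e^{-(t-s)\Delta}$ to \emph{decay} on $L^q\to L^q$ like $\sqrt{s/t}$, which it does not: the semigroup is merely contractive on $L^q$. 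Reverse doubling gives $L^1\to L^\infty$ decay, not $L^q\to L^q$ decay, so this does not close.

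The paper handles this by using the \emph{global} pointwise bound (from \cite[Section~3.2, 3.3]{Gilles})
\[
|\nabla_x e^{-t\Delta}(x,y)| \lesssim \Bigl(\tfrac{1}{\sqrt{t}}+\tfrac{1}{r(x)}\Bigr)\frac{1}{V(x,\sqrt{t})}\,e^{-d(x,y)^2/ct},
\]
valid for all $t>0$, and then, via \cite[Proposition~1.10]{ACDH}, reduces $(\textrm{G}_p)$ to the kernel estimate $\|\nabla_x e^{-t\Delta}(\cdot,y)\|_p^p\lesssim t^{-p/2}V(y,\sqrt{t})^{1-p}$. The integral is split in \emph{space} into $\{r(x)\ge\sqrt{t}\}$ (where $t^{-1/2}$ dominates; routine under \eqref{Doubling}) and $\{r(x)\le\sqrt{t}\}$ (where $r(x)^{-1}$ dominates). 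The latter is where the constraint $p<\nu$ actually enters: one must integrate $r(x)^{-p}$ over $B(o,\sqrt t)$, and reverse doubling \eqref{RD} gives $\int_0^{\sqrt{t}} r^{-p}\,dV(o,r)\lesssim t^{-p/2}V(o,\sqrt{t})$ precisely when $p<\nu$. Your proposal has the restriction entering at the right place morally, but the mechanism you describe does not produce it.
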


\begin{proof}[Proof of Corollary~\ref{cor_QD_general}]
By \cite[Proposition~1.10]{ACDH} and Proposition~\ref{thm_anti_selfimprovement_general}, it suffices to check that under the assumptions of Corollary~\ref{cor_QD_general}, we have
\begin{align*}
    \left\| \nabla_x e^{-t\Delta}(\cdot,y) \right\|_p^p \lesssim \frac{1}{t^{p/2} V(y,\sqrt{t})^{p-1}},\quad \forall p<\nu
\end{align*}
for all $t>0$ and $y\in M$. In fact, this property has been mentioned in other articles before. Here, we give a direct proof for the sake of completeness. 

By \cite[Section~3.2, 3.3]{Gilles}, we have
\begin{align*}
    \int_M |\nabla e^{-t\Delta}(x,y)|^p dx \lesssim &\int_{r(x)\le \sqrt{t}} r(x)^{-p} V(x,\sqrt{t})^{-p} e^{-\frac{d(x,y)^2}{ct}} dx \\
    &+ \int_{r(x)\ge \sqrt{t}} t^{-p/2} V(x,\sqrt{t})^{-p} e^{-\frac{d(x,y)^2}{ct}} dx:= I + II.
\end{align*}
For $II$, by \eqref{Doubling} and \cite[Lemma~2.1]{CD2}, we have
\begin{align*}
    II\lesssim t^{-p/2} V(y,\sqrt{t})^{-p} \int_M \left(1+\frac{d(x,y)}{\sqrt{t}}\right)^{p\mu} e^{-\frac{d(x,y)^2}{ct}} dx \lesssim \frac{1}{t^{p/2}V(y,\sqrt{t})^{p-1}}.
\end{align*}
For $I$, we first consider the case where $r(y) \le 2\sqrt{t}$. Note that by again \eqref{Doubling},
\begin{align*}
    I \lesssim V(y,\sqrt{t})^{-p} \int_{r(x)\le \sqrt{t}} r(x)^{-p} e^{-\frac{d(x,y)^2}{c' t}} dx \le V(y,\sqrt{t})^{-p} \int_{r(x)\le \sqrt{t}} r(x)^{-p} dx.
\end{align*}
Introduce the Riemannian-Stieljes measure: $V(r) = V(o,r)$. The above integral equals
\begin{align*}
    \int_0^{\sqrt{t}} r^{-p} dV(r) \lesssim \int_0^{\sqrt{t}} t^{-p/2} \left(\frac{V(\sqrt{t})}{V(r)}\right)^{p/\nu} dV(r) \sim t^{-p/2} V(o,\sqrt{t}),
\end{align*}
where the inequality follows by \eqref{RD}, i.e. $\frac{V(\sqrt{t})}{V(r)}\gtrsim \left(\frac{\sqrt{t}}{r}\right)^\nu$.

Therefore, for $r(y) \le 2\sqrt{t}$,
\begin{align*}
    I\lesssim \frac{V(o,\sqrt{t})}{t^{p/2}V(y,\sqrt{t})^{p}}\lesssim \frac{V(y, r(y)+\sqrt{t})}{t^{p/2}V(y,\sqrt{t})^{p}}\lesssim \frac{1}{t^{p/2}V(y,\sqrt{t})^{p-1}}.
\end{align*}
Next, if $\frac{r(y)}{2}\ge \sqrt{t}\ge r(x)$, then $d(x,y)\ge \frac{r(y)}{2}$. We obtain by \eqref{Doubling}
\begin{align*}
    I&\lesssim V(y,\sqrt{t})^{-p} e^{-\frac{r(y)^2}{c\sqrt{t}}}\int_{r(x)\le \sqrt{t}} r(x)^{-p} dx \\
    &\lesssim \frac{1}{t^{p/2}V(y,\sqrt{t})^{p-1}} \frac{V(y, r(y)+\sqrt{t})}{V(y,\sqrt{t})} e^{-\frac{r(y)^2}{ct}}\\
    &\lesssim \frac{1}{t^{p/2}V(y,\sqrt{t})^{p-1}} \left(1+\frac{r(y)}{\sqrt{t}}\right)^\mu e^{-\frac{r(y)^2}{ct}}\\
    &\lesssim \frac{1}{t^{p/2}V(y,\sqrt{t})^{p-1}} 
\end{align*}
as desired.

\end{proof}

\begin{remark}
It is worth comparing this Corollary~\ref{cor_QD_general} and Theorem~\ref{ISO_QD}
\end{remark}

\section{Manifolds with (QD) curvature}\label{sec4}

In this section, we consider manifolds with quadratically decaying Ricci curvature and verify that under some suitable assumptions, the implication $(\textrm{S}_n^2)\implies \eqref{ISO}$ indeed holds. After that, as a direct consequence, \eqref{ISO} holds on two copies of $\mathbb{R}^n$ for $n\ge 2$.

\subsection{Doubling case: proof of Theorem~\ref{ISO_QD}}

\begin{definition}
Let $o\in M$. We say the ball $B=B(x,r(B))$ is remote if
\begin{align*}
    r(B) \le \frac{r(x)}{2} = \frac{d(x,o)}{2}.
\end{align*}
\end{definition}

We start by recalling a covering argument from \cite[Section~4.3]{Gilles}, see also \cite[Section~2.1]{DR_hardy}. Since some variational properties are needed, we give a proof for the sake of completeness.

\begin{lemma}\label{lemma_remoteball}
Suppose $M$ satisfies the assumptions of Theorem~\ref{ISO_QD}. Let $R>0$. There exists a sequence of balls $\{B_\alpha = B(x_\alpha, r_\alpha)\}_{\alpha\ge 0}$ such that 

$(\romannumeral1)$ $M = \cup_{\alpha\ge 0} B_\alpha$, where $B_0 = B(o,R)$ and $B_\alpha$ ($\alpha\ge 1$) is remote,


$(\romannumeral2)$ $   \sum_\alpha \chi_{B_\alpha}(x) \le c_0$ for all $x\in M$, where $c_0>0$ does not depend on $R$,

$(\romannumeral3)$ for all $\alpha \ne 0$, $2^{-10}r(x_\alpha)\le r_\alpha \le 2^{-9} r(x_\alpha)$.

\end{lemma}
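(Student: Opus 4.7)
The plan is to build a Whitney-type covering relative to the base point $o$. Set $B_0 := B(o,R)$; this handles the central region and is the only ball in the family that is not required to be remote. For the exterior, I would partition $M \setminus B(o,R)$ into the dyadic annuli
\begin{align*}
    A_k := B(o, 2^{k+1}R) \setminus B(o, 2^k R), \qquad k \in \mathbb{Z}_{\ge 0},
\end{align*}
on each of which $r(x) \in [2^k R, 2^{k+1} R]$. Assign the single scale $r_k := 2^{k-9} R$ to every center drawn from $A_k$; then $r_k/r(x) \in [2^{-10}, 2^{-9}]$ for every $x \in A_k$, which is precisely condition $(\romannumeral3)$.

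To produce the centers, I would invoke the standard Vitali $5r$-covering lemma inside each annulus: choose a maximal family $\{B(x_{k,j}, r_k/5)\}_j$ of pairwise disjoint balls with centers $x_{k,j} \in A_k$. Maximality forces $A_k \subset \bigcup_j B(x_{k,j}, r_k)$, so the relabeled collection $\{B_0\} \cup \{B(x_{k,j}, r_k)\}_{k,j}$ delivers the covering required in $(\romannumeral1)$.

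The main point is the uniform overlap bound $c_0$ in $(\romannumeral2)$. Recall that under \eqref{QD}, \eqref{VC} and $(\textrm{RCE})$, Carron's theorem supplies \eqref{Doubling}, so a doubling constant is available depending only on $M$. For a fixed $y\in M$ I would argue in two steps. First, only $O(1)$ annular levels $k$ can contribute a ball containing $y$: if $y \in B(x_{k,j}, r_k)$ with $x_{k,j}\in A_k$, then $r(y)\in [(1-2^{-9})2^k R,\, (1+2^{-9})2^{k+1} R]$, pinning $k$ down to one of at most two or three consecutive values of $\lfloor \log_2(r(y)/R)\rfloor$. Second, for each such $k$ the disjoint balls $B(x_{k,j}, r_k/5)$ with $y\in B(x_{k,j}, r_k)$ all sit inside $B(y, 6r_k/5)$, while \eqref{Doubling} applied at $x_{k,j}$ gives $V(y, r_k) \lesssim V(x_{k,j}, r_k/5)$ for every such $j$; adding the volumes therefore bounds the number of such $j$ by a constant that depends only on the doubling constant. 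Multiplying the two bounds yields the overlap constant $c_0$, visibly independent of $R$.

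The main obstacle is essentially bookkeeping: one must verify simultaneously that the chosen radii $r_k$ meet $(\romannumeral3)$ for every admissible center, that the Vitali selection genuinely covers each annulus, and that the combinatorial counting behind $(\romannumeral2)$ produces constants independent of $R$. No estimate beyond \eqref{Doubling} enters, and the scale-invariance of the doubling constant delivers the $R$-uniformity of $c_0$ for free.
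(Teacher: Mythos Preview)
Your proposal is correct and follows essentially the same route as the paper: dyadic annuli around $o$, a Vitali selection on each annulus at the fixed scale $r_k\simeq 2^{-9}r(x)$, and a two-step overlap count (only $O(1)$ annular levels can touch a given point, and on each level the number of overlapping balls is bounded via \eqref{Doubling}). The only cosmetic differences are the indexing of the annuli and the choice of Vitali dilation constant ($5$ in your sketch versus $8$ in the paper).
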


\begin{proof}[Proof of Lemma~\ref{lemma_remoteball}]
Set $B_0 = B(o,R)$ and $A_N:= B(o, R 2^{N}) \setminus B(o, R 2^{N-1})$ for each $N\ge 1$. Apparently, 
\begin{align*}
    M = B_0 \cup \bigcup_{N\ge 1}A_N,\quad A_N \subset \bigcup_{x\in A_N} B(x, R 2^{N-13}).
\end{align*}
It follows by \cite[Theorem~1.2]{Juha} (or Vitali's covering lemma), that one can find an 
index set $I_N$ and a collection of balls $\{B(x_{N,j},R2^{N-13})\}_{j\in I_N}$ with $x_{N,j}\in A_N$, pairwise disjoint and $A_N \subset \bigcup_{j\in I_N}B(x_{N,j}, R2^{N-10})$. By \eqref{Doubling}, one deduces the finiteness of $\# I_N$, i.e.
\begin{align*}
    \# I_N V(o,R 2^{N+2}) \le \sum_{j \in I_N} V(x_{N,j}, R2^{N+3}) \lesssim \sum_{j \in I_N} V(x_{N,j}, R2^{N-13}) \le V(o,R2^{N+2}),
\end{align*}
where the implicit constant does not depend on $N$. Note that by setting $B_{\alpha} = B(x_{\alpha}, r_\alpha)$ with $x_\alpha = x_{N,j}$, $r_\alpha = R 2^{N-10}$ and relabeling, we construct a sequence of balls $\cup_{\alpha \ge 0}B_\alpha = M$. Moreover, we have for $\alpha \ne 0$ (since $x_\alpha \in A_N$),
\begin{align*}
2^{-10}r(x_\alpha)\le r_\alpha \le 2^{-9} r(x_\alpha),
\end{align*}
and for all $x\in B_\alpha$,
\begin{align*}
    2^8 r_\alpha \le r(x) \le 2^{11}r_\alpha.
\end{align*}
Clearly, this construction guarantees that $B_0 = B(o,R)$ is the only ball in the collection which contains $o$. In addition, for $x\in M \setminus \{o\}$, one sets $J_x = \{\alpha; x\in B_\alpha\}$. Observe that if $x\in A_N$ for some $N\ge 1$ (the case $x\in B_0$ is similar), and $x\in B_\alpha$ for some $\alpha$, then $\alpha$ is either in $I_N$, $I_{N-1}$ or $I_{N+1}$. Therefore, by \eqref{Doubling}, if $x\in A_N$, we have
\begin{align*}
    \# J_x V(x,2^{-1}r(x)) &\le \sum_{\alpha \in J_x} V(x_\alpha, 2^{-1}r(x)+2^{-8}r(x)) \lesssim \sum_{\alpha\in J_x} V(x_\alpha, 2^{-20}r(x))\\
    &\le \sum_{\alpha\in J_x} V(x_\alpha, 2^{-3}r_\alpha) = \sum_{l\in \{-1,0,1\}} \sum_{\alpha \in J_x \cap I_{N+l}} V(x_\alpha, 2^{-3}r_\alpha) \\
    &= \sum_{l\in \{-1,0,1\}} \textrm{vol}\left(\cup_{\alpha \in J_x\cap I_{N+l}}B(x_\alpha, 2^{-3} r_\alpha)\right)\\
    &\le 3V(x, 2^{-1}r(x)).
\end{align*}
since for all $\alpha \in J_x\cap I_{N+l}$, $B(x_\alpha, r_\alpha/8) \subset B(x,9r_\alpha/8) \subset B(x, \frac{9 }{8}2^{-8} r(x)) \subset B(x, 2^{-1}r(x))$. The case $x\in B_0$ is similar. Hence, we conclude that there exists a constant $c_0>0$ which does not depend on $R,N$ such that
\begin{align*}
    \sum_\alpha \chi_{B_\alpha}(x) \le c_0,\quad \forall x\in M.
\end{align*}

\end{proof}

Next, we proceed to prove Theorem~\ref{ISO_QD}. Note that for a \eqref{QD} manifold, although the curvature may not be globally non-negative, the manifold is asymptotically Euclidean at infinity. Therefore, the idea is to decompose the space (and hence the function) according to the behavior of its curvature, so that the heat kernel regularity condition (i.e., $(\textrm{G}_\infty)$) still holds on each component. Finally, we employ a 'gluing' argument (i.e., \eqref{Hp}) to patch the pieces together.

\begin{proof}[Proof of Theorem~\ref{ISO_QD}]
Let $1\le p<2$, $\lambda>0$, $q=p^*=\frac{np}{n-p}$ and $f\in C_c^\infty(M)$. Set $R = \left(\frac{\|f\|_q}{\lambda}\right)^{q/n}$. Consider the covering given in Lemma~\ref{lemma_remoteball}. Let $\mathcal{X}_\alpha$ be a smooth partition of unity subordinated to $B_\alpha$ such that $0\le \mathcal{X}_\alpha \le 1$ and
\begin{align*}
    &(1) \sum_{\alpha\ge 1} \mathcal{X}_{\alpha} + \mathcal{X}_{0} = 1, \quad &&(2)  \|\mathcal{X}_\alpha\|_\infty +  r_\alpha \|\nabla \mathcal{X}_\alpha\|_\infty \lesssim 1,\quad \forall \alpha\ne 0,\\
    &(3) \textrm{supp}(\mathcal{X}_\alpha) \subset B_\alpha, \quad && (4) \textrm{supp}(\mathcal{X}_{0}) \subset B_{0}=B(o,R). 
\end{align*}
By the decomposition, we may write
\begin{equation*}
    f = \sum_{\alpha\ge 1} f \mathcal{X}_\alpha + f\mathcal{X}_{0}:= \sum_{\alpha\ge 1} f_\alpha + f_{0}.
\end{equation*}
Now, since the decomposition has finite overlap, one infers
\begin{align*}
    \textrm{vol} \left(\{M; |f|\ge \lambda\}\right) &\lesssim \sum_{\alpha\ge 1} \textrm{vol} \left(\{B_\alpha; |f_\alpha|\ge c_0^{-1}\lambda\}\right) + \textrm{vol} \left(\{B_{0}; |f_{0}|\ge c_0^{-1}\lambda\}\right):= \sum_{\alpha\ge 1} \mathcal{J}_\alpha + \mathcal{J}_{0}.
\end{align*}
For each $\alpha\ge 1$, we further split for some $t>0$,
\begin{equation*}
    f_\alpha = \left[ f_\alpha - e^{-t\Delta}f_\alpha \right] + e^{-t\Delta}f_\alpha.
\end{equation*}
By \cite{Varo2}, one knows that $(\textrm{S}_n^2)$ implies estimate: $\|e^{-t\Delta}\|_{1\to \infty}\lesssim t^{-n/2}$. Hence, one yields by interpolation
\begin{align*}
   \|e^{-t\Delta}f_\alpha\|_\infty \le C t^{-\frac{n}{2q}} \|f\|_q. 
\end{align*}
Choosing $t = \left(2c_0 C \|f\|_q/\lambda\right)^{2q/n}$, one deduces $\textrm{vol} \left(\{|e^{-t\Delta}f_\alpha|\ge (2c_0)^{-1}\lambda\}\right)= 0$. Therefore,
\begin{align*}
    \mathcal{J}_\alpha \lesssim \textrm{vol}\left(\left\{B_\alpha; |f_\alpha - e^{-t\Delta}f_\alpha| \ge (2c_0)^{-1}\lambda  \right\}\right).
\end{align*}
Write $f_\alpha - e^{t\Delta}f_\alpha = \int_0^t \Delta e^{-s\Delta}f_\alpha ds$. The Minkowski's integral inequality guarantees that
\begin{align}\label{eq_I_alpha}
    \mathcal{J}_\alpha \lesssim \lambda^{-p} \left( \int_0^t \|\Delta e^{-s\Delta} f_\alpha \|_{L^p(B_\alpha)} ds\right)^p.
\end{align}
Let $g\in C_c^\infty(B_\alpha)$ with $\|g\|_{p'} \le 1$. It follows by integration by parts that
\begin{align}\label{eq_dual}
    \int_M \Delta e^{-s\Delta} f_\alpha(x) g(x) d\mu &= \int_{B_\alpha} \nabla f_\alpha(x) \cdot \nabla e^{-s\Delta}g(x) d\mu\\ \nonumber
    &\le \|\nabla f_\alpha\|_p \|\nabla e^{-s\Delta}\|_{L^{p'}(B_\alpha)\to L^{p'}(B_\alpha)} \|g\|_{p'}.
\end{align}
The following observation is crucial.

\begin{lemma}\label{ISO_QD_Lemma1}
Let $M$ be a complete Riemannian manifold satisfying \eqref{VC}, \eqref{QD}, \eqref{RD} for some $\nu \ge 1$. Then for every remote ball $B_\alpha$ ($\alpha \ge 1$) in the above construction, one has
\begin{align*}
    \sup_{\alpha \ge 1} \sup_{s>0} \|\sqrt{s} \nabla e^{-s\Delta} \|_{L^\infty(B_\alpha) \to L^\infty(B_\alpha)} \lesssim 1.
\end{align*}

\end{lemma}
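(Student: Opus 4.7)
The plan is to exploit the fact that on the enlarged remote ball $\widetilde{B}_\alpha := B(x_\alpha, 4 r_\alpha)$ every point $x$ satisfies $r(x)\asymp r(x_\alpha)\asymp r_\alpha$, so that \eqref{QD} furnishes a lower bound $\mathrm{Ric}\ge -c/r_\alpha^{2}$ throughout $\widetilde{B}_\alpha$. Writing $f=f^{+}-f^{-}$ and $u^{\pm}:=e^{-s\Delta}f^{\pm}$, it suffices to prove, for each nonnegative solution $u^{\pm}$ and each $s>0$, the pointwise bound $\sqrt{s}\,|\nabla u^{\pm}|(x)\le C\|f\|_\infty$ for $x\in B_\alpha$, with a constant independent of $\alpha$ and $s$. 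The proof splits into two regimes according to how $\sqrt{s}$ compares with $r_\alpha$.

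\emph{Short-time regime $\sqrt{s}\le r_\alpha$.} Apply the localized Hamilton/Souplet--Zhang-type gradient estimate on $\widetilde{B}_\alpha$. Since $\mathrm{Ric}\ge -c/r_\alpha^{2}$ on a ball of radius $4 r_\alpha$, this yields
\begin{equation*}
\frac{|\nabla u^{\pm}|}{u^{\pm}}(x)\ \le\ C\left(\frac{1}{\sqrt{s}}+\frac{1}{r_\alpha}\right)\Psi(A/u^{\pm}), \qquad x\in B_\alpha,
\end{equation*}
where $A:=\sup_{\widetilde{B}_\alpha\times[s/2,s]}u^{\pm}\le \|f\|_\infty$ and $\Psi$ is such that $t\,\Psi(A/t)\le CA$ for $0<t\le A$ (e.g.\ $\Psi(x)=\sqrt{\log x}$ or $\Psi(x)=1+\log x$). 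Using $1/r_\alpha\le 1/\sqrt{s}$ in this regime gives $|\nabla u^{\pm}|(x)\le C\|f\|_\infty/\sqrt{s}$, i.e.\ $\sqrt{s}\,|\nabla u^{\pm}|(x)\le C\|f\|_\infty$.

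\emph{Long-time regime $\sqrt{s}> r_\alpha$.} The same estimate now yields only $|\nabla u^{\pm}|(x)\le CA/r_\alpha$, which is too weak by a factor $\sqrt{s}/r_\alpha$. The missing decay comes from the heat kernel: under the standing hypotheses of Theorem~\ref{ISO_QD}, Carron's result \cite{Gilles} furnishes \eqref{Doubling} and \eqref{DUE}, hence the off-diagonal Gaussian upper bound $p_t(x,y)\le C/V(x_\alpha,\sqrt{s})$ for $x\in\widetilde{B}_\alpha$, $y\in B_\alpha$ and $t\asymp s$. Integrating $p_t(x,y)f^{\pm}(y)$ over $y\in B_\alpha$ produces
\begin{equation*}
A\ \le\ C\|f\|_\infty\,\frac{V(B_\alpha)}{V(x_\alpha,\sqrt{s})},
\end{equation*}
and \eqref{RD} with $\nu>1$ gives $V(B_\alpha)/V(x_\alpha,\sqrt{s})\le C(r_\alpha/\sqrt{s})^{\nu}$. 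Combining,
\begin{equation*}
\sqrt{s}\,|\nabla u^{\pm}|(x)\ \le\ C\|f\|_\infty\,\frac{\sqrt{s}}{r_\alpha}\left(\frac{r_\alpha}{\sqrt{s}}\right)^{\nu} = C\|f\|_\infty\left(\frac{r_\alpha}{\sqrt{s}}\right)^{\nu-1}\le C\|f\|_\infty,
\end{equation*}
where the final inequality uses $\sqrt{s}>r_\alpha$ together with $\nu>1$.

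The principal obstacle is the long-time step: the local, curvature-based gradient estimate ceases to deliver $1/\sqrt{s}$-decay beyond the curvature scale $r_\alpha$, and must be married to a genuinely global input, namely the heat kernel decay coming from \eqref{DUE} and \eqref{RD}. The strict inequality $\nu>1$ is indispensable for this matching; if $\nu=1$, the bound degenerates and the uniform-in-$s$ conclusion fails. Uniformity in $\alpha$ is automatic, since every intermediate estimate is expressed in the scale-invariant ratio $r_\alpha/\sqrt{s}$, which is at most $1$ in the long-time regime and harmlessly bounded in the short-time one.
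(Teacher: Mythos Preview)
Your proposal is correct and follows essentially the same route as the paper: a local Li--Yau/Souplet--Zhang gradient estimate on the remote ball (where $\mathrm{Ric}\ge -c/r_\alpha^2$), a short/long-time dichotomy at the scale $\sqrt{s}\sim r_\alpha$, and for long times the volume ratio $V(B_\alpha)/V(x_\alpha,\sqrt{s})\lesssim (r_\alpha/\sqrt{s})^\nu$ from \eqref{RD} with $\nu>1$ to absorb the lost factor $\sqrt{s}/r_\alpha$. The only cosmetic difference is packaging: the paper quotes a pointwise bound $|\nabla_x e^{-s\Delta}(x,y)|\lesssim (s^{-1/2}+r(x)^{-1})V(x,\sqrt{s})^{-1}e^{-d(x,y)^2/cs}$ from \cite{DR_hardy,Gilles} and integrates it against $g$, whereas you split $f=f^+-f^-$ and apply the local gradient estimate directly to the positive solutions $e^{-s\Delta}f^\pm$; the computations in the two regimes are otherwise identical.
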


\begin{proof}[Proof of Lemma~\ref{ISO_QD_Lemma1}]
Since $M$ has quadratic decay Ricci curvature, an argument by using Li-Yau's gradient estimate \cite{Li-Yau} (a proof can be found in \cite[Lemma 2.4]{DR_hardy} \cite[Section 3.2, 3.3]{Gilles}) implies
\begin{equation*}
    \left|\nabla e^{-s\Delta}(x,y)\right| \lesssim \left(\frac{1}{\sqrt{s}} + \frac{1}{r(x)}\right) \frac{1}{V(x,\sqrt{s})} e^{-\frac{d(x,y)^2}{cs}}, \quad \forall x,y\in M, \quad \forall s>0.
\end{equation*}
Since for all $x\in B_\alpha$, we have $2^8 r_\alpha \le r(x)\le 2^{11} r_\alpha$. Thus,
\begin{align*}
    \left|\nabla e^{-s\Delta}g(x)\right| &\lesssim \|g\|_\infty  \frac{s^{-1/2} + r(x)^{-1}}{V(x,\sqrt{s})} \int_{B_\alpha} e^{-\frac{d(x,y)^2}{cs}} dy\\
    &\sim \|g\|_\infty \frac{s^{-1/2} + r_\alpha^{-1}}{V(x,\sqrt{s})} \int_{B_\alpha} e^{-\frac{d(x,y)^2}{cs}} dy.
\end{align*}
Next, if $\sqrt{s} \le r_\alpha$, by \cite[Lemma 2.1]{CD2}, one checks that for all $x\in B_\alpha$,
\begin{align}\label{eq_s-1/2,1}
    \left|\nabla e^{-s\Delta}g(x)\right| \lesssim \|g\|_\infty s^{-1/2} V(x,\sqrt{s})^{-1} \int_M e^{-\frac{d(x,y)^2}{cs}} dy
    \lesssim s^{-1/2} \|g\|_\infty.
\end{align}
While for $\sqrt{s} > r_\alpha$, a straightforward computation yields
\begin{align*}
    \left|\nabla e^{-s\Delta}g(x)\right| \lesssim \|g\|_\infty r_\alpha^{-1} V(x,\sqrt{s})^{-1} \int_{B_\alpha} e^{-\frac{d(x,y)^2}{cs}}d\mu(y)
    \lesssim r_\alpha^{-1}\frac{V(x_\alpha, r_\alpha)}{V(x,\sqrt{s})} \|g\|_\infty.
\end{align*}
Note that by \eqref{RD} ($\nu \ge 1$), it is clear to see
\begin{equation*}
    \left(\frac{\sqrt{s}}{r_\alpha}\right)^\nu \lesssim \frac{V(x_\alpha,\sqrt{s})}{V(x_\alpha,r_\alpha)}.
\end{equation*}
This together with \eqref{Doubling} and that $r_\alpha \sim r(x_\alpha) \sim r(x)$ for $x\in B_\alpha$ imply
\begin{align}\label{eq_s-1/2,2}
    s^{-1/2} \left(\frac{\sqrt{s}}{r_\alpha}\right) \frac{V(x_\alpha, r_\alpha)}{V(x,\sqrt{s})} &\lesssim s^{-1/2} \frac{V(x_\alpha,\sqrt{s})^{1/\nu}}{V(x_\alpha,r_\alpha)^{1/\nu}} \frac{V(x_\alpha,  r_\alpha)}{V(x,\sqrt{s})}\\ \nonumber
    &= s^{-1/2} V(x_\alpha,\sqrt{s})^{1/\nu} \frac{V(x_\alpha, r_\alpha)^{\frac{\nu-1}{\nu}}}{V(x,\sqrt{s})}\\ \nonumber
    &\le s^{-1/2} \frac{V(x_\alpha, \sqrt{s})}{V(x,\sqrt{s})} \le s^{-1/2} \frac{V(x, 4\sqrt{s})}{V(x,\sqrt{s})}\\ \nonumber
    &\lesssim s^{-1/2}.
\end{align}
Combining \eqref{eq_s-1/2,1} and \eqref{eq_s-1/2,2}, one gets
\begin{equation*}
    \|\nabla e^{-s\Delta}\|_{L^\infty(B_\alpha)\to L^\infty(B_\alpha)} \lesssim s^{-1/2}.
\end{equation*}
\end{proof}

\begin{remark}
It is worth mentioning that, in the proof of Lemma~\ref{ISO_QD_Lemma1}, the assumption \eqref{RD} can be slightly weakened to 
\begin{align}\label{RDo}\tag{$\textrm{RD}^{o}_\nu$}
    \frac{V(o,R)}{V(o,r)} \gtrsim \left(\frac{R}{r}\right)^\nu, \quad \forall\, R \ge r > 0,
\end{align}
where $\nu \ge 1$. Indeed, since by construction $r_\alpha \sim r(x_\alpha) \sim r(x)$, one easily obtains (cf.~\eqref{eq_s-1/2,2})
\begin{align*}
    s^{-1/2} \left(\frac{\sqrt{s}}{r_\alpha}\right) \frac{V(x_\alpha, r_\alpha)}{V(x,\sqrt{s})} 
    &\lesssim 
    s^{-1/2} \frac{V(o,\sqrt{s})^{1/\nu}}{V(o,r_\alpha)^{1/\nu}} 
    \frac{V(x_\alpha, r_\alpha)}{V(x,\sqrt{s})}\\[4pt]
    &\lesssim 
    s^{-1/2} V(o,\sqrt{s})^{1/\nu} 
    \frac{V(o,r_\alpha)^{\frac{\nu-1}{\nu}}}{V(x,\sqrt{s})}\\[4pt]
    &\le 
    s^{-1/2} \frac{V(o,\sqrt{s})}{V(x,\sqrt{s})} 
    \lesssim 
    s^{-1/2} \frac{V(x,4\sqrt{s})}{V(x,\sqrt{s})} 
    \lesssim 
    s^{-1/2}.
\end{align*}

\end{remark}

\begin{remark}
We note that the assumption \eqref{RD} with $\nu \ge 1$ is, in a sense, critical for the proof of Lemma~\ref{ISO_QD_Lemma1} (see \eqref{eq_s-1/2,2} above). In fact, it is known from \cite[Lemma~2.10]{GS_2005} that, on a non-compact connected manifold, the volume doubling condition \eqref{Doubling} itself implies \eqref{RD} with some $\nu > 0$ depending only on the doubling constant. Now, if $0 < \nu < 1$, then the estimates from \eqref{eq_s-1/2,2} become
\[
    s^{-1/2} \left(\frac{\sqrt{s}}{r_\alpha}\right) \frac{V(x_\alpha, r_\alpha)}{V(x,\sqrt{s})} 
    \lesssim 
    s^{-1/2} \left( \frac{V(x, \sqrt{s})}{V(x, r_\alpha)} \right)^{\frac{1-\nu}{\nu}},
\]
and the coefficient of $s^{-1/2}$ blows up as $s \to \infty$. In other words, when $\nu < 1$, the decay of $\frac{V(x_\alpha, r_\alpha)}{V(x, \sqrt{s})}$ at infinity (i.e., as $s \to \infty$) is not sufficient to kill the blow-up of $\frac{\sqrt{s}}{r_\alpha}$.
\medskip

However, in Theorem~\ref{ISO_QD} we do not assume the reverse doubling condition, and the reverse doubling parameter (the $\nu$ in~\eqref{RD}) does not appear in the conclusion. In fact, Lemma~\ref{ISO_QD_Lemma1} is slightly stronger than what we need. By construction (see Lemma~\ref{lemma_remoteball}), each remote ball $B_\alpha=B(x_\alpha,r_\alpha)$, $\alpha\ge1$, satisfies
\[
r_\alpha \ge R\,2^{-9},\qquad
R:=\left(\frac{\|f\|_q}{\lambda}\right)^{q/n}.
\]
We aim to bound the right-hand side of~\eqref{eq_I_alpha}. Using~\eqref{eq_dual}, it suffices to estimate
\[
\lambda^{-p}\,\|\nabla f_\alpha\|_p^p
\left(\int_0^{t}\bigl\|\nabla e^{-s\Delta}\bigr\|_{L^{p'}(B_\alpha)\to L^{p'}(B_\alpha)}\,ds\right)^{\!p}.
\]
Since
\[
t=\Bigl(\tfrac{2c_0 C\|f\|_q}{\lambda}\Bigr)^{2q/n}=cR^2,
\qquad c:=(2c_0C)^{2q/n},
\]
we have, for $0<s\le t$,
\[
\sqrt{s}\le \sqrt{t}=\sqrt{c}\,R\le 2^{9}\sqrt{c}\,r_\alpha=: \widetilde{C}\,r_\alpha.
\]
Therefore, by~\eqref{eq_s-1/2,1},
\begin{equation}\label{improve}
\bigl\|\sqrt{s}\,\nabla e^{-s\Delta}\bigr\|_{L^\infty(B_\alpha)\to L^\infty(B_\alpha)}
\lesssim 1,\qquad \forall\,\alpha\ge1,\ \forall\,0<s\le t.
\end{equation}
We will use this bound below.

\end{remark}

Next, it follows by interpolating \eqref{improve} (cf. Lemma~\ref{ISO_QD_Lemma1}) with the trivial estimate
\begin{align*}
    \|\sqrt{s}\,\nabla e^{-s\Delta}\|_{L^2(B_\alpha) \to L^2(B_\alpha)} \lesssim 1, \qquad \forall\, s > 0,
\end{align*}
that
\[
    \|\sqrt{s}\,\nabla e^{-s\Delta}\|_{L^{p'}(B_\alpha) \to L^{p'}(B_\alpha)} \lesssim 1,\quad \forall 0<s\le t,\quad \forall 2\le p' \le \infty.
\]
As a consequence, combining \eqref{eq_I_alpha} and \eqref{eq_dual}, we deduce that
\begin{align*}
    \mathcal{J}_\alpha \lesssim \lambda^{-p} t^{\frac{p}{2}} \|\nabla f_\alpha \|_p^p \sim \lambda^{-p} \|\nabla f_\alpha\|_p^p \left(\frac{\|f\|_q}{\lambda}\right)^{\frac{pq}{n}} = \lambda^{-p-pq/n} \|\nabla f_\alpha\|_p^p \|f\|_q^{pq/n}.
\end{align*}
Note that $r_\alpha \sim r(x)$ for all $x \in B_\alpha$. Combining \eqref{Hp} with the finite overlap property (recall that the overlap constant, i.e., $c_0$, is independent of $R$), we obtain
\begin{align}\label{eq_ISO_QD_3}
    \sum_\alpha \mathcal{J}_\alpha &\lesssim \lambda^{-p-pq/n} \|f\|_q^{pq/n} \sum_\alpha \|\nabla f_\alpha\|_p^p\\ \nonumber
    &\lesssim \lambda^{-p-pq/n} \|f\|_q^{pq/n} \left(\sum_\alpha \|\nabla f\|_{L^p(B_\alpha)}^p + \sum_\alpha \left\| \frac{f}{r_\alpha} \right\|_{L^p(B_\alpha)}^p\right)\\ \nonumber
    &\lesssim \lambda^{-p-pq/n} \|f\|_q^{pq/n} \left(\|\nabla f\|_{p}^p + \left\| \frac{f}{r} \right\|_{p}^p\right)\\ \nonumber
    &\lesssim \lambda^{-p-pq/n} \|f\|_q^{pq/n}\|\nabla f\|_{p}^p.
\end{align}
To this end, we also need to handle the term $\mathcal{J}_0$. By \eqref{Hp} and the choice $R = \left(\frac{\|f\|_q}{\lambda}\right)^{q/n}$, we have
\begin{align*}
    \mathcal{J}_{0} \lesssim \lambda^{-p} \|f\|_{L^p(B(o,R))}^p \le \lambda^{-p} R^p \left\|\frac{f}{r}\right\|_{L^p(B(o,R))}^p \lesssim \lambda^{-p} R^p \|\nabla f\|_p^p = \lambda^{-p} \|\nabla f\|_p^p \left(\frac{\|f\|_q}{\lambda}\right)^{pq/n}.
\end{align*}
Combining this with \eqref{eq_ISO_QD_3}, we conclude, for all $\lambda > 0$,
\begin{align*}
    \lambda^{1 + q/n} 
    \, \mathrm{vol}\!\left(\{ x \in M : |f(x)| \ge \lambda \}\right)^{1/p} 
    \lesssim 
    \|\nabla f\|_p \, \|f\|_q^{q/n},
\end{align*}
where the implicit constant is independent of $\lambda$ and $f$.  
The desired result then follows from Proposition~\ref{le_BCLS}.

\end{proof}

Recall that a manifold $M$ is said to have \emph{bounded geometry} if and only if its Ricci curvature is bounded from below and its injectivity radius is strictly positive.  
The following corollary is immediate.

\begin{corollary}\label{cor_ISO_QD_1}
Let $M$ be a complete $n$-dimensional Riemannian manifold with bounded geometry for some $n > 2$.  
Suppose that $M$ satisfies \eqref{VC}, \eqref{QD}, $(\textrm{RCE})$. Then,
\begin{align*}
    (\textrm{S}_n^2) \iff \eqref{ISO}.
\end{align*}
\end{corollary}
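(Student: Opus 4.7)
\medskip
\noindent\textbf{Proof proposal for Corollary~\ref{cor_ISO_QD_1}.} The statement is an equivalence, and the two directions are of very different natures.

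The direction $\eqref{ISO}\Rightarrow(\textrm{S}_n^2)$ is classical and needs nothing beyond completeness and $n>2$. Indeed, the co-area formula gives $\eqref{ISO}\iff(\textrm{S}_n^1)$, and substituting $|f|^{\gamma}$ with $\gamma=2(n-1)/(n-2)$ into $(\textrm{S}_n^1)$ together with H\"older's inequality produces $(\textrm{S}_n^2)$. This is the standard self-improvement mentioned already in the introduction, and the bounded geometry, \eqref{VC}, \eqref{QD}, $(\textrm{RCE})$, and \eqref{RD} hypotheses are not needed here.

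The nontrivial direction is $(\textrm{S}_n^2)\Rightarrow\eqref{ISO}$. The plan is to apply Theorem~\ref{ISO_QD} at the endpoint $p=1$: its output is $(\textrm{S}_n^1)$, which is equivalent to $\eqref{ISO}$. The structural hypotheses \eqref{VC}, \eqref{QD}, $(\textrm{RCE})$, and \eqref{RD} with $\nu>1$ required by Theorem~\ref{ISO_QD} are inherited verbatim from the corollary, so the task reduces to verifying the $L^{1}$-Hardy inequality $(\textrm{H}_{1})$; this is precisely the place where the additional bounded geometry assumption enters.

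To establish $(\textrm{H}_{1})$, the strategy is the following. Bounded geometry (lower Ricci bound plus positive injectivity radius) supplies local $L^{1}$-Poincar\'e (Buser-type) estimates on balls of a fixed scale. Combined with \eqref{QD}, which after rescaling makes the remote balls $B(x,r(x)/2)$ of Lemma~\ref{lemma_remoteball} asymptotically flat, these local estimates become scale-invariant on the whole family of remote balls. A dyadic annular decomposition $M=B(o,1)\cup\bigcup_{k\ge 0}A_{k}$ with $A_{k}=B(o,2^{k+1})\setminus B(o,2^{k})$ then reduces $(\textrm{H}_{1})$ to a telescoping estimate across the annuli: each annulus contributes $\int_{A_{k}}|f|\lesssim 2^{k}\int_{A_{k}}|\nabla f|+(\text{boundary terms})$, and the reverse doubling \eqref{RD} with exponent $\nu>1$ is exactly what forces absolute convergence of the resulting series, yielding $\int_{M}|f|/r\lesssim \int_{M}|\nabla f|$. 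This mirrors the Hardy-inequality arguments of Carron~\cite{Gilles} and Devyver--Russ~\cite{DR_hardy}.

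The main obstacle is the endpoint nature of the required Hardy inequality at $p=1$. For $p>1$ one has functional-analytic tools (duality, semigroup estimates, $L^{p}$-boundedness of the Riesz transform via \eqref{RRp}), but at $p=1$ these devices break down and one is forced into a genuinely geometric argument: sharp local Buser-type Poincar\'e estimates on remote balls together with careful constant tracking when summing across dyadic annuli. The assumption $\nu>1$ is critical for that sum to converge and is exactly the threshold below which the telescoping scheme collapses.
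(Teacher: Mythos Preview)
Your outline for the easy direction is fine. The hard direction, however, misidentifies the role of bounded geometry and leaves a genuine gap.

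You propose to verify the full Hardy inequality $(\textrm{H}_1)$ with singular weight $1/r(x)$ and then invoke Theorem~\ref{ISO_QD} at $p=1$. But your dyadic scheme only treats the annuli $A_k$ outside $B(o,1)$; you never explain how to control $\int_{B(o,1)}|f|/r$, where the weight blows up. Bounded geometry gives local Buser--Poincar\'e estimates on unit-scale balls, but that does not produce a Hardy inequality with a point singularity; nothing in your sketch bridges this gap. Whether the full $(\textrm{H}_1)$ actually holds under these hypotheses is left completely open by your argument.

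The paper avoids this difficulty by using bounded geometry for a different purpose. Via \cite[Theorem~7.7]{grigor1999}, bounded geometry yields the local isoperimetric inequality $|\Omega|^{(n-1)/n}\lesssim|\partial\Omega|$ for small sets $|\Omega|\le 1$. This reduces the problem to large sets, and for those the paper reruns the proof of Theorem~\ref{ISO_QD} with the radius \emph{frozen} at $R=1$ (and $\lambda=p=1$). Because all remote balls in the covering now satisfy $r_\alpha\gtrsim 1$, one can replace $r_\alpha^{-1}$ by $(1+r(x))^{-1}$, and the required inequality becomes the \emph{nonsingular} Hardy inequality
\[
\int_M \frac{|f(x)|}{1+r(x)}\,dx \lesssim \int_M |\nabla f|\,dx,
\]
which is already known from \cite[Theorem~2.3]{DR} under \eqref{VC}, \eqref{QD}, $(\textrm{RCE})$, and \eqref{RD} with $\nu>1$---no bounded geometry needed here. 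So in the paper's proof, bounded geometry handles small sets and the Devyver--Russ Hardy inequality (with the harmless weight $(1+r)^{-1}$) handles large sets; these two ingredients are decoupled, and neither requires proving the singular $(\textrm{H}_1)$ that you were aiming for.
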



\begin{proof}[Proof of Corollary~\ref{cor_ISO_QD_1}]

Since $M$ has bounded geometry, it follows from \cite[Theorem~7.7]{grigor1999} (see also \cite[p.~304, Remark]{Coulhon-Saloff}) that 
\[
    |\Omega|^{\frac{n-1}{n}} \lesssim |\partial \Omega|
\]
for all bounded domains $\Omega \subset M$ with smooth boundary satisfying $|\Omega| \le 1$.  
Therefore, by \cite{Coulhon-Saloff}, it suffices to establish the inequality
\begin{align*}
    \mathrm{vol}\!\left(\{x \in M : |f(x)| \ge 1\}\right) 
    \lesssim 
    \|\nabla f\|_1 \, \|f\|_1^{1/n},
\end{align*}
for all functions $f$ with $\|f\|_1 \ge 1$. The result then follows by letting $f \to \chi_{\Omega}$.

Now, by Lemma~\ref{lemma_remoteball} with $R = 1$, it follows from the same argument as in Theorem~\ref{ISO_QD} that, upon setting $\lambda = p = q = 1$, the estimate 
\[
    \sum_{\alpha \ge 1} \mathcal{J}_\alpha 
    \lesssim 
    \sum_{\alpha \ge 1} \|\nabla f_\alpha\|_1 \, \|f\|_1^{1/n}
\]
still holds.  
Recall that the assumptions of Corollary~\ref{cor_ISO_QD_1} imply the following version of Hardy’s inequality (see \cite[Theorem~2.3]{DR}):
\begin{align*}
    \int_M \left|\frac{f(x)}{1 + r(x)}\right|^p dx 
    \lesssim 
    \int_M |\nabla f|^p dx,
    \qquad \forall\, 1 \le p < \nu.
\end{align*}
Consequently, since by construction $r_\alpha \sim r(x_\alpha) \ge 1$, we obtain
\begin{align*}
    \sum_{\alpha \ge 1} \|\nabla f_\alpha\|_1 
    &\lesssim 
    \sum_\alpha \|\nabla f\|_{L^1(B_\alpha)} 
    + \sum_\alpha \left\| \frac{f}{r_\alpha} \right\|_{L^{1}(B_\alpha)}\\[4pt]
    &\lesssim 
    \|\nabla f\|_1 
    + \sum_\alpha \left\| \frac{f}{1 + r} \right\|_{L^{1}(B_\alpha)} 
    \lesssim 
    \|\nabla f\|_1.
\end{align*}
As for $\mathcal{J}_0$, we have
\begin{align*}
    \mathcal{J}_{0}
    \lesssim 
    \|f\|_{L^1(B(o,1))} 
    \le 
    2\left\|\frac{f}{1 + r}\right\|_1 
    \lesssim 
    \|\nabla f\|_1 
    \le 
    \|\nabla f\|_1 \, \|f\|_1^{1/n},
\end{align*}
as desired.
\end{proof}

\begin{corollary}\label{Cor_ISO_Rn}
Let $M = \mathbb{R}^n \# \mathbb{R}^n$ with $n \ge 2$. Then \eqref{S_n^p} holds on $M$ for all $1 \le p < n$.
\end{corollary}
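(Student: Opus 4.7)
The plan is to derive Corollary~\ref{Cor_ISO_Rn} as an application of Theorem~\ref{ISO_QD} by verifying each of its hypotheses on $M = \mathbb{R}^n \# \mathbb{R}^n$. The geometric conditions are routine: outside a compact gluing region $K$, the manifold $M$ is isometric to two disjoint copies of $\mathbb{R}^n \setminus B(0,1)$, so Ricci curvature vanishes there and is bounded below on $K$, yielding \eqref{QD} for a suitable $\kappa$. The volume growth is Euclidean, $V(o,r) \simeq r^n$ for large $r$, which gives \eqref{Doubling} and \eqref{RD} with $\nu = n \ge 2 > 1$. Each end is asymptotically Euclidean and carries unbounded geodesic rays, so $(\textrm{RCE})$ holds, and \eqref{VC} follows because for large $R$ and any $x \in \partial B(o, R)$, the ball $B(x, R/2)$ captures a Euclidean ball of comparable radius inside the end containing $x$.

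For the analytic input, $(\textrm{S}_n^2)$ on $M$ is classical for $n \ge 3$; in the borderline case $n = 2$ we invoke Remark~\ref{remark_n>2} and substitute the equivalent Nash inequality, or equivalently the heat-kernel bound $\|e^{-t\Delta}\|_{1 \to \infty} \lesssim t^{-1}$, which holds on $\mathbb{R}^2 \# \mathbb{R}^2$ by standard estimates. The remaining and most delicate ingredient is the Hardy inequality \eqref{Hp} for $1 \le p < n$. To obtain it, I would introduce a smooth partition of unity $1 = \chi_{\mathrm{in}} + \chi_{\mathrm{out}}$ separating a neighborhood of the base point $o$ from the two Euclidean ends. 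On the support of $\chi_{\mathrm{out}}$ one has $r(x) \simeq 1 + r(x)$, so the modified Hardy inequality of \cite[Theorem~2.3]{DR}, applied with $\nu = n$, controls $\int |f/r|^p$ by $\int |\nabla f|^p$. On the support of $\chi_{\mathrm{in}}$, $M$ is smooth at $o$, and passing to normal coordinates reduces the estimate to the classical Euclidean Hardy inequality $\int_{\mathbb{R}^n} \frac{|f(x)|^p}{|x|^p}\,dx \lesssim \int_{\mathbb{R}^n} |\nabla f(x)|^p\,dx$, valid precisely for $1 \le p < n$.

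Once these hypotheses are in hand, Theorem~\ref{ISO_QD} immediately yields \eqref{S_n^p} on $M$ for all $1 \le p < n$, which is exactly the statement of Corollary~\ref{Cor_ISO_Rn}. The anticipated main obstacle is not the invocation of Theorem~\ref{ISO_QD} itself, but the clean assembly of \eqref{Hp}: the partition-of-unity cut-offs generate cross terms of the form $\int |f\,\nabla\chi / r|^p$ that must be reabsorbed using the very Hardy estimates one is building, and the matching of the local Euclidean regime near $o$ with the at-infinity regime on the ends has to be carried out on a compact annulus where both inequalities are simultaneously available. Once this stitching is handled cleanly, the remainder is routine bookkeeping.
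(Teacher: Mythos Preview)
Your approach is correct and yields the corollary, but it differs from the paper's route in one structural respect. The paper does not verify the full Hardy inequality \eqref{Hp} with $r(x)$ in the denominator; instead it invokes Corollary~\ref{cor_ISO_QD_1}, where the bounded-geometry hypothesis handles all small scales via a local isoperimetric inequality, and only the \emph{modified} Hardy inequality with weight $(1+r(x))^{-1}$ from \cite[Theorem~2.3]{DR} is needed for the remote-ball decomposition (that inequality follows directly from \eqref{QD}, \eqref{VC}, $(\textrm{RCE})$ and $(\textrm{RD}_n)$, with no extra work near $o$). This yields \eqref{ISO}, and the remaining $(\textrm{S}_n^p)$ for $1<p<n$ then come for free from co-area and the self-improvement property of Sobolev inequalities. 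Your route instead establishes the genuine \eqref{Hp} by stitching the local Euclidean Hardy inequality near $o$ to the $(1+r)$-version at infinity, and then applies Theorem~\ref{ISO_QD} separately for each $p$. The trade-off: the paper's path avoids the partition-of-unity patching near $o$ (and the cross-term reabsorption you flag) at the price of importing bounded geometry and a separate self-improvement step; your path is more self-contained once \eqref{Hp} is in hand. One minor point: Remark~\ref{remark_n>2} is attached to Proposition~\ref{thm_anti_selfimprovement_general} rather than Theorem~\ref{ISO_QD}, but the substance is identical---the only role of $(\textrm{S}_n^2)$ in the proof of Theorem~\ref{ISO_QD} is the bound $\|e^{-t\Delta}\|_{1\to\infty}\lesssim t^{-n/2}$, which for $n=2$ is available on $\mathbb{R}^2\#\mathbb{R}^2$ from \cite{GS}.
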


\begin{proof}[Proof of Corollary~\ref{Cor_ISO_Rn}]
Note that the main role of the assumption $(\textrm{S}_n^2)$ $(n > 2)$ in Theorem~\ref{ISO_QD} is to guarantee that 
\[
    \|e^{-t\Delta}\|_{1 \to \infty} \lesssim t^{-n/2}.
\]
However, on $M = \mathbb{R}^n \# \mathbb{R}^n$ $(n \ge 2)$, it follows from \cite{GS} that 
\[
    \|e^{-t\Delta_{\mathbb{R}^n \# \mathbb{R}^n}}\|_{1 \to \infty} \lesssim t^{-n/2}, 
    \qquad \forall\, t > 0.
\]
Moreover, one easily checks that $M$ has bounded geometry and satisfies \eqref{VC}, $(\textrm{RCE})$, \eqref{QD}, and $(\textrm{RD}_n)$.  
The result then follows directly from Corollary~\ref{cor_ISO_QD_1}, together with the co-area formula and the standard self-improvement argument.
\end{proof}

\subsection{A non-doubling example: proof of Theorem~\ref{thm_ISO_manifoldswithends}}

The main purpose of this section is to verify an isoperimetric inequality (in the form \eqref{GISO}) on a class of non-doubling manifolds. We modify the classical method introduced in \cite[Theorem~9.1]{BCLS} (see also \cite{Coulhon-Saloff} for a prototype). Specifically, we aim to establish a weak-type estimate of the form
\begin{equation}\label{ISO_weakestimate}
    \mu\big(\{x \in M : |f(x)| \ge \lambda \}\big) 
    \le C\, \lambda^{-1} \Phi(\lambda^{-1} \|f\|_1)\, \|\nabla f\|_1,
    \qquad \lambda > 0,
\end{equation}
for all $f \in C_c^\infty(M)$ and some function $\Phi$. By letting $\Omega \subset M$ be a bounded domain with regular boundary and setting $f = \chi_\Omega$ and $\lambda = 1$, inequality~\eqref{ISO_weakestimate} yields \eqref{GISO} directly.

We now recall the following lemma from \cite{SSi}. We note that in \cite{SSi}, the authors only treated the case $n_* \ge 3$. However, by applying the estimates from \cite{HNS}, the same argument extends the result to $n_* \ge 2$ without difficulty.

\begin{lemma}\cite[Lemma~2.2]{SSi}\label{lemma_SSi}
Let $M$ be a manifold with ends defined by \eqref{eq_manifold}. For each $1\le i\le l$, we have
\begin{align*}
    \|\sqrt{t} \nabla e^{-t\Delta_i}\|_{\infty \to \infty}\lesssim 1,\quad \forall t>0,
\end{align*}
where $\Delta_i = \Delta_{\mathbb{R}^{n_i}\times M_i}$.
\end{lemma}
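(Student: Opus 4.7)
The plan is to exploit the Riemannian product structure of $\mathbb{R}^{n_i}\times M_i$, which causes both the heat semigroup and the gradient to factor through the two components. This reduces the claim to a classical Gaussian gradient estimate on $\mathbb{R}^{n_i}$ and a uniform-in-$t$ $L^\infty\to L^\infty$ gradient estimate on the compact factor $M_i$, which I would handle by a small-time/large-time dichotomy. Concretely, since the metric is a Riemannian product, $\Delta_i = \Delta_{\mathbb{R}^{n_i}} + \Delta_{M_i}$ and the heat kernel splits as $p_t^i((x,y),(x',y')) = p_t^{\mathbb{R}^{n_i}}(x,x')\,p_t^{M_i}(y,y')$, while the Riemannian gradient on the product is $\nabla = (\nabla_x, \nabla_y)$ with $|\nabla u|^2 = |\nabla_x u|^2 + |\nabla_y u|^2$. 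It therefore suffices to bound $\sqrt{t}\,\|\nabla_x e^{-t\Delta_i}f\|_\infty$ and $\sqrt{t}\,\|\nabla_y e^{-t\Delta_i}f\|_\infty$ uniformly by $\|f\|_\infty$.

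For the Euclidean component I would use the explicit Gaussian bound $\int_{\mathbb{R}^{n_i}}|\nabla_x p_t^{\mathbb{R}^{n_i}}(x,x')|\,dx' \lesssim t^{-1/2}$ together with stochastic completeness $\int_{M_i} p_t^{M_i}(y,y')\,dy' = 1$; Fubini then immediately yields $\sqrt{t}\,|\nabla_x e^{-t\Delta_i}f(x,y)| \lesssim \|f\|_\infty$ for every $t>0$. For the compact-factor component, integrating out $x'$ against $p_t^{\mathbb{R}^{n_i}}$ gives $|\nabla_y e^{-t\Delta_i}f(x,y)|\le \|f\|_\infty \int_{M_i}|\nabla_y p_t^{M_i}(y,y')|\,dy'$, so it remains to show $\|\sqrt{t}\,\nabla e^{-t\Delta_{M_i}}\|_{\infty\to\infty}\lesssim 1$. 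For small $t$ this is the standard Li--Yau gradient estimate, available because $M_i$ is a compact manifold with Ricci curvature bounded below and positive injectivity radius. For large $t$ I would decompose $e^{-t\Delta_{M_i}}f = \bar f + \sum_{k\ge 1} e^{-\lambda_k t}\langle f,\phi_k\rangle \phi_k$ into spectral modes, so that $\nabla e^{-t\Delta_{M_i}}f$ decays at rate $e^{-\lambda_1 t}$, which dominates $1/\sqrt{t}$; splitting at $t=1$ glues the two regimes.

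The step I expect to be mildly delicate is producing a clean pointwise (rather than $L^2$) large-time gradient bound on $M_i$ with constant independent of $f\in L^\infty$. The cleanest approach is to use the semigroup identity $e^{-t\Delta_{M_i}} = e^{-\Delta_{M_i}}\,e^{-(t-1)\Delta_{M_i}}$ for $t\ge 1$, so that $\|e^{-(t-1)\Delta_{M_i}}\|_{\infty\to\infty}\le 1$ absorbs $L^\infty$ control of $f$ while differentiation against the smooth, mean-zero part of the kernel of $e^{-\Delta_{M_i}}$ delivers the $e^{-\lambda_1(t-1)}$ decay. I note that the dimension restriction $n_*\ge 2$ alluded to in the excerpt plays no role in this product-level argument: the estimate holds on $\mathbb{R}^{n_i}\times M_i$ for any $n_i\ge 1$, and the extension from $n_*\ge 3$ in \cite{SSi} to $n_*\ge 2$ via \cite{HNS} is only required for subsequent heat-kernel bounds on the glued manifold $M$, not for the bound on each flat end in isolation.
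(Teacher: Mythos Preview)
Your proposal is correct. Note, however, that the paper does not supply its own proof of this lemma: it is simply quoted from \cite{SSi}, with the added remark that the hypothesis $n_*\ge 3$ there can be relaxed to $n_*\ge 2$ using estimates from \cite{HNS}. There is therefore no in-paper argument to compare against.

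Your route via the Riemannian product structure---splitting $\nabla$ into the $\mathbb{R}^{n_i}$ and $M_i$ directions, handling the Euclidean factor by the explicit Gaussian gradient bound, and treating the compact factor by a small-time Li--Yau estimate glued at $t=1$ to a spectral-gap (or equivalently $e^{-t\Delta_{M_i}}=e^{-\Delta_{M_i}}e^{-(t-1)\Delta_{M_i}}$) decay---is standard, self-contained, and valid for every $n_i\ge 1$. Your final observation is also on point: the dimensional restriction $n_*\ge 3$ (resp.\ $n_*\ge 2$) in \cite{SSi}/\cite{HNS} pertains to heat-kernel bounds on the glued manifold $M$, not to the gradient estimate on an individual product end, so it is irrelevant to the present lemma.
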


In the virtue of \cite[Theorem 7.7]{grigor1999}, as a consequence of bounded geometry, one always has
\begin{equation*}
    \min \left( |\Omega|^{\frac{N-1}{N}}, 1 \right) \lesssim |\partial \Omega|.
\end{equation*}
Therefore, Theorem~\ref{thm_ISO_manifoldswithends} directly follows by the following Proposition~\ref{lemma_ISO_large}.

\begin{proposition}\label{lemma_ISO_large}
Let $M$ a the manifold with ends defined by \eqref{eq_manifold}. Let $\Omega \subset M$ be a large bounded domain with smooth boundary in the sense $|\Omega|\ge 1$. Then
\begin{equation*}
    |\Omega|^{\frac{n_*-1}{n_*}} \lesssim  |\partial \Omega| \quad \forall |\Omega|\ge 1, 
\end{equation*}
where the implicit constant does not depend on $\Omega$, i.e., $M$ has isoperimetric dimension $n_*$.
\end{proposition}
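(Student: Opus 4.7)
The plan is to deduce Proposition~\ref{lemma_ISO_large} from a weak-type Sobolev inequality in the spirit of \cite[Theorem~9.1]{BCLS}, namely
\[
\textrm{vol}\bigl(\{x\in M:|f(x)|\ge \lambda\}\bigr) \lesssim \lambda^{-1-1/n_*}\|\nabla f\|_1\|f\|_1^{1/n_*},\qquad \lambda^{-1}\|f\|_1\ge 1.
\]
Once this is in hand, applying it to a smooth approximation of $\chi_\Omega$ with $\lambda=1$ and then invoking the co-area formula (exactly as in the proof of Corollary~\ref{cor_ISO_QD_1}) yields $|\Omega|^{(n_*-1)/n_*}\lesssim |\partial \Omega|$ whenever $|\Omega|\ge 1$. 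The substance of the proof lies in the weak-type inequality.

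To prove it, I would run the heat-semigroup splitting used in Proposition~\ref{thm_anti_selfimprovement_general}. Writing
\[
\textrm{vol}\bigl(\{|f|\ge \lambda\}\bigr) \le \textrm{vol}\bigl(\{|e^{-t\Delta}f|\ge \lambda/2\}\bigr) + \textrm{vol}\bigl(\{|f-e^{-t\Delta}f|\ge \lambda/2\}\bigr),
\]
I would kill the first term by choosing $t$ large enough, via the heat-kernel bound $\|e^{-t\Delta}\|_{1\to\infty}\lesssim t^{-n_*/2}$ which holds on $M$ for $t\ge 1$ (see \cite{GS} and \cite{HNS}); concretely $t\simeq (\|f\|_1/\lambda)^{2/n_*}$ suffices, and the hypothesis $\lambda^{-1}\|f\|_1\ge 1$ guarantees $t\ge 1$, placing us in the regime where the large-time heat-kernel estimate applies. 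For the second term, Markov's inequality and the identity $f-e^{-t\Delta}f=\int_0^t \Delta e^{-s\Delta}f\,ds$ reduce everything to estimating $\|\Delta e^{-s\Delta}f\|_1$. By duality,
\[
\|\Delta e^{-s\Delta}f\|_1 \le \|\nabla f\|_1\,\|\nabla e^{-s\Delta}\|_{\infty\to\infty},
\]
so it suffices to have the global gradient bound $\|\sqrt{s}\,\nabla e^{-s\Delta}\|_{\infty\to\infty}\lesssim 1$ on $M$. Integrating in $s$ over $[0,t]$ then produces $t^{1/2}\|\nabla f\|_1$, and substituting the value of $t$ delivers the weak-type estimate.

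The critical obstacle is the global gradient bound. Lemma~\ref{lemma_SSi} supplies it on each end $\mathbb{R}^{n_i}\times M_i$ for the end's own Laplacian $\Delta_i$, not for the glued Laplacian $\Delta$ on $M$. The announced Lemma~\ref{lemma_ISO_M} is intended precisely as the bridge. I would introduce a partition of unity subordinate to a bounded neighborhood of the compact core $K$ together with the ends $E_1,\dots,E_l$, decompose a bounded test function $g$ accordingly, and compare the action of $e^{-s\Delta}$ on each localized piece with that of the corresponding end semigroup $e^{-s\Delta_i}$. Near $K$, bounded geometry yields the required local gradient estimate directly; on each end, Lemma~\ref{lemma_SSi} applies to the dominant piece, and the commutator errors arising from the cut-offs must be absorbed.

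The hard part is precisely this gluing step. The commutators between the partition of unity and $e^{-s\Delta}$ generate contributions concentrated near $\partial K$ whose $L^\infty$ control requires exploiting both the compactness of $K$ and the fact that the manifold is ``flat at infinity'' ($\kappa=0$ outside a compact set in \eqref{QD}), so that the perturbation caused by gluing is a compactly supported modification of the product ends. Heat-kernel decay together with the hypothesis $n_*\ge 3$ should provide enough off-diagonal smallness across $K$ to absorb these errors uniformly in $s$. Once the global gradient bound is secured, all remaining steps are routine manipulations of the BCLS weak-type machinery as already illustrated in Proposition~\ref{thm_anti_selfimprovement_general} and Theorem~\ref{ISO_QD}.
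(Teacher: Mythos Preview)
Your approach has a genuine gap: the global gradient bound $\|\sqrt{s}\,\nabla e^{-s\Delta}\|_{\infty\to\infty}\lesssim 1$ on $M$ that you need is \emph{false} for this class of manifolds. The paper itself records (just after Proposition~\ref{thm_anti_selfimprovement_general}) that on $\mathbb{R}^n\#\mathbb{R}^n$ one has \eqref{G_p} if and only if $1<p<n$; in particular $(\textrm{G}_\infty)$ fails even in the simplest case, and the situation does not improve for the general connected sums in \eqref{eq_manifold}. So no amount of commutator bookkeeping will produce the global $(\textrm{G}_\infty)$ you are aiming for, and the proposed route stalls precisely at its ``critical obstacle''.

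The paper circumvents this by reversing the order of operations: it localizes $f$ \emph{before} applying any heat semigroup. One writes $f=\sum_i f\phi_i + f_0$ with $\phi_i$ supported in the end $E_i$, extends each $f_i$ by zero to a function on the full product $\mathbb{R}^{n_i}\times M_i$, and then runs the heat-semigroup splitting with the \emph{end} operator $e^{-t_i\Delta_i}$ rather than the global $e^{-t\Delta}$. On the product $\mathbb{R}^{n_i}\times M_i$, Lemma~\ref{lemma_SSi} gives $(\textrm{G}_\infty)$ directly, so the BCLS mechanism runs cleanly on each piece and yields $\mathcal{L}_i\lesssim \|\nabla f_i\|_1\,\|f\|_1^{1/n_*}$. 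The price is that $\|\nabla f_i\|_1$ now carries a cut-off error $\|f\|_{L^1(\tilde K)}$ concentrated near the core. You have also misread the role of Lemma~\ref{lemma_ISO_M}: it is not a gradient-bound bridge at all, but the Hardy-type estimate $\|u\|_{L^1(\tilde K)}\lesssim \|\nabla u\|_1$, proved by pairing against $\Delta\mathcal{U}_v$ where $\mathcal{U}_v$ is the inverse Laplacian of a bounded $v$ supported in $\tilde K$, using the pointwise decay $|\nabla\mathcal{U}_v|\lesssim |x|^{1-n_i}$ on each end from \cite{HS}. This lemma absorbs both the cut-off errors and the core term $\mathcal{L}_0$, finishing the argument.
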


\begin{proof}[Proof of Proposition~\ref{lemma_ISO_large}]
Let $\Omega \subset M$ with $|\Omega|\ge 1$. We pick a sequence of smooth cut-off functions $\{\phi_i\}_{1\le i\le l}\subset C^\infty(M)$ such that $\textrm{supp}(\phi_i)\subset E_i$ and $\phi_i=1$ outside some compact set with smooth boundary, containing $K_i\subset \mathbb{R}^{n_i}\times M_i$. Let $f\in C_c^\infty(M)$ with $\|f\|_1\ge 1$. Following the idea from \cite{Coulhon-Saloff}, one considers level set
\begin{equation*}
    \{x\in M; |f|\ge 1\}.
\end{equation*}
Next, we decompose 
\begin{equation*}
    f = \sum_{1\le i\le l} f\phi_i + f\left(1-\sum_{1\le i\le l}\phi_i \right) := \sum_{1\le i\le l}f_i + f_{0}.
\end{equation*}
It is clear to see that for each $1\le i \le l$, $f_i\in C_c^\infty(\mathbb{R}^{n_i}\times M_i)$ (by zero extension) and $f_0\in C_c^\infty(\Tilde{K})$, where $\Tilde{K}$ is a small neighbourhood of the compact connection set $K$ with smooth boundary. By the decomposition, one can split the level set by
\begin{align*}
    \textrm{vol}\left(\{ |f|\ge 1\} \right) \le \sum_{i=1}^l \textrm{vol} \left(\{E_i; |f_i|\ge 1/(l+1)\} \right) + \textrm{vol} \left(\left\{\Tilde{K}; |f_0|\ge 1/(l+1) \right\}\right)\\
    := \sum_{i=1}^l \mathcal{L}_i + \mathcal{L}_0. 
\end{align*}
For each $1\le i\le l$, we further decompose (where $\Delta_i = \Delta_{\mathbb{R}^{n_i}\times M_i}$ is the Laplace-Beltrami operator on $\mathbb{R}^{n_i}\times M_i$)
\begin{equation*}
    f_i = f_i - e^{-t_i \Delta_i}f_i + e^{-t_i \Delta_i}f_i = e^{-t_i \Delta_i}f_i + \int_0^{t_i} \Delta_i e^{-s\Delta_i}f_i ds,
\end{equation*}
for some $t_i>1$ to be determined later. For the heat kernel $e^{-t_i \Delta_i}$, some straightforward estimates \cite[Section 2.2]{HS} give
\begin{equation*}
    e^{-t \Delta_i}(x,y) \sim (t^{-n_i/2} + t^{-N/2}) e^{-c\frac{d(x,y)^2}{t}} \quad \forall x,y\in E_i, \quad \forall t>0.
\end{equation*}
Hence, one obtains for $t_i>1$,
\begin{gather}\label{eq_1toinfty}
    \|e^{-t_i \Delta_i}f_i\|_\infty \le C\|f\|_1 t_i^{-n_i/2}.
\end{gather}
Choose $t_i = \left(2C(l+1)\|f\|_1\right)^{2/n_i}>1$ and then deduce
\begin{equation*}
    \textrm{vol}\left(\{x\in E_i; e^{-t_i \Delta_i}f_i >1/2(l+1)\}\right) = 0.
\end{equation*}
As a consequence, it follows by Chebyshev's inequality and then followed by Minkowski's integral inequality
\begin{align*}
    \mathcal{L}_i &\le \textrm{vol} \left( \left\{x\in E_i; \int_0^{t_i} |\Delta_i e^{-s\Delta_i}f_i| ds\ge 1/2(l+1)\right\} \right)\\
    &\lesssim \int_0^{t_i}\|\Delta_i e^{-s\Delta_i}f_i\|_{L^1(\mathbb{R}^{n_i}\times M_i)} ds.
\end{align*}
Let $g\in L^\infty(\mathbb{R}^{n_i}\times M_i)$ such that $\|g\|_{\infty}=1$. By the self-adjointness of $\Delta_i$ and integration by parts, one infers
\begin{gather*}
    \int_{\mathbb{R}^{n_i}\times M_i} \Delta_i e^{-s\Delta_i}f_i(x) g(x) d\mu = \int_{\mathbb{R}^{n_i}\times M_i} \nabla f_i(x) \cdot \nabla e^{-s\Delta_i}g(x) d\mu  \le \|\nabla f_i\|_1 \| \nabla e^{-s\Delta_i} \|_{\infty \to \infty} \|g\|_\infty.
\end{gather*}
Therefore, one concludes by duality
\begin{equation*}
    \|\Delta_i e^{-s\Delta_i}f_i\|_{L^1(\mathbb{R}^{n_i}\times M_i)}\lesssim \|\nabla f_i\|_1 \| \nabla e^{-s\Delta_i} \|_{\infty \to \infty}.
\end{equation*}
By Lemma~\ref{lemma_SSi}, we may continue
\begin{align}\label{eq_L_i}
    \mathcal{L}_i &\lesssim \int_0^{t_i} \|\nabla f_i\|_1 \| \nabla e^{-s\Delta_i} \|_{\infty \to \infty} ds\\ \nonumber
    &\lesssim \|\nabla f_i\|_1 t_i^{1/2}\sim \|\nabla f_i\|_1 \|f\|_1^{1/n_i}\\ \nonumber
    &\lesssim \|\nabla f\|_1  \|f\|_1^{1/n_*} + \|f\|_{L^1(\Tilde{K})}  \|f\|_1^{1/n_*}
\end{align}
since $\|f\|_1\ge 1$.

Regarding the term $\mathcal{L}_0$, one simply bounds it by
\begin{equation}\label{eq_L0}
    \mathcal{L}_0 \lesssim \|f_0\|_1 \lesssim \|f\|_{L^1(\Tilde{K})}.
\end{equation}

\begin{lemma}\label{lemma_ISO_M}
Under the assumptions of Theorem~\ref{thm_ISO_manifoldswithends}, we have
\begin{align*}
    \|u\|_{L^1(\Tilde{K})} \lesssim \|\nabla u\|_1
\end{align*}
for any $u\in C_c^\infty(M)$.
\end{lemma}

\begin{proof}[Proof of Lemma~\ref{lemma_ISO_M}]
By \cite[Lemma~2.7]{HS}, for a bounded function $v\in L^\infty$ with $\textrm{supp}(v)\subset \Tilde{K}$, its 'inverse Laplacian':
\begin{align*}
    \mathcal{U}_v(x):= \lim_{k\to 0}(\Delta+k^2)^{-1}v(x)
\end{align*}
exists and has pointwise gradient estimates: (here $|x|:= \sup_{a\in \Tilde{K}}d(x,a) \gtrsim 1$)
\begin{align*}
    |\nabla \mathcal{U}_v(x)|\lesssim \|v\|_\infty \begin{cases}
        |x|^{1-n_i}, &x\in E_i,\\
        1, &x\in \Tilde{K},
    \end{cases}
\end{align*}
which particularly implies $\|\nabla \mathcal{U}_v\|_\infty \lesssim \|v\|_\infty$. 

Consequently,
\begin{align*}
    \int_{\Tilde{K}} u(x) v(x) dx &= \int_M u(x) \Delta \mathcal{U}_v(x) dx\\
    &= \int_M \nabla u(x) \cdot \nabla \mathcal{U}_v(x) d\mu\\
    &\le \|\nabla u\|_1 \|\nabla \mathcal{U}_v\|_\infty\\
    &\lesssim \|\nabla u\|_1 \|v\|_\infty.
\end{align*}
The result follows by ranging all $\|v\|_\infty = 1$ with $\textrm{supp}(v) \subset \Tilde{K}$.

\end{proof}

Now, by \eqref{eq_L_i}, \eqref{eq_L0}, and Lemma~\ref{lemma_ISO_M}, we obtain, for all $\|f\|_1 \ge 1$,
\begin{align*}
    \mathrm{vol}\!\left(\{x \in M : |f(x)| \ge 1\}\right)
    \lesssim 
    \sum_{i=1}^l \mathcal{L}_i + \mathcal{L}_0
    \lesssim 
    \|\nabla f\|_1 \, \|f\|_1^{1/n_*},
\end{align*}
as desired.  
To this end, set $f = \chi_\Omega$ (see \cite[p.~34]{Hebey_Sobolev_Spaces_on_Riemannian_manifolds} for a formal approximation argument) to obtain
\begin{align*}
    |\Omega| \lesssim |\partial \Omega| \, |\Omega|^{1/n_*},
\end{align*}
which completes the proof of Proposition~\ref{lemma_ISO_large}.

\end{proof}

\begin{remark}
In fact, for the special case where $M = \mathbb{R}^n \# \mathbb{R}^n$ with $n \ge 3$, one can prove the corresponding \eqref{ISO} in a more direct way (cf.~Corollary~\ref{Cor_ISO_Rn}). Indeed, by the co-area formula, it suffices to show that
\begin{equation*}
    \|v\|_{\frac{n}{n-1}} \lesssim \|\nabla v\|_1, 
    \qquad \forall\, v \in C_c^\infty(M).
\end{equation*}
Pick a cut-off function $\eta$ compactly supported in a small neighbourhood $\widetilde{K}$ of $K$, such that $\eta = 1$ on $K$.  
Then $v(1-\eta) \in C_c^\infty(\mathbb{R}^n)$, and by the isoperimetric inequality on $\mathbb{R}^n$ (more precisely, one can write $v(1-\eta) = v_1 + v_2$ with $v_1, v_2 \in C_c^\infty(\mathbb{R}^n)$; we omit the details for simplicity), we obtain
\begin{align*}
    \|v(1-\eta)\|_{\frac{n}{n-1}} 
    \lesssim 
    \|\nabla v\|_1 + \|v\|_{L^1(\widetilde{K})}.
\end{align*}
On the other hand, $v\eta$ is compactly supported in a set of finite measure, say $\mathrm{vol}(\widetilde{K}) \le 1$.  
Since $M$ has bounded geometry, one may apply \cite[Theorem~7.7]{grigor1999} together with the co-area formula (see also \cite[Lemmas~3.16--3.17]{Hebey_Sobolev_Spaces_on_Riemannian_manifolds}) to obtain
\begin{align*}
    \|v\eta\|_{L^{\frac{n}{n-1}}(\widetilde{K})} 
    \lesssim 
    \|\nabla(v\eta)\|_{L^1(\widetilde{K})}
    \lesssim 
    \|\nabla v\|_1 + \|v\|_{L^1(\widetilde{K})}.
\end{align*}
The claim then follows from Lemma~\ref{lemma_ISO_M}.
\end{remark}

\section{Isoperimetric inequality on Grushin spaces}

In this section, we study Sobolev inequalities on Grushin spaces. Recall that for $(x,y) \in \mathbb{R}^{n+m}$ ($n,m \ge 1$), the Grushin-type operator is defined by
\begin{align}\label{Grushin}
    L = - \sum_{i=1}^n \partial_{x_i}^2 - |x|^{2\beta} \sum_{j=1}^m \partial_{y_j}^2 
    = \Delta_x + |x|^{2\beta} \Delta_y, 
    \qquad \beta \ge 0.
\end{align}
Its associated gradient operator is given by
\begin{align}\label{Grushin_gradient}
   \nabla_{L} = \left(\nabla_x,\, |x|^{\beta} \nabla_y \right),
\end{align}
and the corresponding \textit{carré du champ} (square of the gradient length) is
\begin{align*}
    |\nabla_L f|^2 
    = \sum_{i=1}^n |\partial_{x_i} f|^2 
    + |x|^{2\beta} \sum_{j=1}^m |\partial_{y_j} f|^2.
\end{align*}
It is straightforward to verify that the functional $W(f) = \|\nabla_L f\|_p$ satisfies \eqref{eq_Wq} for all $q \ge p$.

Next, the integration by parts formula takes the form
\begin{align*}
    \int_{\mathbb{R}^{n+m}} Lf(\xi)\, g(\xi)\, d\xi 
    = \int_{\mathbb{R}^{n+m}} \nabla_L f(\xi) \cdot \nabla_L g(\xi)\, d\xi,
\end{align*}
for all $f,g \in C_c^\infty(\mathbb{R}^{n+m})$.  
Define
\begin{align*}
    D = \left\{\phi \in W^{1,\infty}(\mathbb{R}^{n+m}) : |\nabla_L \phi|^2 \le 1 \right\}.
\end{align*}
The canonical distance associated with $L$ is then given by
\begin{align*}
    d(\xi,\eta) := \sup_{\phi \in D} |\phi(\xi) - \phi(\eta)| \in [0,\infty),
\end{align*}
for all $\xi,\eta \in \mathbb{R}^{n+m}$.  
The corresponding geodesic ball $B(\xi,r)$ is defined by
\begin{align*}
    B(\xi,r) = \left\{ \eta \in \mathbb{R}^{n+m} : d(\eta,\xi) < r \right\},
\end{align*}
and we denote by $V(\xi,r)$ the volume of $B(\xi,r)$ with respect to the Lebesgue measure on $\mathbb{R}^{n+m}$.



\subsection{Curvature dimension inequality and gradient estimates}

In their seminal work on diffusion generators, Bakry and Émery \cite{BE} introduced the curvature–dimension condition, denoted $\textrm{CD}(K,N)$, as an abstract substitute for a lower Ricci bound and an upper dimension bound in a purely analytic framework. Given a symmetric Markov generator $\mathcal{L}$ with carré du champ $\Gamma$ and iterate $\Gamma_2$, the inequality
\begin{align}\tag{$\textrm{CD}(K,N)$}\label{CD(K,N)}
    \Gamma_2(f) \ge K \Gamma(f) + \frac{1}{N} \left(L(f)\right)^2,\quad \forall f\in C^\infty
\end{align}
encodes both a 'Ricci curvature' lower bound $K$ and 'effective dimension' $N$. Originally motivated by hypercontractivity and logarithmic Sobolev inequalities, $\textrm{CD}(K,N)$ unified a variety of functional–inequality proofs under one versatile hypothesis.

On a smooth Riemannian manifold $(M,g)$ with $\mathcal{L} = -\Delta_g$ the negative sign Laplace-Beltrami operator on $M$, one recovers exactly the classical Ricci curvature bound $\textrm{Ric}_g \ge K g$ when $\textrm{dim}(M) = N$. In this setting, the Bochner identity shows that $\textrm{CD}(K,N)$ is equivalent to the Ricci curvature lower bound $\textrm{Ric}_g\ge Kg$ on Riemannian manifolds, tying the analytic condition directly to geometric curvature. One of the most celebrated consequences of $\textrm{CD}(K,N)$ is the derivation of Li–Yau gradient estimates and Harnack inequalities (see \cite{Li-Yau}) for the heat semigroup: the same maximum‐principle arguments that yield pointwise bounds on $|\nabla\log{p_t}|$ under $\textrm{Ric} \ge K$ work verbatim under $\textrm{CD}(K,N)$. This analytic versatility paved the way for extending curvature–dimension ideas beyond smooth manifolds.

In the past decade, Lott, Villani \cite{LV} and Sturm \cite{Sturm1,Sturm2} formulated a synthetic curvature–dimension condition $\textrm{CD}(K,N)$ for metric measure spaces, and Ambrosio, Gigli, and Savaré \cite{AGS} further generalized it to the $\textrm{RCD}(K,N)$ and $\textrm{RCD}^{*}(K,N)$ frameworks. In particular, $\textrm{RCD}^{*}(K,N)$ coincides with the original $\textrm{Ric}\ge K$ and dimension $\le N$ on the Riemannian setting, and for the case of the weighted manifolds, the notion $\textrm{RCD}^{*}(K,N)$ coincides with $\textrm{CD}(K,N)$ condition. These developments combine optimal‐transport techniques with Bakry–Émery calculus, yielding a robust theory of nonsmooth spaces with lower Ricci bounds and finite dimension.

For the conventional issue, we let $\mathcal{L}$ be a \textbf{non-positive} self-adjoint diffusion operator. Formally, one defines bilinear form:
\begin{align}\label{carre1}
    \Gamma(f,g) = \frac{1}{2} \left( \mathcal{L}(fg) - f \mathcal{L}(g) - g\mathcal{L}(f)\right), \quad \forall f,g\in C^\infty.
\end{align}
Note that $\Gamma$ is symmetric and $\Gamma(f) = \Gamma(f,f)$ is the so-called \textit{carre du champ} operator. Next, one defines the iterated bilinear form:
\begin{align}\label{carre2}
    \Gamma_2(f,g) = \frac{1}{2} \left( \mathcal{L}\Gamma(f,g) - \Gamma(f,\mathcal{L}(g)) - \Gamma(g,\mathcal{L}(f)) \right),
\end{align}
and $\Gamma_2(f) = \Gamma_2(f,f)$ is the second \textit{carre du champ} operator.

The first step of our argument is to estimate the 'Ricci curvature' of the Grushin space. 

\begin{lemma}\label{le_Ric_Grushin}
Let $n,m\ge 1$ and $\beta \ge 0$. The Grushin space satisfies the following curvature dimension inequality:
\begin{align*}
\Gamma_2(f) \ge - \frac{c_1(n,m,\beta)}{|x|^2} \Gamma(f) + c_2(n,m,\beta) \left(L(f) \right)^2,\quad \forall f\in C^\infty( \mathbb{R}^{n}\setminus\{0\} \times \mathbb{R}^m).
\end{align*}
\end{lemma}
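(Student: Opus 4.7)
I would proceed by direct computation of $\Gamma_2(f) = \tfrac{1}{2}L\Gamma(f) - \Gamma(f, Lf)$, where $L = \Delta_x + |x|^{2\beta}\Delta_y$ and $\Gamma(f) = |\nabla_x f|^2 + |x|^{2\beta}|\nabla_y f|^2$. Splitting $\Gamma(f) = A + B$ with $A = |\nabla_x f|^2$ and $B = |x|^{2\beta}|\nabla_y f|^2$, I would expand $\tfrac12 L(A)$ and $\tfrac12 L(B)$ using the standard Bochner identity for $\Delta_x$ and $\Delta_y$ applied to squared gradients, being careful with the Leibniz rule when $\Delta_x$ hits the weight $|x|^{2\beta}$. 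Simultaneously I would expand $\Gamma(f, Lf) = \nabla_x f\cdot\nabla_x(Lf) + |x|^{2\beta}\nabla_y f\cdot\nabla_y(Lf)$, noting that $\nabla_x(Lf)$ contributes an extra lower-order term $2\beta|x|^{2\beta-2}x\,\Delta_y f$.

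As in the classical Bochner calculation, the third-order derivative contributions cancel exactly. What remains assembles into a positive quadratic form in the second derivatives,
\[
|\mathrm{Hess}_x f|^2 + 2|x|^{2\beta}\sum_{i,j}(\partial^2_{x_i y_j}f)^2 + |x|^{4\beta}|\mathrm{Hess}_y f|^2,
\]
a strictly positive first-derivative contribution $\beta(2\beta + n - 2)|x|^{-2}B$ coming from $\Delta_x$ hitting $|x|^{2\beta}$ twice (non-negative for $n \ge 2$, $\beta \ge 0$), and two mixed first-second derivative terms
\[
4\beta|x|^{2\beta-2}\sum_{i,j}x_i(\partial_{y_j}f)(\partial^2_{x_i y_j}f) \quad \text{and}\quad -2\beta|x|^{2\beta-2}(x\cdot \nabla_x f)\,\Delta_y f.
\]

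I would then apply Young's inequality $|ab| \le \epsilon a^2 + (4\epsilon)^{-1}b^2$ to each mixed term, pairing the second-derivative factor with the positive Hessian block and the first-derivative factor (with the extracted $|x|^{-1}$) with $|x|^{-2}\Gamma(f)$. With $\epsilon$ small enough this leaves a definite positive fraction of the Hessian block intact while producing a loss of the form $-c_1|x|^{-2}\Gamma(f)$. To produce the dimension term, I would use $|\mathrm{Hess}_x f|^2 \ge n^{-1}(\Delta_x f)^2$ and $|\mathrm{Hess}_y f|^2 \ge m^{-1}(\Delta_y f)^2$, combined with the elementary inequality $\tfrac1n a^2 + \tfrac1m b^2 \ge \tfrac{1}{n+m}(a+b)^2$ applied with $a = \Delta_x f$, $b = |x|^{2\beta}\Delta_y f$, to conclude that the surviving Hessian block dominates a constant multiple of $(Lf)^2$.

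\textbf{Main obstacle.} The delicate point is the bookkeeping of the mixed terms and making sure that the Young parameter $\epsilon$ can be chosen small enough that a strictly positive coefficient remains in front of the pure second-derivative block, so that the $(Lf)^2$ term does not get fully absorbed. A subtle technicality is that the first-derivative remainders after Young come in two flavours, $|x|^{-2}|\nabla_x f|^2$ and $|x|^{2\beta-2}|\nabla_y f|^2$; it is essential to rewrite the latter as $|x|^{-2}\cdot|x|^{2\beta}|\nabla_y f|^2$ so that it matches the second component of $\Gamma(f)$, otherwise the estimate would degrade as $\beta \to 0$. An alternative, equivalent route is the geometric one sketched in the introduction: identify $L$ on $\mathbb{R}^n\setminus\{0\}\times\mathbb{R}^m$ with a weighted Laplace--Beltrami operator for the doubly warped metric $g = dx^2 + |x|^{-2\beta}dy^2$, compute the Bakry--Émery Ricci tensor directly from the standard warped-product formulas, and read off the curvature-dimension inequality; I expect this to yield the same exponent $-2$ and constants of the same qualitative nature.
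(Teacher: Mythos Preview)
Your proposal is correct and follows essentially the same approach as the paper: direct computation of $\Gamma_2$ produces the Hessian block $S_{xx}+2|x|^{2\beta}S_{xy}+|x|^{4\beta}S_{yy}$, the zero-order term $\beta(n+2\beta-2)|x|^{2\beta-2}|\nabla_y f|^2$, and the two mixed terms, which are then absorbed by Cauchy--Schwarz/Young into the Hessian block and $|x|^{-2}\Gamma(f)$, leaving a positive fraction of the Hessian to dominate $(Lf)^2$ via the trace inequality. The only cosmetic difference is that the paper applies the trace inequality $\tfrac{1}{n+m}(\operatorname{tr}\mathcal{H})^2\le\|\mathcal{H}\|_F^2$ directly to the full $(n+m)\times(n+m)$ weighted Hessian, whereas you apply it blockwise and then combine with $\tfrac1n a^2+\tfrac1m b^2\ge\tfrac{1}{n+m}(a+b)^2$; these are equivalent. (Note a sign slip in your second mixed term: the paper's computation gives $+2\beta|x|^{2\beta-2}(x\cdot\nabla_x f)\Delta_y f$ with their sign convention $\Delta_y=-\sum\partial_{y_j}^2$; this is immaterial since Young's inequality is sign-symmetric.)
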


\begin{proof}[Proof of Lemma~\ref{le_Ric_Grushin}]
By \eqref{appendix2} in the Appendix~\ref{appendix}, a direct and lengthy computation yields that
\begin{align}\label{CD1}
    \Gamma_2(f)(x,y) = S_{xx} + 2 |x|^{2\beta} S_{xy} + |x|^{4\beta} S_{yy} + \beta(n+2\beta-2)|x|^{2\beta-2} |\nabla_y f|^2 \\ \nonumber
    + 4\beta |x|^{2\beta-2} \sum_{i=1}^n \sum_{j=1}^m x_i f_{y_j} f_{x_i,y_j} + 2\beta |x|^{2\beta-2} \Delta_y f \sum_{i=1}^n x_i f_{x_i},
\end{align}
and $\Gamma(f) = |\nabla_L f|^2 = |\nabla_x f|^2 + |x|^{2\beta}|\nabla_y f|^2$ and $\left(L(f)\right)^2 = \left(\Delta_x f + |x|^{2\beta}\Delta_y f\right)^2$, where $S_{xx} = \sum_{i,k=1}^n f_{x_i,x_k}^2$, $S_{xy} = \sum_{i=1}^n \sum_{j=1}^m f_{x_i,y_j}^2$ and $S_{yy} = \sum_{j,l=1}^m f_{y_j,y_l}^2$. We first estimate the cross-terms. By Cauchy-Schwartz inequality, we have
\begin{align*}
    4\beta |x|^{2\beta-2} x_i f_{y_j} f_{x_i,y_j} \ge -|x|^{2\beta} f_{x_i,y_j}^2 - 4\beta^2 x_i^2 |x|^{2\beta-4} f_{y_j}^2.
\end{align*}
Summing over $i,j$, one deduces that
\begin{align}\label{CD2}
    4\beta |x|^{2\beta-2} \sum_{i=1}^n \sum_{j=1}^m x_i f_{y_j} f_{x_i,y_j} \ge -|x|^{2\beta}S_{xy} - 4\beta^2 |x|^{2\beta-2} |\nabla_y f|^2.
\end{align}
Next, by using a similar method, one gets
\begin{align}\label{CD3}
    2\beta |x|^{2\beta-2} \Delta_y f \sum_{i=1}^n x_i f_{x_i} \ge - \epsilon |x|^{4\beta} \left(\Delta_y f\right)^2 - \epsilon^{-1} 4 \beta^2 |x|^{-2} |\nabla_x f|^2, \quad \forall \epsilon>0.
\end{align}
Plug \eqref{CD2} and \eqref{CD3} into \eqref{CD1}. One obtains
\begin{align}\label{CD5}
    \Gamma_2(f) \ge \frac{S_{xx} + 2 |x|^{2\beta} S_{xy} + |x|^{4\beta} S_{yy}}{2} - \frac{c_1(n,m,\beta,\epsilon)}{|x|^2} \Gamma(f) - \epsilon \left(L(f)\right)^2, \quad \epsilon>0.
\end{align}
To this end, one considers $(n+m)\times (n+m)$ matrix:
\begin{align*}
\mathcal{H} = 
\begin{bmatrix}
\textrm{Hess}_{xx}f & \textrm{Hess}_{xy}f \\
\textrm{Hess}_{xy}^{T}f & \textrm{Hess}_{yy}f
\end{bmatrix},
\end{align*}
where $\textrm{Hess}$ is the Hessian matrix with respective to the gradient $\nabla_L = (\nabla_x,|x|^{\beta}\nabla_y)$. Note that the square of the Frobenius norm of $\mathcal{H}$ is just $S_{xx} + 2 |x|^{2\beta} S_{xy} + |x|^{4\beta} S_{yy}$ and the trace of $\mathcal{H}$ coincides with $L(f)$. It then follows by trace inequality that
\begin{align}\label{CD4}
    \frac{L(f)^2}{n+m} \le  S_{xx} + 2 |x|^{2\beta} S_{xy} + |x|^{4\beta} S_{yy}.
\end{align}
The proof follows by plugging \eqref{CD4} into \eqref{CD5} and choosing $\epsilon = \epsilon(n,m,\beta)>0$ small enough.

\end{proof}

Our next goal is to obtain estimates for the gradient of the heat kernel.  
To this end, we recall the following results from \cite{RS}.  
Throughout this section, we use the notations $\xi = (x,y)$ and $\eta = (x',y')$, 
where $x, x' \in \mathbb{R}^n$ and $y, y' \in \mathbb{R}^m$.

\begin{lemma}\cite[Proposition~5.1]{RS}\label{lemma_DS1}
Under the assumptions of Theorem~\ref{thm_ISO_Grushin}, the follwoing estimates hold:
\begin{align}
    d(\xi,\eta) = d((x,y);(x',y')) \sim |x-x'| + \frac{|y-y'|}{(|x|+|x'|)^{\beta} + |y-y'|^{\frac{\beta}{\beta+1}}},
\end{align}
and
\begin{align*}
    V(\xi,r) = V((x,y),r) \sim \begin{cases}
        r^{\mathcal{Q}}, & r\ge |x|,\\
        r^{n+m}|x|^{m\beta}, & r\le |x|.
    \end{cases}
\end{align*}
Moreover, the following volume doubling condition \eqref{Doubling} holds, i.e.
\begin{align*}
    V(\xi,sr) \le C s^{\mathcal{Q}} V(\xi,r)
\end{align*}
for all $\xi \in \mathbb{R}^{n+m}$ and all $s\ge 1$, $r>0$.
\end{lemma}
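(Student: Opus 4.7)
The plan is to establish the three claims in the order stated: the Carnot--Carath\'eodory distance equivalence, the volume formula by direct Fubini integration, and then the doubling bound as an automatic consequence.

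For the distance, I would first identify $d$ with the Carnot--Carath\'eodory distance associated with the horizontal frame $X_i = \partial_{x_i}$, $Y_j = |x|^\beta \partial_{y_j}$, which is standard once one verifies that the supremum in the definition of $d$ is attained by $\nabla_L$-Lipschitz functions. The upper bound then reduces to exhibiting efficient admissible paths. Motion in the $x$ variable costs Euclidean length, while motion in $y$ at $x$-radius $\sim r_0$ proceeds at horizontal speed $\sim r_0^\beta$. To go from $(x,y)$ to $(x',y')$ one either stays at radius $\sim |x|+|x'|$, paying $|y-y'|/(|x|+|x'|)^\beta$ for the $y$-shift, or first moves outward to radius $r_0$ and back, paying $r_0 + |y-y'|/r_0^\beta$ which is optimized at $r_0 \sim |y-y'|^{1/(\beta+1)}$. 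Adding the $x$-contribution and noting
\begin{equation*}
\min\!\left(\frac{|y-y'|}{(|x|+|x'|)^\beta},\; |y-y'|^{1/(\beta+1)}\right) \sim \frac{|y-y'|}{(|x|+|x'|)^\beta + |y-y'|^{\beta/(\beta+1)}}
\end{equation*}
yields the asserted upper bound. The lower bound follows either by testing the dual definition against $\phi_i(\xi)=x_i$ and a carefully chosen family $\psi_j(\xi) = y_j/F(x)$ with $F$ tuned so that $|\nabla_L\psi_j|\le 1$ produces the right denominator, or by arguing directly from $|\dot x|\le |\dot\gamma|_L$ and $|\dot y|\le |x|^\beta |\dot\gamma|_L$ along any horizontal curve realizing $d$.

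Once the distance is pinned down the volume is a Fubini computation:
\begin{equation*}
V(\xi,r) = \int_{\{|x-x'|\lesssim r\}} \bigl|\{y' : d((x',y'),\xi)<r\}\bigr|\, dx'.
\end{equation*}
In the regime $r\le |x|$, every $x'$ in the domain has $|x'|\sim |x|$, so the $y'$-slice has Lebesgue measure $\sim (r|x|^\beta)^m$, producing $V(\xi,r)\sim r^{n+m}|x|^{m\beta}$. In the regime $r\ge |x|$ the $x'$-ball contains the origin, the denominator in the distance formula is dominated by $|y-y'|^{\beta/(\beta+1)}$, and integration gives a $y'$-slice of measure $\sim r^{m(\beta+1)}$, so $V(\xi,r)\sim r^{\mathcal{Q}}$. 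For the doubling property, I would case-analyze whether $r$ and $sr$ sit in the same regime; in each regime the exponent of $r$ is at most $\mathcal{Q}$, and the two formulas agree up to constants at the transition $r\sim |x|$, so $V(\xi,sr)/V(\xi,r)\lesssim s^{\mathcal{Q}}$ uniformly in $\xi$ and $r>0$.

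The main obstacle is the distance lower bound. A direct construction of minimizing curves is delicate near the singular set $\{x=0\}$ where the horizontal vector fields $Y_j$ degenerate, and the two competing path strategies (stay at current radius versus swing outward) must be balanced at precisely the correct scale. The cleanest route is the dual one, producing explicit Lipschitz test functions whose $\nabla_L$-gradient is bounded and which separate $\xi$ and $\eta$ at the expected rate; once that step is in hand, the volume formula and the doubling bound reduce to bookkeeping.
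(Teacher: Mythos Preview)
The paper does not supply a proof of this lemma: it is quoted directly from \cite[Proposition~5.1]{RS} and stated without argument, so there is no proof in the paper to compare your proposal against. Your outline is essentially the standard route by which such results are established and is sound in structure: admissible paths (stay-at-radius versus swing-outward, optimized at $r_0\sim|y-y'|^{1/(\beta+1)}$) for the upper distance bound, Lipschitz test functions or direct curve analysis for the lower bound, a Fubini slice computation for the volume, and a regime-by-regime check for doubling. The only genuinely delicate point is, as you correctly flag, the lower bound on $d$ near $\{x=0\}$; the dual approach with $\phi_i=x_i$ and a suitably normalized function of $y_j$ and $|x|$ is indeed the cleanest way through, and once that is done the remaining claims are bookkeeping.
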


\begin{lemma}\cite[Theorem~6.4, Corollary~6.6]{RS}\label{le_RS}
Under the assumptions of Theorem~\ref{thm_ISO_Grushin}, the heat kernel of the Grushin operator $L$ satisfies 
\begin{align*}
    e^{-tL}(\xi,\eta) \le \frac{C}{V(\xi, \sqrt{t})} e^{-\frac{d(\xi,\eta)^2}{ct}}
\end{align*}
for all $t>0$ and almost all $\xi,\eta \in \mathbb{R}^{n+m}$.
\end{lemma}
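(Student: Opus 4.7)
The plan is to follow the classical two-step approach to Gaussian heat kernel upper bounds on a doubling space: first establish the pointwise on-diagonal upper bound $e^{-tL}(\xi,\xi)\lesssim V(\xi,\sqrt{t})^{-1}$, and then promote it to the full off-diagonal Gaussian bound via Grigor'yan's self-improvement theorem $\cite{grigor_heatkernel_Gaussian}$ (already cited in the excerpt). Volume doubling is available from Lemma~\ref{lemma_DS1}, so the heart of the argument is the on-diagonal estimate. Note that once one has pointwise on-diagonal + doubling, the Gaussian decay is automatic, and the stated Lemma~\ref{le_RS} follows.

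For the on-diagonal bound I would split according to the relative size of $\sqrt{t}$ and $|x|$. In the regime $\sqrt{t}\ge |x|$, I would invoke the Sobolev inequality $(\mathrm{S}_{\mathcal{Q}}^2)$ on the Grushin space (from $\cite[\text{Proposition 3.1}]{RS}$); the Varopoulos--Nash equivalence then yields $\|e^{-tL}\|_{1\to\infty}\lesssim t^{-\mathcal{Q}/2}$, which matches $V(\xi,\sqrt{t})^{-1}\sim t^{-\mathcal{Q}/2}$ by Lemma~\ref{lemma_DS1}. In the complementary regime $\sqrt{t}\le |x|$, I would localize to the non-degenerate ball around $\xi=(x,y)$ on which the coefficient $|x'|^{2\beta}$ is comparable to $|x|^{2\beta}$; after the linear change of variables $\tilde y = |x|^{\beta} y$ the operator $L$ becomes comparable to the Euclidean Laplacian on $\mathbb{R}^{n+m}$, whose on-diagonal heat kernel behaves like $t^{-(n+m)/2}$. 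Pulling back through the Jacobian $|x|^{-m\beta}$ gives $e^{-tL}(\xi,\xi)\lesssim t^{-(n+m)/2}|x|^{-m\beta}$, which is again $V(\xi,\sqrt{t})^{-1}$ in the small-time regime.

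With the pointwise on-diagonal bound in hand, Grigor'yan's theorem promotes it to the Gaussian off-diagonal estimate with respect to the intrinsic distance associated with the carré du champ $\Gamma$. It remains only to identify this intrinsic distance with the control distance $d$ defined via $D=\{\phi\in W^{1,\infty}: |\nabla_L\phi|\le 1\}$; but this is a standard fact, and it is exactly the distance appearing in Lemma~\ref{lemma_DS1}, so no new work is needed there.

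The main obstacle is the small-time regime $\sqrt{t}\le |x|$: the naive operator-comparison after the change of variables must be executed rigorously, and in general one needs control on the Dirichlet heat kernel on the localization ball, which is a scale-invariant Faber--Krahn inequality on Grushin balls. Checking that the constants are genuinely independent of $(\xi,t)$ and that the two regimes patch together cleanly at $\sqrt{t}\sim |x|$ is the main technical point; this scale-invariant analysis is precisely what is carried out in $\cite{RS}$.
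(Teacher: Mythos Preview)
The paper does not give its own proof of this lemma: the statement is simply quoted from \cite[Theorem~6.4, Corollary~6.6]{RS} and used as a black box. So there is no argument in the paper to compare your proposal against.

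That said, your outline is a standard and workable route to the result. The two-step scheme (pointwise on-diagonal bound plus doubling, then Grigor'yan's self-improvement \cite{grigor_heatkernel_Gaussian}) is exactly the mechanism the paper already invokes in the Riemannian part when it writes ``\eqref{Doubling} + \eqref{DUE} $\Rightarrow$ \eqref{UE}''. Your large-time regime is fine: $(\mathrm{S}_{\mathcal{Q}}^2)$ from \cite{RS} gives $\|e^{-tL}\|_{1\to\infty}\lesssim t^{-\mathcal{Q}/2}$, which matches $V(\xi,\sqrt t)^{-1}$ by Lemma~\ref{lemma_DS1}. The small-time regime is where the real work lies, and you correctly flag this: after rescaling $y\mapsto |x|^{\beta}y$ one does not get the Euclidean Laplacian but a uniformly elliptic operator with coefficients bounded above and below on the remote ball, so one needs either a local Faber--Krahn/Moser argument or a comparison of Dirichlet forms to conclude $e^{-tL}(\xi,\xi)\lesssim t^{-(n+m)/2}|x|^{-m\beta}$ with constants independent of $\xi$. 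This is exactly the scale-invariant local analysis carried out in \cite{RS}, so your sketch points in the right direction but defers the substantive step to the same reference the paper already cites.
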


In comparison with \cite{FL}, we establish the following properties.  
We use the notations $B_n(x,r)$ and $B_m(y,r)$ to denote the Euclidean balls in $\mathbb{R}^n$ and $\mathbb{R}^m$, respectively.

\begin{lemma}\label{lemma_G_volume}
There exists $c_1,c_2>0$ such that for $(x,y)\in \mathbb{R}^{n+m}$,
\begin{align}\label{eq_lemma_volume_2}
    B_n(x,c_1|x|) \times B_m\left(y,c_1|x|^{\beta+1}\right) \subset B\left((x,y), |x|\right) \subset B_n(x,c_2|x|) \times B_m\left(y,c_2|x|^{\beta+1}\right).
\end{align}
\end{lemma}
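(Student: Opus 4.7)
The plan is to deduce both inclusions directly from the quasi-distance formula
\[
    d((x,y),(x',y')) \sim |x-x'| + \frac{|y-y'|}{(|x|+|x'|)^{\beta} + |y-y'|^{\frac{\beta}{\beta+1}}}
\]
recorded in Lemma~\ref{lemma_DS1}. Throughout, I fix $\xi=(x,y)$ with $|x|>0$ and write $A \lesssim B$ for the implicit comparability constants coming from that lemma; the constants $c_1, c_2$ will be produced by tracking these constants.

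For the outer inclusion, suppose $\eta=(x',y')\in B(\xi,|x|)$. The first term in the distance formula forces $|x-x'|\le c_2'|x|$ for some $c_2'$, which settles the $x$-component and also gives $|x|+|x'|\le (2+c_2')|x|$. For the $y$-component I will split into two cases depending on which of the two terms in the denominator dominates: if $(|x|+|x'|)^\beta \ge |y-y'|^{\beta/(\beta+1)}$, then the second term of the distance is at least $\tfrac12 |y-y'|/(|x|+|x'|)^\beta$, from which $|y-y'| \lesssim |x|\cdot|x|^\beta = |x|^{\beta+1}$; otherwise the same term is at least $\tfrac12 |y-y'|^{1/(\beta+1)}$, yielding $|y-y'|^{1/(\beta+1)}\lesssim |x|$, i.e.\ again $|y-y'|\lesssim |x|^{\beta+1}$. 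Choosing $c_2$ as the maximum of these constants delivers the right-hand containment.

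For the inner inclusion, assume $x'\in B_n(x,c_1|x|)$ and $y'\in B_m(y, c_1|x|^{\beta+1})$ with $c_1\in(0,1/2)$ to be chosen small. Then $|x-x'|<c_1|x|$ is already small, and since $|x'|\ge (1-c_1)|x|\ge |x|/2$, one has $(|x|+|x'|)^\beta \ge |x|^\beta$; consequently
\[
    \frac{|y-y'|}{(|x|+|x'|)^{\beta}+|y-y'|^{\beta/(\beta+1)}} \le \frac{c_1|x|^{\beta+1}}{|x|^\beta} = c_1 |x|.
\]
Combining the two estimates and using the quasi-isometry constant from Lemma~\ref{lemma_DS1}, one obtains $d(\xi,\eta)\le C c_1 |x|$; fixing $c_1$ so that $Cc_1<1$ completes the left-hand containment.

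The only real subtlety is the case split in the outer inclusion, since the two regimes of the denominator correspond to the two scaling behaviours of the Grushin metric (Euclidean in $y$ at small scales, homogeneous $\beta+1$ scaling at large scales). Apart from this dichotomy the argument is purely algebraic, and no additional input beyond Lemma~\ref{lemma_DS1} is needed.
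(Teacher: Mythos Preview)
Your proposal is correct and follows essentially the same route as the paper: both inclusions are read off directly from the distance formula of Lemma~\ref{lemma_DS1}, with the outer inclusion handled by the same dichotomy on which term dominates the denominator. Your inner-inclusion argument is in fact slightly cleaner than the paper's, since by simply dropping the $|y-y'|^{\beta/(\beta+1)}$ term from the denominator you avoid the case split the paper performs there; otherwise the two arguments are identical.
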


\begin{proof}[Proof of Lemma~\ref{lemma_G_volume}]
We begin with the forward direction of \eqref{eq_lemma_volume_2}, i.e. 
\begin{align}\label{eq_lemma_v1}
    B_n\left(x,c_1|x|\right) \times B_m\left(y,c_1 |x|^{\beta+1}\right) \subset B\left((x,y), |x|\right).
\end{align}
Let $(x',y')\in B_n\left(x,c_1|x|\right) \times B_m\left(y,c_1 |x|^{\beta+1}\right)$. Then $|x-x'|\le c_1|x|$ and $|y-y'|\le c_1 |x|^{\beta+1}$. Next, by Lemma~\ref{lemma_DS1}, 
\begin{align*}
    d\left((x,y), (x',y')\right) \sim |x-x'| + \frac{|y-y'|}{\left(|x|+|x'|\right)^{\beta} + |y-y'|^{\frac{\beta}{\beta+1}}}.
\end{align*}
Therefore, it suffices to show that the second term of the RHS above is bounded by $c_2|x|$ for some $c_2>0$. Note that
\begin{align}\label{eq_lemma0}
    \frac{|y-y'|}{\left(|x|+|x'|\right)^{\beta} + |y-y'|^{\frac{\beta}{\beta+1}}} \sim \begin{cases}
        \left(|x|+|x'|\right)^{-\beta} |y-y'|, & |y-y'|\le \left(|x|+|x'|\right)^{\beta+1},\\
        |y-y'|^{\frac{1}{\beta+1}}, & |y-y'|\ge \left(|x|+|x'|\right)^{\beta+1}.
    \end{cases}
\end{align}
Now, if $|y-y'|\ge \left(|x|+|x'|\right)^{\beta+1}$, then the LHS of \eqref{eq_lemma0} is bounded by $|y-y'|^{\frac{1}{\beta+1}}\le  c_1^{\frac{1}{\beta+1}}|x|$. On the other hand, if $|y-y'|\le \left(|x|+|x'|\right)^{\beta+1}$, then the LHS of \eqref{eq_lemma0} is bounded by 
\begin{align*}
    \left(|x|+|x'|\right)^{-\beta} |y-y'| \lesssim |x|+|x'| \lesssim |x|
\end{align*}
as desired. This completes the forward direction of \eqref{eq_lemma_volume_2}.

For the reverse direction of \eqref{eq_lemma_volume_2}, i.e. 
\begin{align*}
    B\left((x,y), |x|\right) \subset B_n(x,c_2|x|) \times B_m\left(y,c_2|x|^{\beta+1}\right).
\end{align*}
Let $(x',y')\in B\left((x,y), |x|\right)$. By Lemma~\ref{lemma_DS1}, the RHS of \eqref{eq_lemma_v1} is bounded by $|x|$. Note that $x'\in B_n\left(x,c|x|\right)$ is clear. To this end, one argues as before, if $|y-y'|\ge \left(|x|+|x'|\right)^{\beta+1}$, then
\begin{align*}
    \eqref{eq_lemma0} \lesssim |x| \iff |y-y'|^{\frac{1}{\beta+1}} \lesssim |x| \iff y'\in B_m\left((y,c|x|^{\beta+1}\right).
\end{align*}
While if $|y-y'|\le \left(|x|+|x'|\right)^{\beta+1}$, one deduces
\begin{align*}
    \eqref{eq_lemma0} \lesssim |x| \iff \left(|x|+|x'|\right)^{-\beta} |y-y'| \lesssim |x| \iff |y-y'| \lesssim |x| \left(|x|+|x'|\right)^{\beta} \lesssim |x|^{\beta+1}
\end{align*}
since $|x'|\lesssim |x|$. 
\end{proof}

\begin{definition}\label{def_x_remote}
We say that a ball $B(\xi, r)$ is \emph{$x$-remote} if 
\begin{align*}
    r \le c\,\frac{|x|}{2},
\end{align*}
where $\xi = (x,y) \in \mathbb{R}^n \times \mathbb{R}^m$, and $c > 0$ is a small constant arising from \eqref{eq_lemma_volume_2}.  
This constant guarantees that for all $\eta = (x',y') \in B(\xi, r)$, we have $|x' - x| \le \frac{|x|}{2}$, and hence $|x'| \ge \frac{|x|}{2}$.
\end{definition}

Next, we estimate the kernel of the gradient of the heat semigroup.  
The argument is standard and closely follows \cite[Sections~3.2–3.3]{Gilles}.

\begin{lemma}\label{le_Grushin_gradient}
Under the assumptions of Theorem~\ref{thm_ISO_Grushin}, the following gradient estimate holds:
\begin{align*}
    \left|\nabla_{L}e^{-tL}(\xi,\eta)\right|\le \left(\frac{1}{\sqrt{t}} + \frac{1}{|x|}\right) \frac{C}{V(\xi,\sqrt{t})}e^{-\frac{d(\xi,\eta)^2}{ct}},
\end{align*}
where $\xi = (x,y) \in \mathbb{R}^n \setminus \{0\} \times \mathbb{R}^m$.
\end{lemma}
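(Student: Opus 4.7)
The plan is to derive this gradient bound from the curvature-dimension inequality of Lemma~\ref{le_Ric_Grushin} via a localized Li--Yau estimate on $x$-remote balls, closely following the strategy outlined in \cite[Section~3.2, 3.3]{Gilles}. The key observation is that on any $x$-remote ball $B(\xi,r)$ in the sense of Definition~\ref{def_x_remote}, every point $\xi'=(x',y')$ satisfies $|x'|\sim |x|$, so Lemma~\ref{le_Ric_Grushin} reduces on that ball to an honest constant lower bound of the form $\textrm{CD}(-c/|x|^2,N)$ for some $c,N$ independent of the base point. This places us, up to localization, in the standard Bakry--Emery setting on a length scale $\sim |x|$, where the classical heat-semigroup machinery applies verbatim.

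Fix $\eta\in\mathbb{R}^{n+m}$ and set $u_t(\xi)=e^{-tL}(\xi,\eta)$; then $(\partial_t+L)u_t=0$ on $\mathbb{R}^n\setminus\{0\}\times\mathbb{R}^m$, where $L$ is smooth and uniformly elliptic. I would first handle the short-time regime $\sqrt{t}\le c|x|/4$. Choosing $r\sim c|x|/2$, so that $B(\xi,r)$ is $x$-remote and $\sqrt{t}\ll r$, the localized Li--Yau maximum-principle argument on this ball (with a smooth cut-off compactly supported in $B(\xi,r)$, and Bochner's identity replaced by the CD inequality of Lemma~\ref{le_Ric_Grushin}) gives the pointwise Harnack-type bound
\begin{align*}
\frac{|\nabla_L u_t|^2}{u_t^{2}}(\xi)\;\lesssim\;\frac{1}{t}+\frac{1}{|x|^{2}}+\frac{1}{r^{2}}\;\lesssim\;\frac{1}{t}+\frac{1}{|x|^{2}}.
\end{align*}
Taking square roots and multiplying by the Gaussian upper bound for $u_t(\xi)$ supplied by Lemma~\ref{le_RS} yields the claimed gradient estimate in this range of $t$.

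For the long-time regime $\sqrt{t}\ge c|x|/4$, the term $1/\sqrt{t}$ is dominated by $1/|x|$, and I would pass to the semigroup identity. Setting $s=(c|x|/4)^{2}$, so that $\sqrt{s}$ falls into the previous case, one has
\begin{align*}
\nabla_L e^{-tL}(\xi,\eta)\;=\;\int_{\mathbb{R}^{n+m}} \nabla_L e^{-sL}(\xi,\zeta)\,e^{-(t-s)L}(\zeta,\eta)\,d\zeta,
\end{align*}
with $\nabla_L$ acting in the $\xi$ variable. The first factor has just been controlled by $C|x|^{-1}V(\xi,\sqrt{s})^{-1}\exp(-d(\xi,\zeta)^{2}/cs)$, and the second factor is estimated by Lemma~\ref{le_RS}. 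A routine heat-kernel convolution argument (relying on the volume doubling of Lemma~\ref{lemma_DS1} and an elementary Gaussian-volume bound such as \cite[Lemma~2.1]{CD2}) then folds these into $V(\xi,\sqrt{t})^{-1}\exp(-d(\xi,\eta)^{2}/c't)$, producing the desired bound with the $|x|^{-1}$ prefactor.

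The main technical obstacle I anticipate is implementing the localized Li--Yau maximum principle rigorously despite the singularity of $L$ along $\{x=0\}$. Although $L$ is smooth away from this hyperplane and the chosen cut-off stays uniformly within $\{x\ne 0\}$ (an automatic consequence of the $x$-remote condition), one must verify carefully that the boundary terms in the Bochner-type computation are controlled by Lemma~\ref{le_Ric_Grushin} with constants uniform in $\xi$, and that the non-compactness of $\mathbb{R}^m$ does not obstruct the maximum-principle step. Once this localization is checked, both pieces fit exactly into the template of \cite[Section~3.2, 3.3]{Gilles}, and the remaining convolution computations are entirely standard.
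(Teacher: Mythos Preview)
Your overall strategy---localize on $x$-remote balls, invoke the $\textrm{CD}(-c/|x|^2,N)$ inequality from Lemma~\ref{le_Ric_Grushin}, and run a local Li--Yau argument---is exactly the paper's. However, the displayed bound
\begin{align*}
\frac{|\nabla_L u_t|^2}{u_t^{2}}\;\lesssim\;\frac{1}{t}+\frac{1}{|x|^{2}}
\end{align*}
is not what the localized Li--Yau maximum principle actually yields. What one obtains (see e.g.\ \cite[Theorem~1.4]{ZZ}, \cite[Theorem~5.1]{XDL}) is
\begin{align*}
\frac{|\nabla_L u_t|^2}{u_t^{2}}-\alpha\,\frac{\partial_t u_t}{u_t}\;\le\;C\Big(\frac{1}{t}+\frac{1}{r^{2}}\Big),
\end{align*}
and the term $\alpha\,\partial_t u_t/u_t$ cannot simply be dropped. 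The paper closes this gap by bringing in a separate Gaussian bound on the \emph{time derivative} of the heat kernel, namely $|\partial_t e^{-tL}(\xi,\eta)|\le C\,t^{-1}V(\xi,\sqrt t)^{-1}e^{-d(\xi,\eta)^2/ct}$ from \cite[Theorem~2.6]{Sturm3}. Multiplying the Li--Yau inequality through by $u_t^{2}$ and bounding both $u_t$ (via Lemma~\ref{le_RS}) and $u_t\,|\partial_t u_t|$ by Gaussians gives the gradient estimate directly, for all $t>0$ at once.

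In particular, once the $\partial_t u_t$ term is handled this way, your short/long-time dichotomy and the semigroup convolution for large $t$ become unnecessary: since $r\sim|x|$, the single inequality already produces the prefactor $(t^{-1/2}+|x|^{-1})$ uniformly in $t$. Your convolution route would also work once the short-time step is repaired, but it is a detour. Your worries about the singularity along $\{x=0\}$ and the non-compact $\mathbb{R}^m$ factor are reasonable but are absorbed by the cited local Li--Yau results, which require only the CD inequality on the chosen $x$-remote ball, where $L$ is smooth and uniformly elliptic.
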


\begin{proof}[Proof of Lemma~\ref{le_Grushin_gradient}]
Let $\xi = (x,y) \in \mathbb{R}^n \setminus \{0\} \times \mathbb{R}^m$. Let $B = B(\xi,r)$ be an $x$-remote ball with radius $r = c|x|/2$ for some small but fixed $c>0$ as defined in Definition~\ref{def_x_remote}. Since for any $\xi' \in B$, it has 'Ricci' lower bound $-c/r^2$, i.e., $\textrm{CD}(-c_1/r^2, c_2)$ condition holds in $B$. Therefore, by \cite[Theorem~1.4]{ZZ}; see also \cite[Theorem~5.1]{XDL}, a local version Li-Yau type estimate indicates that for $\xi' \in B/2$, $\alpha >1$, and any positive $u_t$ such that $\partial_t u_t = - Lu_t$,
\begin{align}\label{le_grushin_grad1}
    \frac{|\nabla_L u_t|^2}{u_t^2} - \alpha \frac{\partial_t u_t}{u_t} \le C_{\alpha,n,m} \left(\frac{1}{t} + \frac{1}{r^2} \right).
\end{align}
Next, \cite[Theorem~2.6]{Sturm3} (for example) guarantees the following estimate for the time derivative of the heat kernel:
\begin{align}\label{le_grushin_grad2}
    \left| \partial_t e^{-tL}(\xi,\eta)\right| \le \frac{C}{t V(\xi,\sqrt{t})} e^{-\frac{d(\xi,\eta)^2}{ct}}.
\end{align}
Combining \eqref{le_grushin_grad1}, \eqref{le_grushin_grad2} and Lemma~\ref{le_RS}, we conclude the proof.

\end{proof}

\subsection{Hardy's inequality}
To apply our method, the next step is to establish the following Hardy's inequality.
\begin{lemma}\label{le_grushin_hardy}
Let $\nabla_{L}$ be the gradient operator defined by \eqref{Grushin_gradient}. Let $n\ge 2$, $m\ge 1$ and $\beta \ge 0$. Then, the following Hardy type inequality holds
\begin{align}
    \int_{\mathbb{R}^{n+m}} \left(\frac{u(\xi)}{|x|}\right)^p d\xi \lesssim \int_{\mathbb{R}^{n+m}} |\nabla_{L}u(\xi)|^p d\xi,\quad 1\le p<n
\end{align}
for all $u\in C_c^\infty(\mathbb{R}^{n+m})$.
\end{lemma}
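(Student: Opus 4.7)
The plan is to reduce the claimed inequality to the classical Hardy inequality in $\mathbb{R}^{n}$ by a slicing argument. The weight $|x|^{-p}$ on the left-hand side depends only on the $x$-variable, and on the right-hand side we have the pointwise bound
\begin{equation*}
|\nabla_x u(\xi)|^p \le |\nabla_L u(\xi)|^p = \bigl(|\nabla_x u(\xi)|^2 + |x|^{2\beta}|\nabla_y u(\xi)|^2\bigr)^{p/2},
\end{equation*}
so the anisotropic weight $|x|^\beta$ in front of $\nabla_y$ is irrelevant for this estimate. In particular, it suffices to prove
\begin{equation*}
\int_{\mathbb{R}^{n+m}} \left(\frac{u(\xi)}{|x|}\right)^p d\xi \;\lesssim\; \int_{\mathbb{R}^{n+m}} |\nabla_x u(\xi)|^p \, d\xi,
\end{equation*}
after which the lemma follows from the trivial inequality $|\nabla_x u|\le |\nabla_L u|$.

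To establish this, I would fix $y\in \mathbb{R}^m$ and consider the function $v_y(x):=u(x,y)$, which by hypothesis lies in $C_c^{\infty}(\mathbb{R}^n)$ (or at least in $W^{1,p}(\mathbb{R}^n)$ after a standard approximation argument). The classical Euclidean Hardy inequality in $\mathbb{R}^n$ states that, for $n\ge 2$ and $1\le p < n$,
\begin{equation*}
\int_{\mathbb{R}^n} \left(\frac{v_y(x)}{|x|}\right)^p dx \;\le\; C(n,p) \int_{\mathbb{R}^n} |\nabla_x v_y(x)|^p\, dx.
\end{equation*}
Integrating this inequality in the variable $y\in\mathbb{R}^m$ and applying Fubini's theorem to both sides yields the desired estimate. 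The hypothesis $n\ge 2$ is exactly what the classical Hardy inequality requires so that the weight $|x|^{-p}$ is locally integrable away from the singular set $\{x=0\}$ in $\mathbb{R}^n$ and so that $p=1$ is allowed.

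There is essentially no serious obstacle here: the argument is a one-line slicing reduction once one notices that the Hardy weight is independent of $y$ and that the $y$-component of $\nabla_L u$ can be discarded. The only mild technical point worth remarking on is that the lemma requires $u\in C_c^{\infty}(\mathbb{R}^{n+m})$, which need not be supported away from $\{x=0\}$; this is handled without issue because the classical Hardy inequality on $\mathbb{R}^n$ is valid for compactly supported smooth functions (and more generally by a density argument), regardless of whether their support meets the origin.
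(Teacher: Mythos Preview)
Your proof is correct and follows essentially the same slicing idea as the paper: both reduce to the classical Hardy inequality in $\mathbb{R}^n$ by freezing $y$ and integrating. The only minor difference is that the paper first reduces to $p=1$ via a self-improvement result (citing \cite{ambrosio}) and then gives a self-contained polar-coordinate proof of the $p=1$ case in $\mathbb{R}^n$, whereas you invoke the classical Hardy inequality in $\mathbb{R}^n$ for the full range $1\le p<n$ directly; your route is slightly more streamlined.
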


\begin{proof}[Proof of Lemma~\ref{le_grushin_hardy}]
By \cite{ambrosio}, it is enough to treat the case, where $p=1$. We use the following standard method. By using polar coordinates $x = r \theta \in (0,\infty) \times \mathbb{S}^{n-1}$, we obtain
\begin{align*}
\int_{\mathbb{R}^{n+m}} \frac{|u(\xi)|}{|x|} d\xi \sim \int_{\mathbb{R}^m} \int_{\mathbb{S}^{n-1}} \int_0^\infty |u(r,\theta,y)| r^{n-2} dr d\theta dy.
\end{align*}
Set $v(r):= u(r,\theta,y)$. Then
\begin{align*}
    |v(r)| = \left| - \int_r^\infty  v'(s) ds  \right| \le \int_r^\infty |v'(s)| ds.
\end{align*}
Hence,
\begin{align*}
\int_{\mathbb{S}^{n-1}} \int_0^\infty |u(r,\theta,y)| r^{n-2} dr d\theta &\le \int_{\mathbb{S}^{n-1}} \int_0^\infty \int_r^\infty |\partial_s u(s,\theta,y)| ds r^{n-2} dr d\theta\\
&= \int_{\mathbb{S}^{n-1}} \int_0^\infty |\partial_s u(s,\theta,y)| \int_0^s r^{n-2} dr ds d\theta\\
&\lesssim \int_{\mathbb{S}^{n-1}} \int_0^\infty |\nabla_{L}u(s,\theta,y)| s^{n-1} ds d\theta \\
&\sim \int_{\mathbb{R}^n} |\nabla_{L}u(x,y)| dx.
\end{align*}
The result follows directly.
\end{proof}

Next, we proceed to prove Theorem~\ref{thm_ISO_Grushin}.  
The idea is, of course, based on the observation made for \eqref{QD} manifolds (see Section~\ref{sec4}).  
Note that on the Grushin space, the 'curvature' decays only with respect to the $x$-variable.  
Hence, we apply Lemma~\ref{lemma_remoteball} to $\mathbb{R}^n$ and follow the same strategy as in Section~\ref{sec4}.

\subsection{Proof of Theorem~\ref{thm_ISO_Grushin}}
In what follows, we write $\xi = (x,y)$ and $\eta = (x',y')$, where $x,x'\in \mathbb{R}^n$ and $y,y'\in \mathbb{R}^m$. Note that by the self-improving property of Sobolev inequalities, it is enough to prove $\left(S_{\mathcal{Q}}^1\right)$.

Let $\lambda>0$, $q=\frac{\mathcal{Q}}{\mathcal{Q}-1}$ and $f\in C_c^\infty(\mathbb{R}^{n+m})$ be fixed. Set $R = \left(\frac{\|f\|_q}{\lambda}\right)^{q/\mathcal{Q}}$. By Lemma~\ref{lemma_remoteball}, there exists a sequence of balls $\{B_n^{\alpha}=B_n(x_\alpha,r_\alpha)\}_{\alpha \ge 0}$ such that
\begin{align*}
    \mathbb{R}^n = B_n^0 \cup \left(\cup_{\alpha \ge 1}B_n^{\alpha} \right).
\end{align*}
Note that by our construction, each $B_n^{\alpha}$ ($\alpha \ne0$) is remote. Now, let $\{\mathcal{X}_\alpha\}$ be a smooth partition of unity subordinated to $B_n^{\alpha}$ such that $0\le \mathcal{X}_\alpha \le 1$ and
\begin{align*}
    &(1) \sum_{\alpha\ge 1} \mathcal{X}_{\alpha} + \mathcal{X}_{0} = 1, \quad &&(2)  \|\mathcal{X}_\alpha\|_\infty +  r_\alpha \|\nabla \mathcal{X}_\alpha\|_\infty \lesssim 1,\quad \forall \alpha\ne 0,\\
    &(3) \textrm{supp}(\mathcal{X}_\alpha) \subset B_n^{\alpha}, \quad && (4) \textrm{supp}(\mathcal{X}_{0}) \subset B_n^{0}=B_n(0,R). 
\end{align*}
By the decomposition, we write
\begin{equation*}
    f(x,y) = \sum_{\alpha\ge 1} f(x,y) \mathcal{X}_\alpha(x) + f(x,y)\mathcal{X}_{0}(x):= \sum_{\alpha\ge 1} f_\alpha + f_{0}.
\end{equation*}
Now, since the decomposition has finite overlap (and the finite overlap constant does not depend on $R$), one infers
\begin{align*}
    \textrm{vol} \left(\{\mathbb{R}^{n+m}; |f|\ge \lambda\}\right) &\lesssim \sum_{\alpha\ge 1} \textrm{vol} \left(\{B_n^{\alpha} \times \mathbb{R}^m; |f_\alpha|\ge c_0^{-1}\lambda\}\right)\\
    &+ \textrm{vol} \left(\{B_{0}\times \mathbb{R}^m; |f_{0}|\ge c_0^{-1}\lambda\}\right):= \sum_{\alpha\ge 1} J_\alpha + J_{0}.
\end{align*}
For each $\alpha\ge 1$, we further split for some $t>0$,
\begin{equation*}
    f_\alpha = \left( f_\alpha - e^{-tL}f_\alpha \right) + e^{-tL}f_\alpha.
\end{equation*}
Then, it follows by \cite[Prop~3.1]{DS2} that $\|e^{-tL}\|_{1\to \infty}\lesssim t^{-\mathcal{Q}/2}$ and hence by interpolation
\begin{align*}
   \|e^{-tL}f_\alpha\|_\infty \le C t^{-\frac{\mathcal{Q}}{2q}} \|f\|_q. 
\end{align*}
Setting $t = \left(2c_0 C \|f\|_q/\lambda\right)^{2q/\mathcal{Q}}$, one deduces $\textrm{vol} \left(\{|e^{-tL}f_\alpha|\ge (2c_0)^{-1}\lambda\}\right)= 0$. Therefore,
\begin{align*}
    J_\alpha \lesssim \textrm{vol}\left(\left\{B_n^{\alpha} \times \mathbb{R}^m; |f_\alpha - e^{-tL}f_\alpha| \ge (2c_0)^{-1}\lambda  \right\}\right).
\end{align*}
Observe that $f_\alpha - e^{tL}f_\alpha = \int_0^t L e^{-sL}f_\alpha ds$. Then, by Minkowski's integral inequality, we deduce
\begin{align}\label{eq_I_alpha2}
    J_\alpha \lesssim \lambda^{-1} \int_0^t \|L e^{-sL} f_\alpha \|_{L^1(B_n^{\alpha} \times \mathbb{R}^m)} ds.
\end{align}
Let $g\in L^\infty$ with support in $B_n^{\alpha} \times \mathbb{R}^m$ and $\|g\|_{\infty} \le 1$. Integrating by parts yields that
\begin{align}\label{eq_dual2}
    \int_{\mathbb{R}^m}\int_{\mathbb{R}^n} L e^{-sL} f_\alpha(x,y) g(x,y) dx dy &= \int_{\mathbb{R}^m}\int_{B_n^{\alpha}} \nabla_L f_\alpha(x,y) \cdot \nabla_L e^{-sL}g(x,y) dx dy\\ \nonumber
    &\le \left\| \nabla_L f_\alpha \right\|_{L^1(\mathbb{R}^n\times \mathbb{R}^m)} \left\| \nabla_L e^{-tL}g\right\|_{L^\infty(B_n^{\alpha}\times \mathbb{R}^m)}.
\end{align}

\begin{lemma}\label{ISO_Grushin_Lemma1}
Under the assumptions of Theorem~\ref{thm_ISO_Grushin}, the following estimate holds:
\begin{align*}
    \sup_{\alpha \ge 1} \sup_{s>0} \|\sqrt{s} \nabla_L e^{-sL}g \|_{L^\infty(B_n^{\alpha}\times \mathbb{R}^m)} \lesssim  \|g\|_\infty
\end{align*}
for all $g\in L^\infty$ supported in $B_n^{\alpha} \times \mathbb{R}^m$.

\end{lemma}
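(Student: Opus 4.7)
The plan is to apply the pointwise heat-kernel gradient bound from Lemma~\ref{le_Grushin_gradient} and then integrate against $g$, splitting into two regimes according to whether the heat-time scale $\sqrt{s}$ is smaller or larger than the radius $r_\alpha$ of the remote $n$-ball $B_n^\alpha$. Fix $\xi = (x,y) \in B_n^\alpha \times \mathbb{R}^m$; since $B_n^\alpha$ is remote in $\mathbb{R}^n$ we have $|x| \simeq r_\alpha$ by the construction of Lemma~\ref{lemma_remoteball}. Integrating Lemma~\ref{le_Grushin_gradient} against $g$ gives
\begin{align*}
|\nabla_L e^{-sL}g(\xi)| \le \|g\|_\infty \left(\frac{1}{\sqrt{s}} + \frac{1}{r_\alpha}\right) \int_{B_n^\alpha \times \mathbb{R}^m} \frac{C}{V(\xi,\sqrt{s})} e^{-d(\xi,\eta)^2/cs}\,d\eta,
\end{align*}
so the whole problem reduces to bounding this last integral by something of order $\min(1, r_\alpha/\sqrt{s})$.

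In the first regime $\sqrt{s} \le r_\alpha$ the prefactor reduces to $\simeq 1/\sqrt{s}$, and the integral is controlled by the standard annular decomposition together with the doubling property from Lemma~\ref{lemma_DS1}:
\begin{align*}
\int_{\mathbb{R}^{n+m}} \frac{1}{V(\xi,\sqrt{s})} e^{-d(\xi,\eta)^2/cs}\,d\eta \lesssim \sum_{k\ge 0} 2^{k\mathcal{Q}}\, e^{-4^k/c} \lesssim 1,
\end{align*}
which immediately yields $\sqrt{s}\,|\nabla_L e^{-sL}g(\xi)| \lesssim \|g\|_\infty$ in this regime.

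The delicate step is the complementary regime $\sqrt{s} > r_\alpha$: here the prefactor is only controlled by $1/r_\alpha$, so I need the integral itself to produce an extra gain of order $r_\alpha/\sqrt{s}$. Using Lemma~\ref{lemma_DS1}, $V(\xi,\sqrt{s}) \sim s^{\mathcal{Q}/2}$, while the explicit distance formula
\begin{align*}
d(\xi,\eta) \sim |x-x'| + \frac{|y-y'|}{(|x|+|x'|)^\beta + |y-y'|^{\beta/(\beta+1)}}
\end{align*}
suggests splitting the $y'$-integral at $|y-y'| = r_\alpha^{\beta+1}$. Since $\sqrt{s}>r_\alpha$, the integral is dominated by the long-range regime $d \sim |y-y'|^{1/(\beta+1)}$, giving $\int_{\mathbb{R}^m} e^{-d^2/cs}\,dy' \lesssim s^{m(\beta+1)/2}$, while the $x'$-integration over $B_n^\alpha$ contributes at most its Euclidean volume $\sim r_\alpha^n$. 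Inserting these into the gradient bound and using $\mathcal{Q} = n+m(\beta+1)$ collapses the estimate to
\begin{align*}
|\nabla_L e^{-sL}g(\xi)| \lesssim \|g\|_\infty \cdot \frac{1}{r_\alpha}\cdot \frac{r_\alpha^n}{s^{n/2}} = \frac{\|g\|_\infty}{\sqrt{s}} \left(\frac{r_\alpha}{\sqrt{s}}\right)^{n-1} \le \frac{\|g\|_\infty}{\sqrt{s}},
\end{align*}
where the last inequality uses exactly the hypothesis $n\ge 2$ together with $r_\alpha<\sqrt{s}$.

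The main obstacle I anticipate is precisely this second regime: the support $B_n^\alpha \times \mathbb{R}^m$ is a cylinder rather than a Grushin ball, so the clean doubling argument of the first regime is wasteful when $\sqrt{s}>r_\alpha$, and one has to quantify the anisotropy carefully via Lemma~\ref{lemma_DS1} (and Lemma~\ref{lemma_G_volume}) to recover the missing $r_\alpha/\sqrt{s}$ factor. It is at this step that the hypothesis $n\ge 2$ really enters: the factor $(r_\alpha/\sqrt{s})^{n-1}$ is what absorbs the apparent loss coming from the weaker bound $1/|x|$ replacing $1/\sqrt{s}$ in the gradient kernel estimate of Lemma~\ref{le_Grushin_gradient}.
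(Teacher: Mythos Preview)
Your argument is correct and matches the paper's proof essentially line for line: the same pointwise gradient bound from Lemma~\ref{le_Grushin_gradient}, the same split at $\sqrt{s}\sim r_\alpha$, the doubling estimate for small time, and for large time the explicit distance/volume formulas from Lemma~\ref{lemma_DS1} to compute $\int_{\mathbb{R}^m} e^{-d^2/cs}\,dy'\lesssim s^{m(\beta+1)/2}$ followed by the trivial bound $|B_n^\alpha|\sim r_\alpha^n$. One small remark: the final inequality $(r_\alpha/\sqrt{s})^{n-1}\le 1$ actually only needs $n\ge 1$, not $n\ge 2$; the hypothesis $n\ge 2$ is required elsewhere (for the Hardy inequality in Lemma~\ref{le_grushin_hardy}), not at this step.
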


\begin{proof}[Proof of Lemma~\ref{ISO_Grushin_Lemma1}]
Let $x\in B_n^\alpha$ and $s>0$. By Lemma~\ref{le_Grushin_gradient},
\begin{equation*}
\left|\nabla_{L}e^{-tL}(\xi,\eta)\right|\le \left(\frac{1}{\sqrt{t}} + \frac{1}{|x|}\right) \frac{C}{V(\xi,\sqrt{t})}e^{-\frac{d(\xi,\eta)^2}{ct}}.
\end{equation*}
Since for all $x\in B_n^{\alpha}$, we have $2^8 r_\alpha \le |x| \le 2^{11} r_\alpha$ and thus
\begin{align*}
    \left|\nabla_L e^{-sL}g(\xi)\right| = \left|\nabla_L e^{-sL}g(x,y)\right| &\le \int_{\mathbb{R}^m} \int_{B_n^\alpha} \left(\frac{1}{\sqrt{s}} + \frac{1}{r_\alpha}\right) \frac{C}{V(\xi,\sqrt{s})} e^{-\frac{d(\xi,\eta)^2}{cs}} |g(\eta)| d\eta.
\end{align*}
Next, if $\sqrt{s}\le 2^{11}r_\alpha$, then by \cite[Lemma 2.1]{CD2}, we have
\begin{align*}
\left|\nabla_L e^{-sL}g(\xi)\right| &\le s^{-1/2} \|g\|_\infty \frac{C}{V(\xi,\sqrt{s})} \int_{\mathbb{R}^{n+m}} e^{-\frac{d(\xi,\eta)^2}{cs}}d\eta \le C s^{-1/2} \|g\|_\infty.
\end{align*}
While if $\sqrt{s}\ge 2^{11}r_\alpha \ge |x|$, one deduces
\begin{align}\label{eq_le_gradient1}
\left|\nabla_L e^{-sL}g(\xi)\right| &\le r_\alpha^{-1} \|g\|_\infty \frac{C}{V(\xi,\sqrt{s})} \int_{B_n^\alpha} \int_{\mathbb{R}^m} e^{-\frac{d(\xi,\eta)^2}{cs}} d y' dx'.
\end{align}
By Lemma~\ref{lemma_DS1}, since $\sqrt{s}\ge |x|$, we have $V(\xi,\sqrt{s}) = V((x,y), \sqrt{s}) \sim (\sqrt{s})^{\mathcal{Q}}$. Moreover, the distance estimate gives
\begin{align*}
    d((x,y);(x',y'))^2 \sim |x-x'|^2 + \frac{|y-y'|^2}{(|x|+|x'|)^{2\beta} + |y-y'|^{\frac{2\beta}{\beta+1}}}.
\end{align*}
Then, by using polar coordinates (set $\sigma = (|x|+|x'|)^{2\beta}$),
\begin{align*}
\int_{\mathbb{R}^m} e^{-\frac{d(\xi,\eta)^2}{cs}} d y' &\le C \int_0^\infty \textrm{exp}\left(\frac{-cr^2 s^{-1}}{\sigma + r^{2\beta/(\beta+1)} } \right) r^{m-1} dr\\
&\lesssim \int_0^{\sigma^{\frac{\beta+1}{2\beta}}} \textrm{exp}\left( - \frac{r^2}{s\sigma} \right) r^{m-1} dr + \int_{\sigma^{\frac{\beta+1}{2\beta}}}^\infty \textrm{exp}\left( -\frac{r^{\frac{2}{\beta+1}}}{s} \right) r^{m-1} dr\\
&\lesssim \max \left(s^{\frac{m}{2}} \sigma^{\frac{m}{2}}, s^{\frac{m(\beta+1)}{2}} \right) \lesssim s^{\frac{m(\beta+1)}{2}},
\end{align*}
where the last inequality follows by observing that $\sigma \sim |x|^{2\beta} \sim r_\alpha^{2\beta} \lesssim s^{\beta}$. Substitute this into \eqref{eq_le_gradient1} to get

\begin{align*}
\left|\nabla_L e^{-sL}g(\xi)\right| &\lesssim \|g\|_\infty r_\alpha^{-1} s^{-\frac{\mathcal{Q}}{2}} V_{\mathbb{R}^n}\left(B_n^\alpha \right) s^{\frac{m(\beta+1)}{2}}\\
&\sim \|g\|_\infty r_\alpha^{n-1} s^{\frac{m(\beta+1) - \mathcal{Q}}{2}}\\
&\lesssim s^{-1/2}\|g\|_\infty 
\end{align*}
as desired.

\end{proof}

By Lemma~\ref{ISO_Grushin_Lemma1}, \eqref{eq_dual2} and \eqref{eq_I_alpha2}, one obtains
\begin{align*}
    J_\alpha \lesssim \lambda^{-1} \|\nabla_L f_\alpha\|_1 t^{1/2} \sim \lambda^{-1} \|\nabla_L f_\alpha\|_1 \left(\frac{\|f\|_q}{\lambda}\right)^{\frac{q}{\mathcal{Q}}} = \lambda^{-1-q/\mathcal{Q}} \|\nabla_L f_\alpha\|_1 \|f\|_q^{q/\mathcal{Q}}.
\end{align*}
It then follows by finite overlap property and Lemma~\ref{le_grushin_hardy} that
\begin{align*}
\sum_{\alpha \ge 1} J_\alpha &\lesssim \lambda^{-1-q/\mathcal{Q}} \|f\|_q^{q/\mathcal{Q}} \left( \sum_{\alpha \ge 1} \|\nabla_L f\|_{L^1(B_n^\alpha \times \mathbb{R}^m)} +  \sum_{\alpha \ge 1} \left\|\frac{f}{r_\alpha}\right\|_{L^1(B_n^\alpha \times \mathbb{R}^m)} \right)\\
&\lesssim \lambda^{-1-q/\mathcal{Q}} \|f\|_q^{q/\mathcal{Q}} \left( \|\nabla_L f\|_{L^1(\mathbb{R}^{n+m})} + \int_{\mathbb{R}^m} \sum_{\alpha \ge 1} \int_{B_n^\alpha} \frac{|f(x,y)|}{|x|} dx dy \right)\\
&\lesssim \lambda^{-1-q/\mathcal{Q}} \|f\|_q^{q/\mathcal{Q}}\|\nabla_L f\|_1.
\end{align*}
To this end, one applies Lemma~\ref{le_grushin_hardy} one more time:
\begin{align*}
    J_0 &= \textrm{vol} \left(\{B_{0}\times \mathbb{R}^m; |f_{0}|\ge c_0^{-1}\lambda\}\right)\\
    &\lesssim \lambda^{-1} \| f \|_{L^1(B_n^0\times \mathbb{R}^m)} \lesssim \lambda^{-1} R \left\| 
\frac{f}{|x|} \right\|_{L^1(\mathbb{R}^n \times \mathbb{R}^m)}\\
&\lesssim \lambda^{-1-q/\mathcal{Q}} \|f\|_q^{q/\mathcal{Q}}\|\nabla_L f\|_1.
\end{align*}
Therefore, we conclude
\begin{align*}
    \sup_{\lambda>0} \lambda^{1+q/\mathcal{Q}}\textrm{vol} \left(\left\{ (x,y)\in \mathbb{R}^{n+m}; |f(x,y)|\ge \lambda \right\}\right) \lesssim \|\nabla_L f\|_1 \|f\|_q^{q/\mathcal{Q}},
\end{align*}
where the implicit constant is independent of $\lambda$ and $f$. The result follows easily from Proposition~\ref{le_BCLS}.

\appendix

\section{$\Gamma$-calculus}\label{appendix}

In the appendix, we check \eqref{CD1} explicitly. One thing needs to be careful is that by our definition, the Grushin operator is defined as $L = -\sum_i \partial_{x_i}^2 - |x|^{2\beta} \sum_j \partial_{y_j}^2 = \Delta_x + |x|^{2\beta}\Delta_y$, which is non-negative. Therefore, we apply formulas \eqref{carre1}, \eqref{carre2} to $\mathcal{L} = -L$. In what follows, we assume $1\le i,k\le n$, $1\le j,l\le m$.

Let $f\in C^{\infty}(\mathbb{R}^{n+m})$. Then
\begin{align*}
    &\Gamma(f) = \sum_i f_{x_i}^2 + |x|^{2\beta} \sum_j f_{y_j}^2 = |\nabla_L f|^2\\
    &\mathcal{L}(f)^2 = L(f)^2 = |\Delta_x f|^2 + |x|^{4\beta} |\Delta_y f|^2 + 2|x|^{2\beta} \Delta_x f \Delta_y f,
\end{align*}
which are clear.

Next, we compute $\Gamma_2(f) = \frac{1}{2} \left( \mathcal{L}\Gamma(f) - 2 \Gamma(f,\mathcal{L}f) \right)$.
\begin{align*}
    \mathcal{L}\Gamma(f) &= -L\left(\sum_i f_{x_i}^2 + |x|^{2\beta} \sum_j f_{y_j}^2\right)\\ \nonumber
    &= -\sum_i L\left( f_{x_i}^2 \right) - L \left( |x|^{2\beta} \sum_j f_{y_j}^2 \right)
\end{align*}
By Leibuniz's rule, one shows that
\begin{align}
    L(gh) = (Lg)h + g(Lh) - 2\nabla_L g \cdot \nabla_L h = (Lg)h + g(Lh) - 2\Gamma(g,h).
\end{align}
Therefore,
\begin{align*}
    L\left(f_{x_i}^2\right) = 2L \left(f_{x_i} \right) f_{x_i} -2 \Gamma(f_{x_i}),
\end{align*}
and
\begin{align*}
    L \left( |x|^{2\beta} \sum_j f_{y_j}^2 \right) &= L\left(|x|^{2\beta}\right) \sum_j f_{y_j}^2 + |x|^{2\beta} \sum_j L \left(f_{y_j}^2 \right) -2 \sum_j \Gamma \left(|x|^{2\beta}, f_{y_j}^2 \right)\\ \nonumber
    &= -2\beta (n+2\beta-2) |x|^{2\beta-2} \sum_j f_{y_j}^2 + 2|x|^{2\beta} \sum_j L(f_{y_j}) f_{y_j}\\ \nonumber
    &- 2 |x|^{2\beta} \sum_j \Gamma(f_{y_j}) -2 \sum_j \Gamma \left(|x|^{2\beta}, f_{y_j}^2 \right).
\end{align*}
Set $S_{xx} = \sum_{i,k=1}^n f_{x_i,x_k}^2$, $S_{xy} = \sum_{i=1}^n \sum_{j=1}^m f_{x_i,y_j}^2$ and $S_{yy} = \sum_{j,l=1}^m f_{y_j,y_l}^2$. One observes that
\begin{align*}
    \sum_i \Gamma(f_{x_i}) = \sum_{i,k} f_{x_i,x_k}^2 + |x|^{2\beta} \sum_i\sum_j f_{x_i,y_j}^2 = S_{xx} + |x|^{2\beta} S_{xy},
\end{align*}
and
\begin{align*}
    \sum_j \Gamma(f_{y_j}) = S_{xy} + |x|^{2\beta} S_{yy}.
\end{align*}
One concludes
\begin{align}\label{appendix1}
    \mathcal{L}\Gamma(f) &= 2 S_{xx} + 4 |x|^{2\beta} S_{xy} + 2|x|^{4\beta} S_{yy} - 2 \sum_i L(f_{x_i}) f_{x_i} - 2|x|^{2\beta} \sum_j L(f_{y_j}) f_{y_j}\\ \nonumber
    &+ 2\beta (n+2\beta-2) |x|^{2\beta-2} |\nabla_y f|^2 + 2 \sum_j \Gamma \left(|x|^{2\beta}, f_{y_j}^2 \right).
\end{align}
Next, 
\begin{align*}
    \Gamma(f,\mathcal{L}f) = -\Gamma(f,Lf) = - \nabla_L f \cdot \nabla_L \left( Lf \right) = - \sum_i f_{x_i} (Lf)_{x_i} - |x|^{2\beta} \sum_j f_{y_j} (Lf)_{y_j}.
\end{align*}
Note that
\begin{align*}
    (Lf)_{x_i} &= L(f_{x_i}) + 2\beta |x|^{2\beta-2} x_i \Delta_y f.\\
    (Lf)_{y_j} &= L(f_{y_j}).
\end{align*}
We deduce
\begin{align}
    \Gamma(f,\mathcal{L}f) = - \sum_i f_{x_i} L(f_{x_i}) - 2\beta |x|^{2\beta-2} \Delta_y f \sum_i x_i f_{x_i} - |x|^{2\beta} \sum_j f_{y_j} L(f_{y_j}).
\end{align}
Combine this with \eqref{appendix1} to get
\begin{align}\label{appendix2}
    \Gamma_2(f) &= \frac{1}{2} \mathcal{L}\Gamma(f) - \Gamma(f,\mathcal{L}f) \\ \nonumber
    &= S_{xx} + 2 |x|^{2\beta} S_{xy} + |x|^{4\beta}S_{yy} + \beta (n+2\beta-2) |x|^{2\beta-2} |\nabla_y f|^2 + 2\beta |x|^{2\beta-2} \Delta_y f \sum_i x_i f_{x_i}\\ \nonumber
    &+ \sum_j \nabla_L \left( |x|^{2\beta} \right) \cdot \nabla_L \left(f_{y_j}^2 \right)\\ \nonumber
    &= S_{xx} + 2 |x|^{2\beta} S_{xy} + |x|^{4\beta}S_{yy} + \beta (n+2\beta-2) |x|^{2\beta-2} |\nabla_y f|^2 + 2\beta |x|^{2\beta-2} \Delta_y f \sum_i x_i f_{x_i}\\ \nonumber
    &+ 4\beta |x|^{2\beta-2} \sum_j \sum_i x_i f_{y_j} f_{x_i,y_j}.
\end{align}

\bigskip
{\bf Acknowledgments.} 
Part of this article is contained in the author's Ph.D. thesis \cite{He_phdthesis}. He would like to thank his supervisor, Adam Sikora, for his patient guidance and consistent encouragement.

\bibliographystyle{abbrv}

\bibliography{references.bib}


\end{document}